\newtheorem{note}{Note}
\newtheorem{theo}{Theorem}
\newtheorem{defi}{Definition}
\newtheorem{prop}{Proposition}
\newtheorem{lemma}{Lemma}
\newtheorem{coro}{Corollary}
\newtheorem{remark}{Remark}
\newcommand{\bbN}{{\mathbb N}}
\newcommand{\bbR}{{\mathbb R}}
\newcommand{\al}{\alpha}
\newcommand{\p}{\partial}
\newcommand{\G}{\Gamma}
\newcommand{\rmd}{{\rm d}}
\newcommand{\eqq}[2]{\begin{equation}  #1  \label{#2} \end{equation}    }
\newcommand*{\norm}[1]{\left\Vert{#1}\right\Vert}
\newcommand*{\abs}[1]{\left\vert{#1}\right\vert}
\newcommand*{\ca}{c_{\alpha}}
\newcommand*{\izj}{\int_{0}^{1}}
\newcommand*{\poch}{\frac{\p}{\p x}}
\newcommand*{\uz}{u_{0}}
\newcommand*{\ia}{I^{\alpha}}
\newcommand*{\vf}{\varphi}
\newcommand*{\ve}{\varepsilon}
\newcommand{\hd}{\hspace{0.2cm}}
\newcommand{\no}{\noindent}
\newcommand{\m}[1]{\mbox{#1}}
\newcommand{\da}{D^{\alpha}_x}
\newcommand{\um}{u^{m}}
\newcommand{\sm}{s^{m}}
\newcommand{\izt}{\int_{0}^{t}}
\newcommand{\dd}{\mathcal{D}_{\al}}
\newcommand{\ld}{L^{2}(0,1)}
\begin{document}

\begin{center}\emph{}
\LARGE
\textbf{ A one-phase space - fractional Stefan problem with no  liquid initial domain}
\end{center}
 \normalsize
  \begin{center}
  Sabrina Roscani$^{2,3}$, Katarzyna Ryszewska$^1$ and Lucas Venturato$^{2,3}$\\
  $^1$ Department of Mathematics and Information Sciences, Warsaw University of Technology, Koszykowa 75, 00-662 Warsaw, Poland\\
 $^2$ Departamento de Matem\'atica, FCE, Universidad Austral, Paraguay 1950, S2000FZF Rosario, Argentina \\
 $^3$CONICET, Argentina\\
 (sroscani@austral.edu.ar, K.Ryszewska@mini.pw.edu.pl, lventurato@austral.edu.ar)
                   \vspace{0.2cm}

       \end{center}

\small

\textbf{Abstract:} Taking into account the recent works \cite{RoTaVe:2020}  and  \cite{Rys:2020}, we consider a phase-change problem for a one dimensional material with a non-local flux, expressed in terms of the Caputo derivative, which derives in a space-fractional Stefan problem. We prove existence of a unique solution to a phase-change problem with the fractional Neumann boundary condition at the fixed face $x=0$, where the domain, at the initial time, consists of liquid and solid. Then we use this result to prove the  existence of a limit  solution to an   analogous problem with solid initial domain, when it is not possible to transform the domain into a cylinder.\\
  
\noindent \textbf{Keywords:} Space fractional diffusion equation, Stefan problem, Moving boundary problem, Caputo derivative.  \\

\noindent \textbf{MSC2010:} 26A33 - 35R11 -  35R35 - 35R37. \\ 
\normalsize
\section{Introduction}
This paper is devoted to the analysis of the space-fractional, one-phase, one-dimensional Stefan problem, where in the initial time the domain consists only of a solid phase.
The space-fractional Stefan problems are the free boundary problems (FBP) governed by fractional diffusion equations with non-locality in space. Although in the current literature there is a great variety of fractional models \cite{Hilfer}, according to the different approaches that can be considered in this wide field, in this article we will  focus on the problem proposed by Prof. Voller in  \cite{Vo2011} (see also \cite{GaFaVoPa2020}) where the non-local flux is modeled by the Caputo derivative. 

The idea is the following. Suppose that we are modelling a melting phenomena related to a phase change in an infinite slab, due to heat transfer. We discuss a  one-phase model, so we assume that the solid phase is at a constant melting temperature equal to zero and we are interested only in the liquid phase. We suppose also that all the physical parameters involved are constants  equal to one.  
 
 Let $u=u(x,t)$ and $q=q(x,t)$ be the temperature and the flux at every position $x$ and time $t$. If a relation for these two functions is given by the Fourier's law, which states that the flux is proportional to the gradient of temperature, we have 
\begin{equation}\label{FourierLaw} q_l(x,t)=- u_x(x,t), \end{equation}
where the subscript  $l$ refers to the local character.
However, relation \eqref{FourierLaw} is an empiric relation which  can be replaced, as it was exposed, for example, by Gurtin and Pipkin in 1968 \cite{GuPi:1968} for memory fluxes, or more recently for  non-local fluxes  by Atanackovi\'c et. al. in  \cite{AKO:2012}. We consider here a non-local flux law proposed in \cite{Vo2011}  given by the following relation \begin{equation}\label{q-elegido-0}
 q_{nl}(x,t)=-\frac{1}{\G(1-\al)} \int_0^x \,u_x(p,t)(x-p)^{-\al} \rm{dp}.
\end{equation}
It says that  the flux at every time $t$ and position $x$ is a generalized sum of all the local fluxes at every position between the left extreme of the slab ($x=0$) and the current position, where the ``nearest'' local fluxes  have more relevance than the ``farthest'' ones.

The relation \eqref{q-elegido-0} can be expressed in terms of fractional derivatives (see Definition 2) by 
\begin{equation}\label{q-elegido}
 q_{nl}(x,t)=-  \,D_x^{\al} u(x,t).
\end{equation}

Let us recall the  balance equation which follows from the first principle of thermodynamics
\begin{equation}\label{cont eq}
\frac{\p u}{\p t}(x,t)=-\frac{\p q}{\p x}(x,t).
\end{equation}
Placing the non-local flux \eqref{q-elegido} in \eqref{cont eq} we recover the space fractional diffusion equation 
\begin{equation}\label{gov eq}
 \frac{\p u}{\p t}(x,t)- \frac{\p }{\p x}D_x^{\al}u(x,t)=0
\end{equation}
which is the governing equation in the liquid phase.  
We assume that our melting phenomena is modeled by a sharp interface. Then, the phase change occurs due to a jump of the latent heat between the two phases. More precisely 
\begin{equation}\label{RK-eq}
\lim\limits_{x\nearrow s(t)}q(x,t)=\dot{s}(t)
\end{equation}
and we get the ``fractional Stefan condition''  by placing  \eqref{q-elegido} into \eqref{RK-eq}. Our interest lies in the problem with a  Neumann type boundary condition $(\ref{IFSSP}-ii)$,  given by a prescribed flux at the face $x=0$ in terms of the non-local flux \eqref{q-elegido-0}.
We can present now the main problem that we will study in this paper: Find the pair of functions $u\colon Q_{s,T}\rightarrow \bbR$ and $s\colon [0,T]\rightarrow \bbR$ with sufficient regularity such that
\begin{equation}\label{IFSSP}
\begin{array}{lll}
(i)& u_t(x,t)=\frac{\p}{\p x} \, D_x^{\al} u(x,t) & 0<x<s(t), 0<t<T,\\
(ii) & -D_x^{\al} u(0^+,t)=h(t)\geq 0  &  0<t<T,\\
(iii) & u(s(t),t)=0 & 0<t<T,\\
(iv) & s(0)=0, & \\
(v) & \dot{s}(t)=-\lim\limits_{x\rightarrow s(t)^-} \,D_x^{\al} u(x,t) & 0<t<T,
\end{array}
\end{equation}
where the region  $Q_{{s,T}}$ is defined as\begin{equation}\label{Q_sT}Q_{s.T}:=\{(x,t):0< x< s(t),0< t< T\}.\end{equation}

The literature related to space fractional Stefan  problems is rather scant, here we recall that the self-similar solutions to (\ref{IFSSP}) in terms of special functions have been recently obtained in \cite{RoTaVe:2020}, where in accordance with the work of Voller for the stationary case  \cite{Vo:2014}, the solution verifies that  the advance of the free boundary  is proportional to $t^{\frac{1}{1+\al}}$ instead of being proportional to the square root of $t$, as in classical diffusion. As for the results concerning the space-fractional Stefan problem with the fractional Laplace operator we refer to \cite{delteso} and reference therein. Concerning the linear space-fractional diffusion, modeled by the Caputo derivative, in the cylindrical domain, we recommend the reader the paper \cite{NaRy:2020} where the viscosity solutions are studied and \cite{BaMe:2018}, \cite{RyPhD}, where different types of  boundary conditions are discussed. 

It has to be mentioned that a space-fractional problem where the fractional Neumann condition (\ref{IFSSP} - ii) is replaced by homogeneous condition $u_{x}(0,t) = 0$, and (\ref{IFSSP} - iv) is replaced by $s(0) = b>0$ was recently solved in \cite{Rys:2020}. The approach in such article was based on the transformation of domain into a cylindrical one and the semigroup theory. However, in view of (\ref{IFSSP} - iv) the lines of the  proof in \cite{Rys:2020} cannot be followed. In order to find a solution to  (\ref{IFSSP}) we apply the following strategy. We approximate (\ref{IFSSP}) by the sequence of problems (\ref{FSSP-al-N-2}), where for $b$ we take $1/m$ for $m \in \mathbb{N}$. Here we present the problem~(\ref{FSSP-al-N-2}): Find the pair of functions $u\colon Q_{s,T}\rightarrow \bbR$ and $s\colon [0,T]\rightarrow \bbR$ with sufficient regularity such that 
\begin{equation}\label{FSSP-al-N-2}
\begin{array}{lll}
(i)& u_t(x,t)=\frac{\p}{\p x} \, D_x^{\al} u(x,t) & 0<x<s(t), 0<t<T,\\
(ii) & -D_x^{\al} u(0^+,t)= h(t)\geq 0  &  0<t<T,\\
(iii) & u(s(t),t)=0 & 0<t<T,\\
(iv) & u(x,0)=\uz(x)\geq 0 & 0<x<s(0)=b,\\
(v) & \dot{s}(t)=-\lim\limits_{x\rightarrow s(t)^-} D_x^{\al} u(x,t) & 0<t<T.
\end{array}
\end{equation}
We will refer to problem \eqref{IFSSP} as the \textbf{``$b=0$ case''}, and to problem \eqref{FSSP-al-N-2}  as the \textbf{``$b>0$ case''}.
In order to solve (\ref{FSSP-al-N-2}) we adjust the methods used in \cite{Rys:2020} for the case of non-homogeneus fractional Neumann boundary condition. We will see that this choice of boundary condition has an impact on the regularity of the solution near the left boundary. Hence, the proof has to be significantly modified. Then we will find $b$ - independent energy estimates for a solution to (\ref{FSSP-al-N-2}) and we will obtain the existence of the solution $(u,s)$ to (\ref{IFSSP}) by the appropriate limit passage.

In Section 2 we recall the definitions and properties of fractional integrals and derivatives which will be used in subsequent sections. We also present the properties related to moving boundary problems (MBP) and free boundary problems (FBP). Section 3 is devoted to the proof of the existence of a unique classical solution to (\ref{FSSP-al-N-2}). The proof relies on the semigroup approach and the maximum principles. Firstly, we solve the moving boundary problem (Theorem \ref{existance}), and secondly the fractional Stefan problem  (Theorem \ref{finala}). In Section 4 we establish $b$ - independent estimates for a solutions to (\ref{FSSP-al-N-2}) and we obtain a limit solution $(u,s)$ to \eqref{IFSSP} by choosing a subsequence of solutions $(u^{m},s^{m})$ to problem (\ref{FSSP-al-N-2}) with $b = 1/m$. The optimal regularity of the solutions to \eqref{IFSSP} as well as its uniqueness remains an open problem.

The most remarkable results  obtained in this paper are: $(i)$ An existence and regularity result for a space fractional free boundary problem given  in \textbf{Theorem \ref{finala}}, and $(ii)$ an existence and regularity result for a 
solution to a  space fractional Stefan problem \eqref{IFSSP} given in \textbf{Theorem \ref{bzerofinal}}.

Moreover, the paper provides an expansion of tools from \cite{Rys:2020} for the study of  moving and free boundary problems governed by equation \eqref{gov eq} with prescribed fractional flux at the boundary. More precisely: an extremum principle in Proposition \ref{nonpositivity}, an integral condition equivalent to the fractional Stefan condition in Theorem \ref{condicionintegralNC}, a monotonicity result given in Theorem \ref{desigfronteraN} and energy estimates given in Lemma~\ref{estfirst-Lemma} and Lemma~\ref{szac2-lemma}.

\section{Preliminary results}
\subsection{Basics of fractional calculus}
In this section we will recall the definitions of fractional operators and their most important properties.
Here and henceforth, by $L$ we denote a fixed positive constant.
\begin{defi} Let $L>0$ and $\al \in (0,1)$. 
\begin{enumerate}
	\item For every $f \in L^1(0,L)$ the Riemann-Liouville fractional integral of order $\al$ is defined by  \begin{equation}\label{IntRL} I^{\al}f(x)=\frac{1}{\G(\al)}\int_0^x(x-p)^{\al-1} f(p) \rmd p.\end{equation}
\item For $f$ regular enough, we define the Riemann-Liouville fractional derivative as
 \begin{equation}\label{DRL} \p^{\al}f(x)=\frac{d}{d x}I^{1-\al} f(x)=\frac{1}{\G(1-\al)}\frac{d}{dx}\int_0^x(x-p)^{-\al} f(p) \rmd p\end{equation}

and  the fractional derivative of Caputo as 
 \begin{equation}\label{DC} D^{\al}f(x)=I^{1-\al}f'(x)=\frac{1}{\G(1-\al)}\int_0^x(x-p)^{-\al} f'(p) \rmd p.\end{equation}
 
\end{enumerate}

\end{defi}
\begin{remark}\label{reright}
  We will also make use of 
  the right-side operators for $\al \in (0,1)$ defined as follows
  \[
I^{\al}_{-}f(x) = \frac{1}{\Gamma(\al)}\int_{x}^{L}(p-x)^{\al-1}f(p)dp,
\]
\[
\p^{\al}_{-}f(x) = - \poch (I^{1-\al}_{-}f)(x) \m{ and } D^{\al}_{-}f(x) = - \poch I^{1-\al}_{-}[f(x)-f(L)].
\]
\no Below we will discuss the properties of operators given by (\ref{IntRL}) - (\ref{DC}), but of course analogous results may be obtained  for $I^{\al}_{-}$, $\p^{\al}_{-}$~and~$D^{\al}_{-}$.
\end{remark}

\begin{note}
It is clear that the fractional derivatives are well defined for absolutely continuous functions. In this paper, we will work mainly within the framework of Sobolev spaces. Hence, we will make use of the  characterization of the domain of the Riemann-Liouville fractional derivative in $L^{2}$ - space. 
\end{note}
Before we will recall this result let us introduce the following functional space
\[
{}_{0}H^{1}(0,L):=\{ u \in H^{1}(0,L): u(0) = 0\}.
\]
Then for $\al \in (0,1)$ we denote 
\[
{}_{0}H^{\al}(0,L):=[L^{2}(0,L),{}_{0}H^{1}(0,L)]_{\al}.
\]
This complex interpolation space may be characterized as follows
\[
{}_{0}H^{\al}(0,L)=\left\{
\begin{array}{lll}
H^{\al}(0,L)  & \m{ for } & \al\in \left(0,\frac{1}{2}\right), \\
\left\{ u \in H^{\frac{1}{2}}(0,L): \hd \int_{0}^{L}\frac{|u(x)|^{2}}{x}dx<\infty \right\} & \m{ for } & \al=\frac{1}{2}, \\
\left\{ u \in H^{\al}(0,L): \hd u(0)=0 \right\} & \m{ for } & \al\in \left(\frac{1}{2},1\right).
\end{array}
\right.
\]
Now we are ready to recall the characterization of the domain of the Riemann-Liouville fractional derivative which will be a crucial tool in this paper.
\begin{prop}\label{eq}\cite[Example 2.1]{zachermax}, \cite{Y}
For $\al\in [0,1]$ the operators
$\ia: L^{2}(0,L)\longrightarrow {}_{0}H^{\al}(0,L)$ and $\p^{\al}:{}_{0}H^{\al}(0,L)\longrightarrow L^{2}(0,L)$ are isomorphism and the following
inequalities hold
$$\ca^{-1} \| u \|_{_{0}H^{\al}(0,L)}
\leq \| \p^{\al} u \|_{L^{2}(0,L)}\leq \ca
\| u \|_{_{0}H^{\al}(0,L)} \hd \m{ for } u \in {}_{0}H^{\al}(0,L),
$$

$$
\ca^{-1} \| \ia f  \|_{_{0}H^{\al}(0,L)}
\leq \|  f \|_{L^{2}(0,L)}\leq \ca \| \ia f
\|_{_{0}H^{\al}(0,L)} \hd \m{ for } f  \in L^{2}(0,L).
$$
\end{prop}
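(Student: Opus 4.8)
The plan is to reduce the statement to the two endpoint cases $\al=0$ and $\al=1$ and then interpolate. For $\al=0$ we have $\ia=\p^{\al}=\mathrm{Id}$ on $L^{2}(0,L)$ and ${}_{0}H^{0}(0,L)=L^{2}(0,L)$, so both inequalities are trivial. For $\al=1$, $I^{1}f(x)=\int_{0}^{x}f(p)\,\rmd p$, and the fundamental theorem of calculus shows that $I^{1}\colon L^{2}(0,L)\to {}_{0}H^{1}(0,L)$ is a bijection with inverse $\frac{\rmd}{\rmd x}$; here $\|f\|_{L^{2}}\le\|I^{1}f\|_{{}_{0}H^{1}}$ is immediate, and the reverse bound $\|I^{1}f\|_{{}_{0}H^{1}}\le c\|f\|_{L^{2}}$ follows from the Poincar\'e inequality on $(0,L)$, which controls the $L^{2}$-part of $I^{1}f$ by $\|(I^{1}f)'\|_{L^{2}}=\|f\|_{L^{2}}$.

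Before interpolating I would record the two algebraic identities that tie the operators across the scale: the semigroup law $\ia I^{\be}=I^{\al+\be}$ for $\al,\be\ge 0$ (Fubini), and $\p^{\al}\ia=\mathrm{Id}$ on $L^{2}(0,L)$ (differentiation under the integral sign). The first already gives injectivity of $\ia$ on $L^{2}$, since $\ia f=0$ forces $I^{1}f=I^{1-\al}\ia f=0$, hence $f=0$. For the boundedness $\ia\colon L^{2}(0,L)\to {}_{0}H^{\al}(0,L)$ with constant $\ca$ I would treat $\{I^{z}\}_{0\le\Re z\le 1}$ as an analytic family of operators: using the boundedness of the purely imaginary Riemann--Liouville powers $I^{it}$ on $L^{2}(0,L)$ (a Mellin-multiplier estimate) together with $I^{1+it}=I^{1}I^{it}\colon L^{2}(0,L)\to {}_{0}H^{1}(0,L)$, Stein interpolation of the analytic family yields boundedness of $\ia$ from $L^{2}(0,L)$ into $[L^{2}(0,L),{}_{0}H^{1}(0,L)]_{\al}={}_{0}H^{\al}(0,L)$, the admissible growth in $|\Im z|$ along the two boundary lines producing the constant $\ca$. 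The same scheme applied to $\{\p^{z}\}$ gives boundedness of $\p^{\al}\colon {}_{0}H^{\al}(0,L)\to L^{2}(0,L)$.

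To finish I would establish surjectivity of $\ia$ onto ${}_{0}H^{\al}(0,L)$, equivalently $\ia\p^{\al}u=u$ for every $u\in {}_{0}H^{\al}(0,L)$. For $u$ smooth and vanishing at $x=0$ this is a direct computation (again via Fubini), and such functions are dense in ${}_{0}H^{\al}(0,L)$ for $\al\in(0,1)$, so the boundedness already established lets one pass to the limit. Thus $\ia$ is a bounded bijection between Banach spaces; by the open mapping theorem its inverse is bounded, and since $\p^{\al}\ia=\mathrm{Id}$ and $\ia$ is onto, that inverse is exactly $\p^{\al}$. The four displayed inequalities are then just the two-sided estimates $\ca^{-1}\|u\|_{{}_{0}H^{\al}(0,L)}\le\|\p^{\al}u\|_{L^{2}(0,L)}\le\ca\|u\|_{{}_{0}H^{\al}(0,L)}$ and their counterpart for $\ia$.

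The main obstacle is that $\ia$ is not literally the interpolate of the two endpoint operators, so one is forced to work with the whole analytic family $\{I^{z}\}$; the technical heart is the $L^{2}$-boundedness of the imaginary powers $I^{it}$ and the verification that the family grows admissibly on the lines $\Re z\in\{0,1\}$, after which Stein interpolation and the identification ${}_{0}H^{\al}(0,L)=[L^{2}(0,L),{}_{0}H^{1}(0,L)]_{\al}$ (already quoted above, and genuinely delicate at $\al=\tfrac12$ because of the Hardy-type weight $\int_{0}^{L}|u(x)|^{2}/x\,\rmd x<\infty$) deliver the result. I would not reprove this from scratch but invoke \cite{zachermax,Y}.
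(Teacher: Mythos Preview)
The paper does not supply a proof of this proposition at all: it is stated with citations to \cite[Example 2.1]{zachermax} and \cite{Y} and then used as a black box throughout. So there is no ``paper's own proof'' to compare against.

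Your outline is nonetheless the right one, and it is essentially the route taken in the cited references. The reduction to the analytic family $\{I^{z}\}_{0\le\Re z\le 1}$ together with bounded imaginary powers $I^{it}$ on $L^{2}$ and Stein interpolation is precisely how one obtains $I^{\al}\colon L^{2}\to[L^{2},{}_{0}H^{1}]_{\al}={}_{0}H^{\al}$; the identification of that interpolation space with the explicit description given in the paper (including the Hardy-weight characterisation at $\al=\tfrac12$) is the content of \cite{Y}. Your density argument for $I^{\al}\p^{\al}u=u$ on ${}_{0}H^{\al}$ is fine: for $\al\neq\tfrac12$ it is immediate from the trace/no-trace dichotomy, and for $\al=\tfrac12$ one still has density of $C_{c}^{\infty}(0,L]$ functions vanishing at $0$ in the weighted space. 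The only point I would flag is that you should be explicit that the admissible growth bound on $\|I^{it}\|_{L^{2}\to L^{2}}$ is at most exponential in $|t|$ (this is what Stein interpolation requires), which is exactly what the Mellin-multiplier computation delivers; you gesture at this but do not state it. Otherwise the sketch is sound, and your closing remark that you would ultimately invoke the references rather than reprove the imaginary-power bound from scratch matches what the paper itself does.
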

From the proposition above one may easily deduce the following corollary.
\begin{coro}\label{eqc}\cite[Corollary 1]{Rys:2020}
For $\al, \beta >0$ there holds $I^{\beta}: {}_{0}H^{\al}(0,L)\rightarrow {}_{0}H^{\al+\beta}(0,L),$
where in the case $\gamma > 1$
\[
_{0}H^{\gamma}(0,L) = \{f \in H^{\gamma}(0,L): f^{(k)}(0) = 0,\hd  k=0,\dots, \lfloor\gamma\rfloor -1,  \hd f^{(\lfloor\gamma\rfloor)} \in {}_{0}H^{\gamma - \lfloor\gamma\rfloor}(0,L) \}.
\]
Furthermore, there exists a positive constant $c$ dependent only on $\al,\beta$ such that for every $f \in {}_{0}H^{\al}(0,L) $
\[
\norm{I^{\beta}f}_{{}_{0}H^{\al+\beta}(0,L)} \leq c \norm{f}_{{}_{0}H^{\al}(0,L)}.
\]
\end{coro}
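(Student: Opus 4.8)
The plan is to bootstrap everything from Proposition~\ref{eq}, which controls orders in $[0,1]$, using the semigroup identity $I^{\mu}I^{\nu}=I^{\mu+\nu}$ for $\mu,\nu>0$, Cauchy's formula $I^{n}=(I^{1})^{n}$ for $n\in\bbN$, and one elementary observation about the ordinary primitive $I^{1}f(x)=\int_{0}^{x}f(p)\,\rmd p$.

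\emph{Step 1: $I^{1}$ shifts the smoothness scale by one.} First I would verify that for every $\gamma\ge 0$ the map $I^{1}$ sends ${}_{0}H^{\gamma}(0,L)$ boundedly into ${}_{0}H^{\gamma+1}(0,L)$. Setting $g=I^{1}f$ one has $g(0)=0$, $g'=f$, and $g^{(k)}=f^{(k-1)}$ for $k\ge1$; since $\lfloor\gamma+1\rfloor=\lfloor\gamma\rfloor+1$, the vanishing conditions $g^{(k)}(0)=0$ for $k\le\lfloor\gamma+1\rfloor-1$ reduce precisely to $g(0)=0$ together with the conditions $f^{(j)}(0)=0$ for $j\le\lfloor\gamma\rfloor-1$ valid by hypothesis, while the remaining condition $g^{(\lfloor\gamma+1\rfloor)}=f^{(\lfloor\gamma\rfloor)}\in{}_{0}H^{\gamma-\lfloor\gamma\rfloor}={}_{0}H^{(\gamma+1)-\lfloor\gamma+1\rfloor}$ is exactly the top condition defining ${}_{0}H^{\gamma}$. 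The norm bound is trivial: $\|I^{1}f\|_{L^{2}(0,L)}\le L\,\|f\|_{L^{2}(0,L)}$ and $\|(I^{1}f)'\|_{H^{\gamma}(0,L)}=\|f\|_{H^{\gamma}(0,L)}$. Iterating, $I^{n}=(I^{1})^{n}:{}_{0}H^{\gamma}(0,L)\to{}_{0}H^{\gamma+n}(0,L)$ is bounded for every $n\in\bbN$ and $\gamma\ge0$, with constant depending only on $n$ and $L$.

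\emph{Step 2: extending the isomorphism $I^{\delta}:L^{2}\to{}_{0}H^{\delta}$ to all $\delta>0$, and factoring elements of ${}_{0}H^{\alpha}$.} Writing $\delta=\lfloor\delta\rfloor+\delta_{0}$ with $\delta_{0}\in[0,1)$, the identity $I^{\delta}=I^{\lfloor\delta\rfloor}I^{\delta_{0}}$ combined with Proposition~\ref{eq} (which applies since $\delta_{0}\le1$) and Step~1 shows $I^{\delta}:L^{2}(0,L)\to{}_{0}H^{\delta}(0,L)$ is bounded, with constant depending only on $\delta$. Conversely, if $f\in{}_{0}H^{\alpha}(0,L)$, then because $f^{(k)}(0)=0$ for $k\le\lfloor\alpha\rfloor-1$, Cauchy's formula gives $f=I^{\lfloor\alpha\rfloor}\bigl(f^{(\lfloor\alpha\rfloor)}\bigr)$ with $f^{(\lfloor\alpha\rfloor)}\in{}_{0}H^{\alpha-\lfloor\alpha\rfloor}(0,L)$; applying Proposition~\ref{eq} in the sub-unit order $\alpha-\lfloor\alpha\rfloor$ we may write $f^{(\lfloor\alpha\rfloor)}=I^{\alpha-\lfloor\alpha\rfloor}\psi$ for some $\psi\in L^{2}(0,L)$ with $\|\psi\|_{L^{2}(0,L)}\le c_{\alpha}\|f^{(\lfloor\alpha\rfloor)}\|_{{}_{0}H^{\alpha-\lfloor\alpha\rfloor}(0,L)}\le c_{\alpha}\|f\|_{{}_{0}H^{\alpha}(0,L)}$, whence $f=I^{\alpha}\psi$ by the semigroup identity.

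\emph{Step 3: conclusion, and the main obstacle.} For $f\in{}_{0}H^{\alpha}(0,L)$, write $f=I^{\alpha}\psi$ as in Step~2. Then $I^{\beta}f=I^{\beta}I^{\alpha}\psi=I^{\alpha+\beta}\psi$, which by Step~2 (with $\delta=\alpha+\beta$) belongs to ${}_{0}H^{\alpha+\beta}(0,L)$ and satisfies $\|I^{\beta}f\|_{{}_{0}H^{\alpha+\beta}(0,L)}\le c\,\|\psi\|_{L^{2}(0,L)}\le c\,c_{\alpha}\,\|f\|_{{}_{0}H^{\alpha}(0,L)}$, which is the claimed inclusion and estimate. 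The only point requiring care is the bookkeeping in Step~1 with the recursive definition of ${}_{0}H^{\gamma}$ for $\gamma>1$: one must check that primitivation shifts each of the finitely many trace conditions $f^{(k)}(0)=0$ correctly, that the surviving top derivative lands in the right interpolation space (paying attention to the behaviour of $\lfloor\cdot\rfloor$ at integers and to the degenerate cases $\gamma=0$ and $\gamma\in\bbN$), and that the finitely many (at most $\lfloor\alpha\rfloor+\lfloor\beta\rfloor+2$) applications of Proposition~\ref{eq} and of Step~1 accumulate only into a constant depending on $\alpha$ and $\beta$.
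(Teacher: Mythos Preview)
The paper does not prove this corollary; it is quoted verbatim from \cite[Corollary~1]{Rys:2020}, so there is no proof in the paper to compare against.

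Your argument is correct and is the natural one: reduce everything to Proposition~\ref{eq} via the semigroup identity $I^{\mu}I^{\nu}=I^{\mu+\nu}$ and the elementary fact that $I^{1}$ raises the ${}_{0}H^{\gamma}$-scale by one. The factoring $f=I^{\alpha}\psi$ for $f\in{}_{0}H^{\alpha}$ is exactly what makes the proof work, and your verification of the trace conditions in Step~1 is sound (including at integer~$\gamma$, where $\lfloor\gamma\rfloor=\gamma$ and the top condition degenerates to $f^{(\gamma)}\in L^{2}$, which is automatic). One cosmetic remark: the bound $\|I^{1}f\|_{L^{2}}\le L\|f\|_{L^{2}}$ in Step~1 introduces a dependence on $L$ into your constant. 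This is harmless in the context of the paper, which fixes $L$ once and for all and suppresses it in all constants (Proposition~\ref{eq} already does so in $c_{\alpha}$), so your final constant ``depending only on $\alpha,\beta$'' matches the stated convention.
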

We will also make use of the following local result.
\begin{lemma}\label{local}\cite[Lemma 4]{Rys:2020}
Let $f\in {}_{0}H^{\al}(0,L)$ for $\al \in (0,1)$ and $\p^{\al}f \in H^{\beta}_{loc}(0,L)$ for $\beta \in (\frac{1}{2},1]$. Then $f \in H^{\beta+\al}_{loc}(0,L)$ and for every $0<\delta <\omega<1$ there exists a positive constant $c=c(\delta,\al,\beta)$ such that
\eqq{
\norm{f}_{H^{\beta+\al}(\delta,\omega)} \leq c(\norm{f}_{{}_{0}H^{\al}(0,\omega)} + \norm{\p^{\al}f}_{H^{\beta}(\frac{\delta}{2},\omega)}).
}{nowelc}
\end{lemma}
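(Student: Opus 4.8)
The plan is to reduce the statement to a smoothing property of the Riemann--Liouville integral $I^{\al}$. By Proposition~\ref{eq} the maps $\p^{\al}$ and $I^{\al}$ are mutually inverse isomorphisms between ${}_0H^{\al}(0,L)$ and $L^{2}(0,L)$, so, setting $g:=\p^{\al}f\in L^{2}(0,L)$, we have $f=I^{\al}g$, while by hypothesis $g\in H^{\be}_{loc}(0,L)$. Since $I^{\al}$ is a Volterra operator, the values of $f$ on $(\delta,\omega)$ depend only on $g$ on $(0,\omega)$; the idea is to split $g$ into a piece supported near $0$, where we control only $\|g\|_{L^{2}}$, and a piece supported in $(\tfrac{\delta}{2},\omega)$, where $g$ is $H^{\be}$, and to estimate the two resulting contributions to $f$ by different means.

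Concretely, I would fix a cutoff $\eta\in C^{\infty}(\bbR)$ with $\eta\equiv0$ on $(-\infty,\tfrac{\delta}{2}]$ and $\eta\equiv1$ on $[\tfrac{3\delta}{4},\infty)$, and write $g=g_{1}+g_{2}$ with $g_{1}:=(1-\eta)g$ and $g_{2}:=\eta g$, so that $\supp g_{1}\subset[0,\tfrac{3\delta}{4}]$ and $g_{2}$ vanishes on $(0,\tfrac{\delta}{2}]$; then $f=I^{\al}g_{1}+I^{\al}g_{2}$. For the first summand, for $x\in(\delta,\omega)$ one has $I^{\al}g_{1}(x)=\tfrac{1}{\G(\al)}\int_{0}^{3\delta/4}(x-p)^{\al-1}g_{1}(p)\,\rmd p$ with $x-p\geq\tfrac{\delta}{4}$ on the whole range of integration, so the kernel is smooth in $x$ with bounds depending only on $\delta,\al$; differentiating under the integral sign yields $I^{\al}g_{1}\in C^{\infty}([\delta,\omega])$ with $\|I^{\al}g_{1}\|_{C^{2}([\delta,\omega])}\leq c(\delta,\al)\|g\|_{L^{2}(0,\omega)}$. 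Applying Proposition~\ref{eq} on $(0,\omega)$ (using that $\p^{\al}$ is local and that $f$ restricts to an element of ${}_0H^{\al}(0,\omega)$) gives $\|g\|_{L^{2}(0,\omega)}=\|\p^{\al}f\|_{L^{2}(0,\omega)}\leq\ca\|f\|_{{}_0H^{\al}(0,\omega)}$, and since $\be+\al\leq2$ we conclude $\|I^{\al}g_{1}\|_{H^{\be+\al}(\delta,\omega)}\leq c(\delta,\al,\be)\|f\|_{{}_0H^{\al}(0,\omega)}$.

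For the second summand, the hypothesis $\be>\tfrac12$ gives $H^{\be}\hookrightarrow C^{0}$ and makes multiplication by the fixed smooth function $\eta$ bounded on $H^{\be}$, so $g_{2}=\eta g\in H^{\be}(0,\omega)$ with $\|g_{2}\|_{H^{\be}(0,\omega)}\leq c(\delta,\al,\be)\|g\|_{H^{\be}(\frac{\delta}{2},\omega)}$; moreover, $g_{2}$ being supported in $[\tfrac{\delta}{2},\omega)$, we have $g_{2}(0)=0$, i.e.\ $g_{2}\in{}_0H^{\be}(0,\omega)$. By Corollary~\ref{eqc} applied on $(0,\omega)$ (with the roles of $\al$ and $\be$ interchanged with respect to its statement), $I^{\al}g_{2}\in{}_0H^{\be+\al}(0,\omega)$ and $\|I^{\al}g_{2}\|_{{}_0H^{\be+\al}(0,\omega)}\leq c\|g_{2}\|_{{}_0H^{\be}(0,\omega)}\leq c(\delta,\al,\be)\|\p^{\al}f\|_{H^{\be}(\frac{\delta}{2},\omega)}$. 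Restricting $f=I^{\al}g_{1}+I^{\al}g_{2}$ to $(\delta,\omega)$ and summing the two bounds yields $f\in H^{\be+\al}(\delta,\omega)$ together with~(\ref{nowelc}); since $0<\delta<\omega<1$ are arbitrary, $f\in H^{\be+\al}_{loc}(0,L)$.

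I expect the only genuine difficulty to be the interior cut point: a naive splitting of $g$ at a single point would produce an ``inner'' piece with nonzero trace at the left endpoint of $(\tfrac{\delta}{2},\omega)$, which does not belong to ${}_0H^{\be}$ and so cannot be fed into Corollary~\ref{eqc}. Letting $\eta$ perform its transition inside $(\tfrac{\delta}{2},\tfrac{3\delta}{4})$ forces $g_{2}$ to vanish on an entire left neighbourhood and removes this obstruction, and this is precisely where the hypothesis $\be>\tfrac12$ is used (it guarantees continuity of $g$ and the boundedness of multiplication by $\eta$ on $H^{\be}$ with no trace correction). Everything else is bookkeeping with the mapping properties recorded in Proposition~\ref{eq} and Corollary~\ref{eqc} and with the smoothing of the kernel $(x-p)^{\al-1}$ away from its diagonal singularity.
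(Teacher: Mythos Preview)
Your argument is correct. The paper does not supply its own proof of this lemma --- it is quoted verbatim from \cite[Lemma~4]{Rys:2020} --- so there is nothing in the present paper to compare against; your splitting $g=\p^{\al}f=(1-\eta)g+\eta g$ with a smooth cutoff, handling $I^{\al}g_{1}$ by kernel smoothness away from the diagonal and $I^{\al}g_{2}$ via Corollary~\ref{eqc} on ${}_{0}H^{\beta}(0,\omega)$, is exactly the natural route and every step is justified. Your remark about why the cutoff must transition \emph{inside} $(\tfrac{\delta}{2},\tfrac{3\delta}{4})$ (so that $g_{2}$ genuinely lies in ${}_{0}H^{\beta}$) correctly identifies the one place where care is needed.
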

In the following proposition we collect the basic properties of fractional operators.
\begin{prop}\label{propo frac} The following properties involving the fractional integrals and derivatives of order $\al \in (0,1)$ hold:
\begin{enumerate}
\item \label{RL inv a izq de I} For every $f \in L^1(0,L)$,  
$$\p^{\al} I^{\al}f=f  \quad a.e. \text{ in } \, (0,L).$$

\item  For every $f \in AC[0,L]$,    
$$ I^{\al}(\p^{\al}f)(x)=f(x) - \dfrac{I^{1-\al}f(0^+)}{\G(\al)x^{1-\al}} \quad \quad \forall \, x \in [0,L]. $$

\item\label{propClave} For every  $f \in AC[0,L]$ such that $I^{1-\al}f' \in AC[0,L]$ it holds that 
$$\frac{d}{d x}D^\alpha f=\p^\alpha f', \qquad a.e. \text{ in } (0,L).$$
\item\label{propClave2} For every  $f \in AC[0,L]$ 
$$\p^{1-\alpha}D^{\al} f=f', \qquad a.e. \text{ in } (0,L).$$

\item\label{Proposition 3.1} If $f\in H^{1+\al}(0,L)$and $\al \in (0,1)$, then $D^\al f(0) = 0$. 
\end{enumerate}
\end{prop}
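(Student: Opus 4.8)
The plan is to obtain (1)--(4) from two classical facts about Riemann--Liouville operators — the semigroup law $I^{\al}I^{\be}f=I^{\al+\be}f$ on $L^{1}(0,L)$ (for $\al,\be>0$) and the differentiation identity $\tfrac{d}{dx}I^{\al}g=I^{\al}g'+\tfrac{g(0^{+})}{\Gamma(\al)}x^{\al-1}$ valid for $g\in AC[0,L]$ — and to give a separate argument for (5), the only item that actually uses the regularity $f\in H^{1+\al}$.

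For (1) I would first prove $I^{\al}I^{\be}=I^{\al+\be}$ by Fubini's theorem, the substitution $p=r+(x-r)t$, and the Beta-function identity $\int_{0}^{1}(1-t)^{\al-1}t^{\be-1}\,dt=\Gamma(\al)\Gamma(\be)/\Gamma(\al+\be)$; taking $\be=1-\al$ gives $I^{1-\al}I^{\al}f=I^{1}f\in AC[0,L]$, hence $\p^{\al}I^{\al}f=\tfrac{d}{dx}I^{1}f=f$ a.e. For (2): if $f\in AC[0,L]$ then (writing $f=f(0^{+})+I^{1}f'$) $g:=I^{1-\al}f\in AC[0,L]$ and $\p^{\al}f=g'$; applying the differentiation identity to $g$ and using $I^{\al}g=I^{\al}I^{1-\al}f=I^{1}f$ yields $I^{\al}(\p^{\al}f)=f-\tfrac{(I^{1-\al}f)(0^{+})}{\Gamma(\al)}x^{\al-1}$. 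Items (3) and (4) then follow directly from the definition $D^{\al}f=I^{1-\al}f'$: one has $\tfrac{d}{dx}D^{\al}f=\tfrac{d}{dx}I^{1-\al}f'=\p^{\al}f'$ wherever the derivative exists, which the hypothesis $I^{1-\al}f'\in AC$ in (3) guarantees a.e.; and $\p^{1-\al}D^{\al}f=\p^{1-\al}I^{1-\al}f'=f'$ a.e.\ by (1) applied to $f'\in L^{1}(0,L)$ with $\al$ replaced by $1-\al$.

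For (5): since $f\in H^{1+\al}(0,L)$ we have $f'\in H^{\al}(0,L)$, and the claim is $(I^{1-\al}f')(0^{+})=0$. I would pick $q\in(2,\infty]$ with $H^{\al}(0,L)\hookrightarrow L^{q}(0,L)$ and $\al q'<1$, where $q'$ is the conjugate exponent — this is possible for every $\al\in(0,1)$ (take $q=2$ when $\al<\tfrac12$, any large finite $q$ when $\al=\tfrac12$, and $q=\infty$ when $\al>\tfrac12$). Then Hölder's inequality gives, for $0<x\le L$,
\[
|D^{\al}f(x)|\le\frac{\|f'\|_{L^{q}(0,L)}}{\Gamma(1-\al)}\Big(\int_{0}^{x}(x-p)^{-\al q'}\,dp\Big)^{1/q'}=C\,x^{1/q'-\al}\,\|f'\|_{L^{q}(0,L)},
\]
and since $1/q'-\al>0$ we get $D^{\al}f(x)\to0$ as $x\to0^{+}$; because $I^{1-\al}f'$ has a continuous representative on $[0,L]$ (standard mapping properties of $I^{1-\al}$, or Proposition~\ref{eq} when $\al<\tfrac12$), this is precisely $D^{\al}f(0)=0$. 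I would also record that when $\al<\tfrac12$ this already follows from $f\in H^{1}$ alone, since then $I^{1-\al}f'\in{}_{0}H^{1-\al}(0,L)$ by Proposition~\ref{eq} and $1-\al>\tfrac12$ forces a vanishing trace at $0$.

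The bookkeeping in (1)--(4) is routine. The one delicate point is (5) in the range $\al\ge\tfrac12$: there $f'$ carries no boundary trace, so one cannot invoke a trace theorem, and instead one must extract the decay $I^{1-\al}f'(x)=O(x^{1/q'-\al})$ at the origin from the integrability of the singular kernel $(x-p)^{-\al}$ paired against $f'\in L^{q}(0,L)$.
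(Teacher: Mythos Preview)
Your argument is correct and, in fact, goes well beyond what the paper does: the paper simply cites Samko et al.\ for items (1)--(4) and \cite{NaRyVo:2021} for item (5), without reproducing any of the computations. Your derivation of (1)--(4) from the semigroup identity and the differentiation rule for $I^{\al}$ of an $AC$ function is the standard one and is cleanly written; the only cosmetic slip is that you declare $q\in(2,\infty]$ and then take $q=2$ in the case $\al<\tfrac12$ --- the range should read $q\in[2,\infty]$.

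For item (5), your H\"older argument is exactly the right idea and is essentially what lies behind the cited reference. Two small points worth tightening: first, in the borderline case $\al=\tfrac12$ the continuity of $D^{\al}f=I^{1-\al}f'$ on $[0,L]$ does not follow from Proposition~\ref{eq} alone (since $H^{1/2}\not\hookrightarrow C$), but it does follow from the classical mapping $I^{\be}:L^{q}(0,L)\to C^{0,\be-1/q}[0,L]$ for $\be>1/q$, using $f'\in H^{1/2}\hookrightarrow L^{q}$ for every finite $q$. Second, your remark that for $\al<\tfrac12$ the conclusion already follows from $f\in H^{1}$ alone (via $I^{1-\al}f'\in{}_{0}H^{1-\al}$ and $1-\al>\tfrac12$) is a nice observation that is not made explicit in the paper.
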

\begin{proof} See Chapter 2 in \cite{Samko} for $1-4.$ and \cite{NaRyVo:2021} for $\ref{Proposition 3.1}$.
\end{proof}
The next proposition establishes the relation between the regularity of a function and the value of its Caputo derivative at the origin.
\begin{prop}\cite[Lemma 3.11]{RyPhD}\label{Lemma3.11-Phd-K}. Let $f \in  AC[0,L]$ and let us denote $D^\al f(0):=\lim\limits_{x\rightarrow 0}D^\al f(x)$. Then:
\begin{enumerate}
	\item If $D^\al f(0)$ exists and  $D^\al f(0)=c$, then $\lim\limits_{x\rightarrow 0} \frac{f(x)-f(0)}{x^\al} = \frac{c}{\G(1+\al)}.$
\item If the limit $\lim\limits_{x\rightarrow 0} \frac{f'(x)}{x^{\al-1}}$  exists and it is equal to $\frac{c}{\G(\al)}$, then $D^\al f(0)=c$.
\end{enumerate}
\end{prop}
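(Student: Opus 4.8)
The plan is to prove both statements by reducing the Caputo derivative to a Riemann–Liouville fractional integral of $f'$ and then applying a fractional version of the elementary fact that if $g$ has a limit at $0$ then so does its average. Since $f \in AC[0,L]$, we may write $f' \in L^1(0,L)$ and $D^\al f(x) = I^{1-\al} f'(x) = \frac{1}{\G(1-\al)}\int_0^x (x-p)^{-\al} f'(p)\,\rmd p$. The first step, common to both parts, will be the change of variables $p = x\tau$, giving
\[
D^\al f(x) = \frac{x^{1-\al}}{\G(1-\al)}\int_0^1 (1-\tau)^{-\al} f'(x\tau)\,\rmd \tau,
\qquad
\frac{f(x)-f(0)}{x^\al} = \frac{x^{1-\al}}{x^\al}\cdot\frac{1}{x^{1-\al}}\int_0^x f'(p)\,\rmd p \cdot \frac{1}{1}.
\]
Actually for the quotient it is cleaner to keep $f(x)-f(0) = \int_0^x f'(p)\,\rmd p$ and again substitute $p = x\tau$ to get $f(x)-f(0) = x\int_0^1 f'(x\tau)\,\rmd\tau$, so $\frac{f(x)-f(0)}{x^\al} = x^{1-\al}\int_0^1 f'(x\tau)\,\rmd\tau$. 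Note both $\G(1-\al) D^\al f(x)$ and $\frac{f(x)-f(0)}{x^\al}$ carry the same prefactor $x^{1-\al}$, and differ only by the weight $(1-\tau)^{-\al}$ versus $1$ inside the $\tau$-integral, with $\int_0^1 (1-\tau)^{-\al}\,\rmd\tau = \frac{1}{1-\al}$.

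For part 1: assume $D^\al f(0) = c$, i.e. $\frac{x^{1-\al}}{\G(1-\al)}\int_0^1 (1-\tau)^{-\al} f'(x\tau)\,\rmd\tau \to c$. I want to conclude $x^{1-\al}\int_0^1 f'(x\tau)\,\rmd\tau \to \frac{c}{\G(1-\al)}\cdot\frac{\G(1-\al)\,?}{\G(1+\al)}$ — let me instead argue via the known inversion identity. By Proposition \ref{propClave2} (or directly), $\p^{1-\al} D^\al f = f'$, and integrating: the cleanest route is to use property 2 of Proposition \ref{propo frac} style reasoning, namely that $D^\al f(0) = c$ combined with continuity lets us write $D^\al f(x) = c + r(x)$ with $r(x)\to 0$, then apply $I^\al$ to both sides. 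Since $D^\al f = I^{1-\al}f'$, we have $I^\al D^\al f = I^\al I^{1-\al} f' = I^1 f' = f - f(0)$. Hence
\[
f(x) - f(0) = I^\al(c + r)(x) = \frac{c\, x^\al}{\G(1+\al)} + I^\al r(x),
\]
and the main estimate is to show $x^{-\al} I^\al r(x) \to 0$ as $x\to 0$: this follows because $|I^\al r(x)| \le \frac{1}{\G(\al)}\int_0^x (x-p)^{\al-1}|r(p)|\,\rmd p \le \sup_{[0,x]}|r| \cdot \frac{x^\al}{\G(1+\al)}$, and $\sup_{[0,x]}|r|\to 0$. Dividing by $x^\al$ gives the claim. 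For part 2: assume $f'(x)/x^{\al-1} \to c/\G(\al)$, i.e. $f'(x) = \frac{c}{\G(\al)} x^{\al-1}(1 + o(1))$. Then write $D^\al f(x) = I^{1-\al} f'(x)$ and split $f'(p) = \frac{c}{\G(\al)} p^{\al-1} + e(p)$ with $e(p) = o(p^{\al-1})$. The first term gives $I^{1-\al}\big(\tfrac{c}{\G(\al)} p^{\al-1}\big)(x) = \tfrac{c}{\G(\al)} \cdot \tfrac{\G(\al)}{\G(1)} = c$ (using the standard formula $I^{\mu}(p^{\nu-1}) = \tfrac{\G(\nu)}{\G(\nu+\mu)} x^{\nu+\mu-1}$ with $\nu = \al$, $\mu = 1-\al$), and the remainder term $I^{1-\al} e(x)$ must be shown to tend to $0$; this is the analogue estimate: given $\epsilon$, for $p$ small $|e(p)| \le \epsilon p^{\al-1}$, so $|I^{1-\al}e(x)| \le \epsilon \cdot I^{1-\al}(p^{\al-1})(x) = \epsilon$, controlling the near-origin contribution, while the contribution from $p$ bounded away from $0$ is $O(x^{1-\al}) \to 0$.

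The main obstacle, and the only place requiring genuine care, is justifying the interchange and the convergence of the "remainder" fractional integrals $x^{-\al}I^\al r(x) \to 0$ (part 1) and $I^{1-\al} e(x) \to 0$ (part 2) uniformly enough — in particular handling that $e(p) = o(p^{\al-1})$ is only a pointwise statement near $0$, so one must split the integral at a small fixed $\delta$, bound the inner part using the smallness of the ratio, and bound the outer part using $f' \in L^1$ together with the decaying kernel $ (x-p)^{-\al}$ as $x \to 0$. Everything else is the bookkeeping of Gamma-function identities and the semigroup property $I^\al I^{1-\al} = I^1$ already available from Proposition \ref{propo frac}. I would present part 1 first via the inversion identity (shortest), then part 2 via the explicit power-function computation.
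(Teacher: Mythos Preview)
The paper does not actually prove this proposition; it is quoted verbatim from \cite[Lemma 3.11]{RyPhD} without argument, so there is no in-paper proof to compare against. Your approach is correct in substance and is presumably close to what any direct proof would look like: for part 1 you use the semigroup identity $I^{\al}D^{\al}f = I^{\al}I^{1-\al}f' = I^{1}f' = f - f(0)$ (valid for $f\in AC[0,L]$), split $D^{\al}f = c + r$ with $r\to 0$, and bound $x^{-\al}I^{\al}r(x)$ by $\sup_{(0,x]}|r|/\Gamma(1+\al)\to 0$; for part 2 you split $f'(p) = \tfrac{c}{\Gamma(\al)}p^{\al-1} + e(p)$ with $e(p)=o(p^{\al-1})$, compute $I^{1-\al}\big(\tfrac{c}{\Gamma(\al)}p^{\al-1}\big)\equiv c$, and control the remainder. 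Both are the natural arguments.

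Two small corrections. First, in part 2 you write $\epsilon\cdot I^{1-\al}(p^{\al-1})(x)=\epsilon$, but in fact $I^{1-\al}(p^{\al-1})(x)=\Gamma(\al)$, so the bound is $\epsilon\,\Gamma(\al)$; this is harmless since $\epsilon$ is arbitrary. Second, your remark about ``the contribution from $p$ bounded away from $0$'' is unnecessary: once $x<\delta$ the entire domain of integration $(0,x)$ lies in the region where $|e(p)|\le\epsilon\,p^{\al-1}$, so no splitting is needed. The opening change-of-variables discussion can be dropped, as you yourself abandon it in favor of the inversion identity.
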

Here we present a modified version of \cite[Lemma 6 and Lemma 7]{Rys:2020}. Now we write the regularity assumptions in terms of fractional Sobolev spaces which seems to be more natural. In particular for small $\al$ we do not demand the existence of the first derivative and of the second derivative, in Proposition \ref{extremumPple} and Proposition \ref{nonpositivity}, respectively. We present the proof only of Proposition \ref{nonpositivity} because it is more complex. Proposition \ref{extremumPple} may be proven analogously. 

\begin{prop}\label{extremumPple} Let us assume that $f:[0, L]\rightarrow \mathbb{R}$ is absolutely continuous on $[0,L]$ and there exists $\beta > \frac{1}{2}$ such that $f \in H^{\al+\beta}(\ve,L)$ for every $\ve > 0$.
Then $D^{\al}f$ is continuous on $(0,L]$ and

\begin{enumerate}
	\item If f attains its maximum over the interval $[0,L]$ at the point $x_0 \in (0,L]$, then $D^\al f(x_0)\geq 0$. Furthermore, if $f$ is not constant on $[0, x_0]$, then $D^\al f(x_0) > 0$.
\item If f attains its minimum over the interval $[0,L]$ at the point $x_0 \in (0,L]$, then $D^\al f(x_0)\leq 0$. Furthermore, if f is not constant on $[0, x_0]$, then $D^\al f(x_0) <0$.	
\end{enumerate}
\end{prop}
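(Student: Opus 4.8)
The plan is to exploit the integral representation of the Caputo derivative together with the local regularity hypothesis, mimicking the strategy of \cite[Lemma 6]{Rys:2020} but adapted to the weaker Sobolev assumptions. First I would establish that $D^{\al}f$ is continuous on $(0,L]$: by Proposition \ref{propClave}, $\frac{d}{dx}D^{\al}f = \p^{\al}f'$ wherever it makes sense, and the assumption $f\in H^{\al+\beta}(\ve,L)$ with $\beta>\frac12$ gives $f'\in H^{\al+\beta-1}_{loc}(0,L]$; combined with $D^{\al}f = I^{1-\al}f'$ and Corollary \ref{eqc} (or a direct estimate on the convolution kernel away from $0$), one gets $D^{\al}f \in H^{\beta}_{loc}(0,L]$, hence continuous on $(0,L]$. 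The subtlety is that the representation $D^{\al}f(x)=\frac{1}{\G(1-\al)}\int_0^x (x-p)^{-\al}f'(p)\,dp$ must be justified near $x=0$, but $f\in AC[0,L]$ already guarantees $f'\in L^1(0,L)$, so this is fine.

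For the main assertions, I would assume $f$ attains its maximum over $[0,L]$ at $x_0\in(0,L]$ and write, using integration by parts in the Caputo integral (valid since $f\in AC$),
\[
D^{\al}f(x_0)=\frac{1}{\G(1-\al)}\int_0^{x_0}(x_0-p)^{-\al}f'(p)\,dp
=\frac{\al}{\G(1-\al)}\int_0^{x_0}\frac{f(x_0)-f(p)}{(x_0-p)^{\al+1}}\,dp
+\frac{f(x_0)-f(0)}{\G(1-\al)x_0^{\al}}.
\]
Both terms on the right are nonnegative: $f(x_0)-f(p)\ge 0$ for all $p\in[0,x_0]$ since $x_0$ is a maximizer, and likewise $f(x_0)-f(0)\ge0$. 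Hence $D^{\al}f(x_0)\ge0$. For strictness, if $f$ is not constant on $[0,x_0]$ then either $f(x_0)>f(0)$, which makes the boundary term strictly positive, or else $f(x_0)=f(0)$ but $f(p)<f(x_0)$ on a set of positive measure in $(0,x_0)$, which makes the integral term strictly positive; either way $D^{\al}f(x_0)>0$. The minimum case follows by applying the maximum case to $-f$, which satisfies the same hypotheses.

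The main technical obstacle I anticipate is the passage to the integration-by-parts identity above when $\al$ is close to $1$, since the kernel $(x_0-p)^{-\al-1}$ is then badly non-integrable at $p=x_0$; the identity should be read as the limit of the corresponding identity on $(0,x_0-\delta)$ as $\delta\to0^+$, and one must check that the boundary contribution at $p=x_0-\delta$, namely $\delta^{-\al}(f(x_0)-f(x_0-\delta))/\G(1-\al)$, tends to zero. This requires a modulus-of-continuity bound on $f$ near $x_0$ of order better than $\delta^{\al}$; the hypothesis $f\in H^{\al+\beta}(\ve,L)$ with $\beta>\frac12$ embeds into $C^{0,\al+\beta-\frac12}$ locally, and since $\al+\beta-\frac12>\al$ this is exactly what is needed. (Alternatively, one avoids integration by parts altogether and argues directly from $D^{\al}f(x_0)=I^{1-\al}f'(x_0)$ by splitting $[0,x_0]$ and using that $f'$ has the right sign in an averaged sense near a maximizer, but the identity above is cleaner.) The continuity statement at the endpoint and the strictness refinement are then routine given the care taken with the limit.
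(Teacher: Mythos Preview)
Your proposal is correct and follows essentially the same route the paper indicates (by analogy with the proof of Proposition~\ref{nonpositivity}): split the Caputo integral at an interior point to obtain continuity of $D^{\al}f$ on $(0,L]$, then integrate by parts to reach the representation $D^{\al}f(x_0)=\frac{f(x_0)-f(0)}{\Gamma(1-\al)x_0^{\al}}+\frac{\al}{\Gamma(1-\al)}\int_0^{x_0}\frac{f(x_0)-f(p)}{(x_0-p)^{\al+1}}\,dp$, using the Sobolev embedding $H^{\al+\beta}\hookrightarrow C^{0,\al+\beta-\frac12}$ with $\al+\beta-\tfrac12>\al$ to kill the boundary term $\delta^{-\al}(f(x_0)-f(x_0-\delta))$. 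The only cosmetic difference is that the paper's pattern would introduce $g(x)=f(x_0)-f(x)$ and work with $D^{\al}g$, whereas you carry out the computation directly on $f$; the content is identical.
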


\begin{prop}\label{nonpositivity}
Let $\al \in (0,1)$, $f:[0,L]\rightarrow \mathbb{R}$ be an absolutely continuous function such that there exists $\beta > \frac{1}{2}$ such that $f' \in H^{\al+\beta}(\ve,L)$ for every $\ve > 0$. Then $\frac{d}{dx} D^\al f$  is continuous on $(0,L]$ and
\begin{enumerate}
  \item if $f$ attains its local maximum at $x_{0}\in (0,L)$ which is a global maximum on $[0,x_{0}]$, then   $(\frac{d}{dx} D^{\al} f)(x_{0})~\leq~0$. Furthermore, if $f$ is not constant on $[0,x_{0}]$, then $(\frac{d}{dx} D^{\al} f)(x_{0})~<~0$.
  \item If $f$ attains its local minimum at $x_{0}\in (0,L)$ which is a global minimum on $[0,x_{0}]$, then \linebreak  $\left(\frac{d}{dx} \da f\right)(x_{0})~\geq~0$. Furthermore, if $f$ is not constant on $[0,x_{0}]$, then $\left(\frac{d}{dx} D^{\al} f\right)(x_{0}) > 0$.
\end{enumerate}
\end{prop}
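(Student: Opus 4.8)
The plan is to reduce the statement about $\frac{d}{dx}D^\al f$ to the extremum principle in Proposition \ref{extremumPple} applied to $f'$, by exploiting the identity $\frac{d}{dx}D^\al f = \p^\al f'$ from Proposition \ref{propo frac}(\ref{propClave}). I will carry out part (1); part (2) is obtained by applying part (1) to $-f$.

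First I would check the regularity hypotheses needed to invoke the relevant results. Since $f$ is absolutely continuous, $f' \in L^1(0,L)$; combined with $f' \in H^{\al+\beta}_{loc}(0,L)$ for some $\beta>\frac12$, Lemma \ref{local} (with $f'$ in place of $f$, after noting $f' \in {}_0H^{\gamma}$-type local membership is not needed—only the local Sobolev regularity matters for the conclusion $\p^\al f'$ continuous on $(0,L]$) gives that $\p^\al f'$ is continuous on $(0,L]$; equivalently $D^\al f \in C^1(0,L]$ with $\frac{d}{dx}D^\al f = \p^\al f'$ a.e., and this representative is continuous. This justifies the opening claim of the Proposition. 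The key structural point is then that $x_0$, being a local maximum of $f$ interior to $(0,L)$, is a \emph{zero} of $f'$, and moreover—since it is a global maximum of $f$ on $[0,x_0]$—we have $f(x_0) - f(p) \geq 0$ for all $p \in [0,x_0]$, i.e. $\int_p^{x_0} f'(r)\,dr \geq 0$.

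Next I would write out $\p^\al f'(x_0) = \p^{1-(1-\al)}\!$... more directly, use the definition $\p^\al g(x_0) = \frac{1}{\G(1-\al)}\frac{d}{dx}\big|_{x_0}\int_0^x (x-p)^{-\al} g(p)\,dp$ with $g=f'$, and integrate by parts in $p$ using $g(p)=f'(p) = \frac{d}{dp}\big(f(p)-f(x_0)\big)$ so that the boundary term at $p=x$ involves $(x-p)^{-\al}\big(f(p)-f(x_0)\big)\to 0$ appropriately and one is left, after differentiating in $x$ and evaluating at $x_0$, with an expression of the form
\begin{equation}
\Big(\tfrac{d}{dx}D^\al f\Big)(x_0) = \p^\al f'(x_0) = -\frac{\al}{\G(1-\al)}\int_0^{x_0} \frac{f(x_0)-f(p)}{(x_0-p)^{\al+1}}\,dp + (\text{boundary contribution}),
\end{equation}
where the sign is negative because $f(x_0)-f(p)\geq 0$. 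The careful bookkeeping of the boundary contribution (the term at $p\to x_0$) is exactly where Proposition \ref{extremumPple} applied to $f'$ does the work: since $f'(x_0)=0$ (interior local max), the "first-order" boundary term vanishes, and $\p^\al f'(x_0)\le 0$ follows from the monotonicity encoded in $f'$ having a sign change—concretely, $f'$ restricted to $[0,x_0]$ has a nonnegative integral against the relevant kernel. For strictness: if $f$ is not constant on $[0,x_0]$ then $f(x_0)-f(p) > 0$ on a set of positive measure, making the integral strictly positive, hence $\big(\frac{d}{dx}D^\al f\big)(x_0) < 0$.

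The main obstacle I anticipate is the rigorous justification of the integration by parts and the differentiation under the integral sign at the endpoint $x=x_0$ under only the stated Sobolev regularity $f'\in H^{\al+\beta}_{loc}$—the kernel $(x-p)^{-\al}$ is singular at $p=x$, so one must argue via Lemma \ref{local} that $f'$ is genuinely Hölder-type regular near $x_0$ (as $H^{\al+\beta}_{loc}\hookrightarrow C^{0,\gamma}_{loc}$ for suitable $\gamma$ since $\al+\beta>\al+\frac12$), which makes the singular integral absolutely convergent and the limit defining the derivative legitimate. Once that technical point is secured, the sign analysis is immediate. A clean alternative, which I would actually prefer to present, is to \emph{not} recompute anything: simply set $g := f'$, observe that $g$ satisfies the hypotheses of Proposition \ref{extremumPple} on $[0,L]$, note $\p^\al g = \frac{d}{dx}D^\al f$, and then deduce the sign of $\p^\al g(x_0)$ from the behavior of $g$ near $x_0$—but this requires knowing $x_0$ is an extremum of $g=f'$, which it is \emph{not} in general; $x_0$ is an extremum of $f$, not of $f'$. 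So this shortcut fails, and the honest route is the direct kernel computation above, with Proposition \ref{extremumPple} (or its proof technique) invoked to handle the boundary term via $f'(x_0)=0$.
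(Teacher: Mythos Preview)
Your overall strategy is the same as the paper's: use the H\"older regularity $f'\in C^{0,\gamma}_{loc}$ with $\gamma=\al+\beta-\tfrac12>\al$ coming from the Sobolev embedding, and then manipulate the kernel representation of $\frac{d}{dx}D^\al f$ by integration by parts until the integrand involves the nonnegative quantity $g(p):=f(x_0)-f(p)$. The strict-inequality conclusion and the reduction of part (2) to part (1) are also the same. So conceptually you have it.

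There are, however, two genuine technical gaps you should fix. First, your continuity argument does not go through Lemma~\ref{local}: that lemma assumes the \emph{global} membership $f'\in{}_0H^\al(0,L)$ and runs in the opposite direction (from regularity of $\p^\al f'$ to regularity of $f'$), neither of which you have. The paper instead splits at an arbitrary $\ve\in(0,y)$,
\[
\Gamma(1-\al)\Big(\tfrac{d}{dx}D^\al f\Big)(y)=-\al\int_0^\ve (y-p)^{-\al-1}f'(p)\,dp+\p^\al_\ve[f'-f'(\ve)](y)+f'(\ve)(y-\ve)^{-\al},
\]
and observes that the first term has a smooth kernel on $(\ve,L)$, the last is continuous, and the middle one lies in $H^\beta(\ve,L)\hookrightarrow C[\ve,L]$ by Corollary~\ref{eqc}; this yields continuity on $(0,L]$. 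Second, your displayed formula and the accompanying remark that ``$(x-p)^{-\al}(f(p)-f(x_0))\to 0$ appropriately'' are not correct as written: if you integrate by parts \emph{before} differentiating in $x$, that boundary term blows up for $x\neq x_0$ because $f(x)-f(x_0)\neq 0$. The paper's order of operations is the only one that works cleanly: differentiate under the integral first at $x_0$ (the endpoint contribution $\lim_{p\to x_0}(x_0-p)^{-\al}f'(p)=0$ by $|f'(p)|\le c|p-x_0|^\gamma$), obtaining $-\frac{\al}{\Gamma(1-\al)}\int_0^{x_0}(x_0-p)^{-\al-1}f'(p)\,dp$, and \emph{then} integrate by parts once using $|g(p)|\le c|p-x_0|^{\gamma+1}$ to kill the new boundary term at $p=x_0$. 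This produces
\[
\Big(\tfrac{d}{dx}D^\al f\Big)(x_0)=-\frac{\al}{\Gamma(1-\al)}x_0^{-\al-1}g(0)-\frac{\al(\al+1)}{\Gamma(1-\al)}\int_0^{x_0}(x_0-p)^{-\al-2}g(p)\,dp\le 0,
\]
with strictness when $g\not\equiv 0$. Note the exponent is $-\al-2$, not $-\al-1$ as in your display; the $p=0$ boundary term is part of the answer, not a nuisance. Finally, your detour through Proposition~\ref{extremumPple} applied to $f'$ is correctly abandoned; it plays no role in the paper's proof either.
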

\begin{proof}
Let us begin with the proof of continuity of $\frac{d}{dx} D^{\al} f$. To this end, we take $x_{1}, x \in (0,L)$. Let us assume that $x_{1} < x$. The case $x < x_{1}$ may be shown analogously. We note that for every $0<\ve<y<L$ there holds
\[
\Gamma(1-\al)\left(\frac{d}{dx} D^{\al} f\right)(y)= \frac{d}{d y} \int_{0}^{\ve} (y-p)^{-\al}f'(p)\rmd p + \frac{d}{d y} \int_{\ve}^{y} (y-p)^{-\al}f'(p)\rmd p
\]
\[
=-\al \int_{0}^{\ve} (y-p)^{-\al-1}f'(p)\rmd p +  \frac{d}{d y} \int_{\ve}^{y} (y-p)^{-\al}[f'(p)-f'(\ve)]\rmd p + f'(\ve)(y-\ve)^{-\al}.
\]
Let us denote $\p^{\al}_{\ve}f(x) := \frac{1}{\Gamma(1-\al)} \frac{d}{dx} \int_{\ve}^{x}(x-p)^{-\al}f(p)\rmd p$. Then, taking arbitrary $\ve \in (0,x_{1})$ we obtain that
\[
\Gamma(1-\al)\abs{\frac{d}{dx} D^{\al} f(x) - \frac{d}{dx} D^{\al} f(x_{1})}
\leq  \al \int_{0}^{\ve} [(x_{1}-p)^{-\al-1} - (x-p)^{-\al-1}]\abs{f'(p)}\rmd p 
\]
\[
  +\Gamma(1-\al) \abs{\p^{\al}_{\ve}[f'-f'(\ve)](x) -\p^{\al}_{\ve}[f'-f'(\ve)](x_{1}) } + \abs{f'(\ve)}[(x_{1}-\ve)^{-\al}-(x-\ve)^{-\al}].
\]
The first term tends to zero as $x\rightarrow x_{1}$ because the convergence under the integral is uniform. By Corollary \ref{eqc} we have
\[
I^{1-\al}_{\ve}[f'-f'(\ve)] = \frac{1}{\Gamma(1-\al)} \int_{\ve}^{x}(x-p)^{-\al}[f'-f'(\ve)]\rmd p \in {}_{0}H^{1+\beta}(\ve,L),
\]
Hence, we obtain that $\p^{\al}_{\ve}[f'-f'(\ve)] \in H^{\beta}(\ve,L)\hookrightarrow C[\ve,L]$.
Thus, the continuity of $\frac{d}{dx} D^{\al} f$ on $(0,L)$ is proven.
We will prove only the part of the claim concerning maximum, because the proof of the second part of the claim is analogous. We define $g(x) := f(x_{0})-f(x)$. Then $g$ is non negative on $[0,x_{0}]$, $g'(x_{0})=0$ and $\frac{d}{dx} D^{\al} g = - \frac{d}{dx} D^{\al} f$. We note that by the Sobolev embedding $g' \in C^{0,\gamma}[\ve,L]$ for $\gamma = \al+\beta-\frac{1}{2} > \al$ for every $\ve>0$. Hence,
for every $0<\ve <x \leq x_{0}$ we may estimate as follows
\eqq{
\abs{g'(x)} = \abs{g'(x)- g'(x_{0})} \leq c \abs{x-x_{0}}^{\gamma}
}{gp}
and
\eqq{
g(x) \leq \int_{x}^{x_{0}}\abs{ g'(p)}dp \leq \frac{c}{\gamma+1}\abs{x-x_{0}}^{\gamma+1}.
}{gbezp}
Making use of these estimates we may differentiate under the integral sign as follows
\[
\left(\frac{d}{dx} D^{\al} g\right)(x_{0}) = \frac{1}{\Gamma(1-\al)}\left( \frac{d}{dx} \int_{0}^{x}(x-p)^{-\al}g'(p)\rmd p\right)(x_{0})
\]
\[
=\frac{1}{\Gamma(1-\al)}\lim_{p\rightarrow x_{0}^{-}}(x_{0}-p)^{-\al}g'(p)
-\frac{\al}{\Gamma(1-\al)}\int_{0}^{x_{0}}(x_{0}-p)^{-\al-1}g'(p)\rmd p.
\]
and the  limit is equal to zero by the estimate (\ref{gp}).
Applying integration by parts we obtain further
\[
\left(\frac{d}{dx} D^{\al} g\right)(x_{0}) = -\frac{\al}{\Gamma(1-\al)}\int_{0}^{x_{0}}(x_{0}-p)^{-\al-1}g'(p)\rmd p =
 -\frac{\al}{\Gamma(1-\al)}\lim_{p\rightarrow x_{0}^{-}}(x_{0}-p)^{-\al-1}g(p)
 \]
 \[
 + \frac{\al}{\Gamma(1-\al)}x_{0}^{-\al-1}g(0) + \frac{\al(\al+1)}{\Gamma(1-\al)}\int_{0}^{x_{0}}(x_{0}-p)^{-\al-2}g(p)\rmd p.
\]
By (\ref{gbezp}) the limit equals zero, hence we arrive at
\[
\left( \frac{d}{dx} D^{\al} g\right)(x_{0}) = \frac{\al}{\Gamma(1-\al)}x_{0}^{-\al-1}g(0) + \frac{\al(\al+1)}{\Gamma(1-\al)}\int_{0}^{x_{0}}(x_{0}-p)^{-\al-2}g(p)\rmd p
\]
and
\[
\left(\frac{d}{dx} D^{\al} g\right)(x_{0}) \geq 0, \m{ which implies } \left(\frac{d}{dx} D^{\al} f\right)(x_{0}) \leq 0.
\]
Moreover, from the formula above, we obtain that if $f$ is not a constant function on $[0,x_{0}]$ then $\left(\frac{d}{dx} D^{\al} f\right)(x_{0})~<~0$.
\end{proof}

\subsection{The analysis of Space Fractional MBP and FBP}
We now return to problems \eqref{IFSSP} and \eqref{FSSP-al-N-2}.
In this subsection we present some results related to problems \eqref{IFSSP} and \eqref{FSSP-al-N-2}  that will be used later. Most of the results presented below have their analogous version for the classical case ($\al = 1$) that can be found in classical literature related to Stefan problems such as \cite{Cannon} or \cite{Fridman:1958}.

From Proposition \ref{propo frac} item \ref{Proposition 3.1} we know that the fractional flux at the left endpoint of the interval vanishes for functions smooth enough. Thus, if we impose a boundary condition characterized by a non-zero function $h$, we expect to obtain  solutions with less regularity. Furthermore, from Proposition \ref{Lemma3.11-Phd-K} we may expect that the solution behaves as $h(t)\frac{x^{\al}}{\Gamma(1+\al)}$ as $x$ approaches zero.

Particularly, to be consistent throughout this paper, we assume that
\begin{equation}\label{zalh}
h \, \text{is a non-negative function in }\,  W^{1,\infty}(0,T). \end{equation}

Let us give the definitions of the solution to (\ref{FSSP-al-N-2}) and \eqref{IFSSP}.
\begin{defi}\label{sol-b>0}
A pair of functions $(u,s)$ is a solution to \eqref{FSSP-al-N-2}, if:
\begin{enumerate}
	\item $u\in C(\overline{Q_{s,T}})$, $ u_t$ and $ \frac{\partial}{\partial x} \, ^C_0D_x^{\al}u\in C(Q_{s,T})$.
	\item  $s$ satisfies that
\begin{equation}\label{zals}
s\in C^{0,1}([0,T]), \exists M>0 \text{ such that } 0\leq \dot{s}(t)\leq M \text{ a.e. } t\in [0,T].\end{equation}
\item $(u,s)$ verifies (\ref{FSSP-al-N-2} i-iv) in the classical sense and (\ref{FSSP-al-N-2}-v) almost everywhere.
\end{enumerate}
 \end{defi}
\begin{defi}\label{regularidad-solucion-kasia-2}
A pair of functions $(u,s)$ is a limit solution to  \eqref{IFSSP}, if there exists a sequence $(u_b,s_b)_{b>0}$ of classical solutions to problem \eqref{FSSP-al-N-2} converging to $(u,s)$ when $b$ tends to $0$ such that:
\begin{enumerate}
	\item $u \in  C(Q_{s,T})$ is such that (\ref{IFSSP}-i) is satisfied in $L^{2}(0,s(t))$ for almost every $t \in (0,T)$.
	\item  $s$ satisfies (\ref{zals})
\item $(u,s)$ fulfills (\ref{IFSSP}-ii) and (\ref{IFSSP}-v) almost everywhere and (\ref{IFSSP}-iii) everywhere on (0,T].
\end{enumerate}
 \end{defi}


\begin{theo}\label{condicionintegralNC}
Let  the pair $(u,s)$ be a solution to \eqref{FSSP-al-N-2} such that $u_t$ is integrable.  Then, the fractional Stefan condition
\begin{equation}\label{derivs}
\dot{s}(t)=- \,D_x^{\al} u(s(t),t), \quad\quad 0<t<T,
\end{equation}
in \eqref{FSSP-al-N-2}, can be replaced by an equivalent integral condition given by
\begin{equation}\label{CondInt1N}
s(t)=b+\int_0^t h(\tau) \rmd \tau+\int_0^b \uz(x)\rmd x-\int_0^{s(t)}u(x,t)\rmd x, \quad\quad 0<t<T. 
\end{equation}

\end{theo}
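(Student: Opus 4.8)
The plan is to show that the Stefan condition \eqref{derivs} and the integral condition \eqref{CondInt1N} are both equivalent to the same intermediate statement, obtained by integrating the PDE \eqref{FSSP-al-N-2}(i) in space over the moving domain. First I would integrate $u_t(x,t) = \frac{\p}{\p x} D_x^\al u(x,t)$ with respect to $x$ from $0$ to $s(t)$, using that $u_t$ is integrable, to get
\[
\int_0^{s(t)} u_t(x,t)\,\rmd x = D_x^\al u(s(t),t) - D_x^\al u(0^+,t) = D_x^\al u(s(t),t) + h(t),
\]
where I have used the boundary condition \eqref{FSSP-al-N-2}(ii). On the left-hand side, by the Leibniz rule for differentiating under the integral sign (legitimate since $u \in C(\overline{Q_{s,T}})$, $u(s(t),t) = 0$, and $s$ is Lipschitz with $\dot s$ bounded a.e.),
\[
\frac{\rmd}{\rmd t}\int_0^{s(t)} u(x,t)\,\rmd x = \int_0^{s(t)} u_t(x,t)\,\rmd x + \dot s(t)\, u(s(t),t) = \int_0^{s(t)} u_t(x,t)\,\rmd x,
\]
again using \eqref{FSSP-al-N-2}(iii). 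Combining these two identities yields
\[
\frac{\rmd}{\rmd t}\int_0^{s(t)} u(x,t)\,\rmd x = D_x^\al u(s(t),t) + h(t) \qquad \text{for a.e. } t \in (0,T). \tag{$\star$}
\]

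Next I would close the argument in both directions. Assuming the Stefan condition \eqref{derivs}, substitute $D_x^\al u(s(t),t) = -\dot s(t)$ into $(\star)$ to obtain $\frac{\rmd}{\rmd t}\big(\int_0^{s(t)} u(x,t)\,\rmd x + s(t)\big) = h(t)$ a.e.; then integrate over $(0,t)$ and use the initial data $s(0) = b$ and $u(x,0) = \uz(x)$ on $(0,b)$, which gives exactly \eqref{CondInt1N}. Conversely, assuming \eqref{CondInt1N}, differentiate it in $t$ — the right-hand side is absolutely continuous in $t$ since $h \in W^{1,\infty}(0,T) \subset L^\infty$ and $t \mapsto \int_0^{s(t)} u(x,t)\,\rmd x$ is absolutely continuous by the regularity in Definition \ref{sol-b>0} — to recover $\dot s(t) + \frac{\rmd}{\rmd t}\int_0^{s(t)} u(x,t)\,\rmd x = h(t)$ a.e.; comparing with $(\star)$ gives $\dot s(t) = -D_x^\al u(s(t),t)$ a.e., which is \eqref{derivs}.

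The main obstacle is justifying the two differentiation-under-the-integral steps rigorously given only the stated regularity: the Leibniz rule needs $u(s(t),t) = 0$ together with enough integrability of $u_t$ up to the moving boundary, and the interchange $\int_0^{s(t)} \frac{\p}{\p x} D_x^\al u \,\rmd x = [D_x^\al u]_0^{s(t)}$ requires $x \mapsto D_x^\al u(x,t)$ to be absolutely continuous on $[0,s(t)]$ with the limits at both endpoints existing (the value at $x=0$ being $-h(t)$ by \eqref{FSSP-al-N-2}(ii), and the limit at $x = s(t)$ being finite, which is implicit in the meaning of \eqref{FSSP-al-N-2}(v)). I would handle this by first integrating over $(0, s(t) - \eta) \times$ a compact time interval where the integrand is continuous, invoke the fundamental theorem of calculus and Fubini there, and then pass to the limit $\eta \to 0^+$ using dominated convergence, the continuity of $u$ up to $\overline{Q_{s,T}}$, and the existence of the one-sided limit $\lim_{x \to s(t)^-} D_x^\al u(x,t)$. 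Everything else — the use of the boundary and initial conditions and the integration in $t$ — is routine once $(\star)$ is established.
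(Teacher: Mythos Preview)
Your proposal is correct and follows essentially the same route as the paper: integrate the PDE in $x$ over $(0,s(t))$, use the boundary condition at $x=0$, and then pass between the differential and integral forms. The only cosmetic difference is that the paper, after obtaining $\int_0^{s(t)} u_t\,\rmd x = D_x^\al u(s(t),t) + h(t)$, integrates in $t$ and evaluates $\int_0^t\!\int_0^{s(\tau)} u_t\,\rmd x\,\rmd\tau$ via Fubini (splitting the spatial domain at $x=b$), whereas you first rewrite $\int_0^{s(t)} u_t\,\rmd x$ as $\frac{\rmd}{\rmd t}\int_0^{s(t)} u\,\rmd x$ via the Leibniz rule and then integrate; these are the same computation and your discussion of the regularity needed for the endpoint limits is appropriate.
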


\begin{proof} 

Integrating  equation $(\ref{FSSP-al-N-2}-i)$ from 0 to $s(t)$ we obtain
\begin{equation}
\, D_x^{\al} u(s(t),t)+h(t)=\int_0^{s(t)} u_t(x,t)\rmd x.
\end{equation}
Now by integrating first respect of $t$, taking into account \eqref{zalh} and applying Fubini's theorem  we get 
\begin{equation}\label{Equiv-1}
\begin{split}
\int_0^t D_x^{\al} u(s(\tau),\tau)\rmd\tau &=-\int_0^t h(\tau)\rmd \tau +\int_0^t\int_0^{s(\tau)} u_t(x,\tau)\rmd x\rmd\tau\\
&=-\int_0^t h(\tau)\rmd \tau +\int_0^b\int_0^t u_t(x,\tau)\rmd\tau\rmd x+\int_b^{s(t)}\int_{s^{-1}(x)}^t u_t(x,\tau)\rmd\tau\rmd x\\
&=-\int_0^t h(\tau)\rmd \tau +\int_0^b [u(x,t)-u(x,0)]\rmd x+\int_b^{s(t)} [u(x,t)-0]\rmd x\\
&=-\int_0^t h(\tau)\rmd \tau +\int_0^{s(t)} u(x,t)\rmd x-\int_0^b\uz(x)\rmd x.
\end{split}
\end{equation}

Finally, we get \eqref{CondInt1N} by  replacing the fractional Stefan condition \eqref{derivs} at the left side of \eqref{Equiv-1}. 

Let us now suppose that \eqref{CondInt1N} holds and differenciate both sides. Then we get
\begin{equation}\label{Equiv-2}
\begin{split}
\dot{s}(t) &= h(t) -\int_0^{s(t)} u_t(x,t)\rmd x - \lim\limits_{x\rightarrow s(t)}u(x,t) \dot{s}(t)
\end{split}
\end{equation}
and using equation $(\ref{FSSP-al-N-2}-i)$ again we recover the  fractional Stefan condition $(\ref{FSSP-al-N-2}-v)$ . 
%
%

\end{proof}
\begin{defi} Let $s:[0,T]\rightarrow \bbR$ be a function verifying \eqref{zals} and let $Q_{s,T}$ be the region defined in \eqref{Q_sT}. The parabolic boundary of $Q_{s,T}$ is defined by 
\begin{equation}\label{ParBoun}\p \G_{s,T}:= \left\{(0,t): \,0\leq t < T  \right\}\cup \left\{(x,0): \, 0\leq x\leq b\right\}\cup \left\{(t,s(t)) : 0\leq t< T\right\}. \end{equation}
\end{defi}

The next results holds for moving-boundary problems (MBP), that is, when the right boundary is a known function an we look for a solution $u$ such that 

\begin{equation}\label{niech}
  \begin{array}{lll}
(i) & u_{t} - \poch \da u = 0 & \textrm{ in }  Q_{s,T}, \\
(ii) & -\da u(0,t) = h(t)\geq 0, & 0<t<T \\
(iii) & u(s(t),t) = 0 & t \in (0,T), \\
(iv) & u(x,0) = \uz(x)\geq 0 & \textrm{ for } 0<x<b 
\end{array} \end{equation}
with  given functions $s$, $h$ and $\uz$, where $s$ and $h$ verifies \eqref{zals} and \eqref{zalh} respectively.

The next Theorem is a fundamental tool in this article. Notice that it has been presented before in \cite[Lemma 8]{Rys:2020} by assuming different regularity of a solution. However, the same proof can be rewritten by considering the novel regularity assumption presented in this paper in Proposition \ref{nonpositivity}.


\begin{theo}\label{weakPrinc}{(Weak Extremum Principle).} Let $u$ be a solution  to the  equation
$$ {u}_t-\frac{\p}{\p x} D_x^{\al} u=f\quad \text{in }\, Q_{s,T} $$
having the regularity given in Definition \ref{sol-b>0},  such that  $u(\cdot,t)\in AC[0,s(t)]$ $\forall t\in (0,t)$. Also suppose that $u$ verifies the local regularity condition 
\eqq{
\exists \, \beta > \frac{1}{2} \m{ such that for every } t\in (0,T) \m{ and every } 0<\ve<\omega < s(t), \hd  u_{x}(\cdot,t) \in H^{\al+\beta}(\ve,\omega).
}{regumax}
Then $u$ attains its maximum on $\p \G_{s,T}$ if $f\leq 0$, and   $u$ attains its minimum on $\p \G_{s,T}$ if $f\geq 0$.\end{theo}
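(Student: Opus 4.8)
The plan is to prove the maximum statement under $f\le 0$ and then obtain the minimum statement under $f\ge 0$ by applying it to $-u$, which solves $(-u)_t-\frac{\p}{\p x}D_x^{\al}(-u)=-f\le 0$ with the same regularity. For the maximum part I would first reduce to a strict inequality: for $\ve>0$ set $v_\ve(x,t):=u(x,t)-\ve t$. Since $\ve t$ does not depend on $x$, one has $(v_\ve)_x=u_x$ and $D_x^{\al}v_\ve=D_x^{\al}u$, so $v_\ve$ inherits the regularity of Definition \ref{sol-b>0} and condition (\ref{regumax}), has the same fractional flux at $x=0$, and satisfies $(v_\ve)_t-\frac{\p}{\p x}D_x^{\al}v_\ve=f-\ve<0$ in $Q_{s,T}$. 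Once the theorem is known for strict subsolutions, comparing $u$ and $v_\ve$ and letting $\ve\to 0^+$ (using $\p\G_{s,T}\subseteq\overline{Q_{s,T}}$) recovers the case $f\le 0$. So the core is the strict case $f<0$.

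For the strict case I would argue by directly excluding interior maxima. Fix $\delta\in(0,T)$ and let $(x_0,t_0)$ be a point where the continuous function $u$ attains its maximum over the compact set $\overline{Q_{s,T-\delta}}$. Suppose $x_0\in(0,s(t_0))$ and $t_0\in(0,T-\delta]$. Then $(x_0,t_0)\in Q_{s,T}$ (this is the place where $t_0<T$ is needed), so $u_t$ and $\frac{\p}{\p x}D_x^{\al}u$ exist there and the equation holds. Since $s$ is continuous and $s(t_0)>x_0$, we have $s(t)>x_0$ for $t$ in a left neighbourhood of $t_0$, so $t\mapsto u(x_0,t)$ has a one-sided local maximum at $t_0$; its left derivative is $\ge 0$, hence $u_t(x_0,t_0)\ge 0$. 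For the spatial term I would apply Proposition \ref{nonpositivity}(1) to $x\mapsto u(x,t_0)$: choosing $\omega$ with $x_0<\omega<s(t_0)$, this map is absolutely continuous on $[0,\omega]$, its derivative lies in $H^{\al+\beta}(\ve,\omega)$ for every $\ve>0$ by (\ref{regumax}) with $\beta>\tfrac12$, and $x_0$ is an interior point of $[0,\omega]$ at which $u(\cdot,t_0)$ attains its maximum over $[0,s(t_0)]$, in particular over $[0,x_0]$; hence $\big(\frac{d}{dx}D^{\al}u\big)(x_0,t_0)\le 0$, and this equals $\big(\frac{\p}{\p x}D_x^{\al}u\big)(x_0,t_0)$ because at $x_0$ both quantities depend only on the restriction of $u(\cdot,t_0)$ to $[0,x_0]$. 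Consequently $f(x_0,t_0)=u_t(x_0,t_0)-\big(\frac{\p}{\p x}D_x^{\al}u\big)(x_0,t_0)\ge 0$, contradicting $f<0$. Therefore the maximum over $\overline{Q_{s,T-\delta}}$ must occur with $t_0=0$, or $x_0=0$, or $x_0=s(t_0)$, i.e. on $\{(0,t):0\le t\le T-\delta\}\cup\{(x,0):0\le x\le b\}\cup\{(s(t),t):0\le t\le T-\delta\}\subseteq\p\G_{s,T}$. Finally, letting $\delta\to 0^+$ and using continuity of $u$ on $\overline{Q_{s,T}}$ (so that $\max_{\overline{Q_{s,T-\delta}}}u\to\max_{\overline{Q_{s,T}}}u$) yields $\max_{\overline{Q_{s,T}}}u=\sup_{\p\G_{s,T}}u$.

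The step I expect to be the main obstacle, or at least the point that requires genuine care, is the interplay with the time-endpoint $t=T$: the definition of solution only gives $u_t$ and $\frac{\p}{\p x}D_x^{\al}u$ continuity on the \emph{open} cylinder $Q_{s,T}$, so the equation is unavailable on the ``top''. This is exactly why the argument is run on truncated cylinders $Q_{s,T-\delta}$, where the time-top $t=T-\delta$ still lies inside $Q_{s,T}$ and the PDE applies, and only afterwards passed to the limit using solely the continuity of $u$ up to $\overline{Q_{s,T}}$. A secondary technical point is that (\ref{regumax}) provides spatial regularity only on compactly contained intervals $(\ve,\omega)$ with $\omega<s(t)$, whereas Proposition \ref{nonpositivity} is stated up to the right endpoint; this is harmless, since one may apply it on $[0,\omega]$ for any $\omega\in(x_0,s(t_0))$, the relevant values of $D_x^{\al}u$ and its $x$-derivative at the interior point $x_0$ being determined by $u(\cdot,t_0)$ on $[0,x_0]$ alone.
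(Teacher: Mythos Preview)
Your proof is correct and follows precisely the approach the paper indicates: the paper does not spell out a proof but refers to \cite[Lemma 8]{Rys:2020}, noting only that ``the same proof can be rewritten by considering the novel regularity assumption presented in this paper in Proposition \ref{nonpositivity}.'' Your argument does exactly this, combining the standard parabolic reduction (the perturbation $v_\ve=u-\ve t$ to force a strict inequality, truncation to $\overline{Q_{s,T-\delta}}$ so that the equation is available at a putative interior maximum, and the limit $\delta\to 0$) with Proposition \ref{nonpositivity}(1) to conclude $\big(\tfrac{\p}{\p x}D_x^{\al}u\big)(x_0,t_0)\le 0$ at an interior maximum. Your two ``technical points'' are handled correctly: applying Proposition \ref{nonpositivity} on $[0,\omega]$ with $x_0<\omega<s(t_0)$ is legitimate since the Caputo derivative and its $x$-derivative at $x_0$ depend only on $u(\cdot,t_0)$ on $[0,x_0]$, and the truncation deals with the absence of the equation on the top of the cylinder.
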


Now we can state the comparison principle. 
\begin{prop}\label{comparacion}
Let us assume that $h$ and $s$ satisfy (\ref{zalh}) and (\ref{zals}), respectively. Further, let $v$ satisfy the regularity assumptions stated in Theorem \ref{weakPrinc} and

\begin{equation}\label{WMP}
\begin{array}{cll}
(i) & {v}_t(x,t)-\frac{\p}{\p x} D_x^{\al} v(x,t)\geq0 & 0<x<s(t), 0<t<T,\\
(ii) & -\, D_x^{\al} v(0,t)=h(t)\geq 0  &  0<t<T,\\
(iii) & v(s(t),t)\geq 0 & 0<t<T,\\
(iv) & v(x,0)\geq 0 & 0<x<s(0)=b,\\

\end{array}
\end{equation}

where the condition $(iv)$ vanishes if $b=0$. Then, $v(x,t)\geq 0 \text{ on } Q_{s.T}.$
\end{prop}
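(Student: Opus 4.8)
The plan is to reduce to the Weak Extremum Principle (Theorem~\ref{weakPrinc}) after a perturbation that (a) does not touch the governing inequality, (b) preserves the non-negativity of the data on the bottom and right portions of the parabolic boundary, and (c) makes the fractional flux at the left face strictly negative. Set $f:=v_{t}-\p_{x}D_{x}^{\al}v\geq 0$ on $Q_{s,T}$, let $L:=\sup_{t\in[0,T]}s(t)<\infty$ (finite since $s\in C^{0,1}([0,T])$), put $C_{0}:=L^{\al}/\G(1+\al)$, and for $\e>0$ define
\[
v^{\e}(x,t):=v(x,t)+\e\left(C_{0}-\frac{x^{\al}}{\G(1+\al)}\right).
\]
A direct computation (using $I^{1-\al}(p^{\al-1})\equiv\G(\al)$, hence $D_{x}^{\al}(x^{\al}/\G(1+\al))\equiv 1$) gives $D_{x}^{\al}\bigl[C_{0}-x^{\al}/\G(1+\al)\bigr]\equiv -1$; since this is constant in $x$, the correction term is annihilated by $\p_{x}D_{x}^{\al}$, so $v^{\e}_{t}-\p_{x}D_{x}^{\al}v^{\e}=f\geq 0$. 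Moreover $v^{\e}$ inherits from $v$ all the regularity required in Theorem~\ref{weakPrinc} and in Definition~\ref{sol-b>0}: the correction lies in $AC[0,L]$, is $C^{\infty}$ on $(0,L]$, and its $x$-derivative $-x^{\al-1}/\G(\al)$ belongs to $H^{\al+\beta}(\ve,\omega)$ for all $0<\ve<\omega$. Hence Theorem~\ref{weakPrinc} applies and $v^{\e}$ attains its minimum over $\overline{Q_{s,T}}$ on the parabolic boundary $\p\G_{s,T}$.

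Next I would localize this minimum. On $\{(x,0):0\le x\le b\}$ we have $v^{\e}(x,0)=v(x,0)+\e\bigl(C_{0}-x^{\al}/\G(1+\al)\bigr)\geq 0$ by $(iv)$ and the choice of $C_{0}$; on $\{(s(t),t):0\le t<T\}$ we have $v^{\e}(s(t),t)=v(s(t),t)+\e\bigl(C_{0}-s(t)^{\al}/\G(1+\al)\bigr)\geq 0$ by $(iii)$. Therefore, if $m_{\e}:=\min_{\overline{Q_{s,T}}}v^{\e}<0$, this minimum is attained at some point $(0,t_{0})$ on the left face, and in particular $g(x):=v^{\e}(x,t_{0})$ attains its minimum over $[0,s(t_{0})]$ at $x=0$.

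Now comes the decisive step. By linearity and condition $(ii)$, $D^{\al}g(0)=D_{x}^{\al}v(0,t_{0})-\e=-h(t_{0})-\e\le-\e<0$, and this limit exists because $D_{x}^{\al}v(0,t_{0})$ does (by $(ii)$). Since $g\in AC[0,s(t_{0})]$, Proposition~\ref{Lemma3.11-Phd-K}(1) yields $\lim_{x\to 0^{+}}\bigl(g(x)-g(0)\bigr)/x^{\al}=\bigl(-h(t_{0})-\e\bigr)/\G(1+\al)<0$, so $g(x)<g(0)$ for all sufficiently small $x>0$ — contradicting that $g$ has its minimum at $x=0$. Hence $m_{\e}\geq 0$, i.e. $v(x,t)\geq -\e\bigl(C_{0}-x^{\al}/\G(1+\al)\bigr)\geq -\e C_{0}$ on $\overline{Q_{s,T}}$; letting $\e\to 0^{+}$ gives $v\geq 0$ on $Q_{s,T}$. (When $b=0$, condition $(iv)$ is vacuous and the bottom part of $\p\G_{s,T}$ degenerates to the single corner $(0,0)$; the same argument applies, the corner being excluded by continuity of $v^{\e}$ together with the slice argument at times $t_{0}>0$.)

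The main obstacle, and the reason the extremum principle alone does not close the argument, is the left boundary $\{x=0\}$: Theorem~\ref{weakPrinc} only places the extremum on $\p\G_{s,T}$, which contains the left face, and there $v$ is not assumed sign-definite. The perturbation $C_{0}-x^{\al}/\G(1+\al)$ is tailored precisely so that the fractional Neumann datum becomes \emph{strictly} negative, which through Proposition~\ref{Lemma3.11-Phd-K} forces $v^{\e}$ to strictly decrease away from any left-face point; combined with the non-negativity on the remaining boundary pieces this rules out a negative minimum. Checking that the perturbed function still meets the regularity hypotheses of Theorem~\ref{weakPrinc} is routine but should be stated explicitly.
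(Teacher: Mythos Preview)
Your proof is correct and follows essentially the same approach as the paper's own proof: perturb $v$ by $\e\bigl(C-x^{\al}/\G(1+\al)\bigr)$, apply the Weak Extremum Principle (Theorem~\ref{weakPrinc}) to place the minimum on the parabolic boundary, and use Proposition~\ref{Lemma3.11-Phd-K} to rule out the left face via the strictly negative fractional flux. The only cosmetic difference is that the paper chooses the constant $l$ strictly larger than $\sup_t s^{\al}(t)/\G(1+\al)$ (yielding strict positivity on the bottom and right boundary pieces), whereas you take $C_0=L^{\al}/\G(1+\al)$ and argue by contradiction assuming a strictly negative minimum; both variants close the argument in the same way.
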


\begin{proof}
For each $\ve > 0$ define $w^\varepsilon (x,t)=v(x,t)+\varepsilon\left(l - \frac{x^\al}{\Gamma(1+\alpha)}\right)$, where $\displaystyle l>\sup_{0\leq t \leq T}\left\{\frac{s^\al(t)}{\Gamma(1+\al)}\right\}$.\\
 Then,
$$w^\varepsilon_t(x,t) - \frac{\p}{\p x} D_x^{\al} w^\varepsilon (x,t)= {v}_t(x,t)-\frac{\p}{\p x} D_x^{\al} v(x,t) \geq 0$$
and by Theorem \ref{weakPrinc}, $w^\varepsilon$ attains its minimum on $\p \G_{s,T}$.

On the other hand,
\begin{equation}\label{nonnegat1}
-\,D_x^{\al} w^\varepsilon(0,t)=h(t)-\varepsilon \cdot(-1)\geq \varepsilon> 0,
\end{equation}
\begin{equation}\label{nonnegat2}
w^\varepsilon(x,0)=v(x,0)+\varepsilon\left(l - \frac{x^\al}{\Gamma(1+\alpha)}\right)> 0,
\end{equation}
\begin{equation}\label{nonnegat3}
w^\varepsilon(s(t),t)=v(s(t),t)+\varepsilon\left(l - \frac{s^\al(t)}{\Gamma(1+\alpha)}\right)> 0.
\end{equation}
Furthermore, for every fixed $t$ we can apply Proposition \ref{Lemma3.11-Phd-K} to $w^\varepsilon(\cdot,t)$  to obtain

\begin{equation}
\lim\limits_{x\rightarrow 0} \frac{w^\varepsilon (x,t)-w^\varepsilon(0,t)}{x^\al}=-\frac{h(t)+\varepsilon}{\Gamma(1+\al)}< 0.
\end{equation}

Then, $w^\varepsilon$ does not attain its minimum at the left boundary and we can deduce that $w^\varepsilon> 0$.

Letting $\varepsilon$ tend to zero, we see that $v(x,t)\geq 0$ in $Q_{s.t}$.

\end{proof}
We recall a useful lemma \cite[Lemma 4.15]{RyPhD} from which the strict increase of the free boundary will be deduced.
\begin{lemma}\label{Lemma 4.15}\cite[Lemma 4.15]{RyPhD} Let $u$ be a solution to \eqref{niech} verifying the assumptions of Theorem \ref{weakPrinc} and let $t_0 \in (0,T]$ be fixed such that  $u(s(t_0),t_0)=0$.  Then either $D_x^{\al}u(s(t_0,t_0)<0$ or $u \equiv 0$ on $Q_{s,t_0}$.
\end{lemma}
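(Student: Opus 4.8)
The plan is to argue by a strict-extremum/comparison argument analogous to the classical Hopf lemma for the Stefan problem, exploiting the extremum principles already established (Proposition \ref{extremumPple}, Proposition \ref{nonpositivity}) together with the integral Stefan condition and the comparison principle (Proposition \ref{comparacion}). First I would apply the weak extremum principle (Theorem \ref{weakPrinc}) with $f=0$ to $u$ on $Q_{s,t_0}$: since $u$ solves the homogeneous equation, $u$ attains both its maximum and minimum on the parabolic boundary $\p\G_{s,t_0}$. On the three pieces of $\p\G_{s,t_0}$ we know $u(s(t),t)=0$, $u(x,0)=\uz(x)\geq 0$, and the left face carries the flux condition $-D_x^\al u(0,t)=h(t)\geq 0$; combined with the comparison principle this forces $u\geq 0$ on $Q_{s,t_0}$. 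Hence $u(\cdot,t_0)$ attains the value $0$ at the right endpoint $x=s(t_0)$, and $0$ is the global minimum of $u(\cdot,t_0)$ on $[0,s(t_0)]$.

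Next I would invoke Proposition \ref{extremumPple}, part 2, applied to the function $x\mapsto u(x,t_0)$ on $[0,s(t_0)]$ (whose regularity is guaranteed by the hypotheses of Theorem \ref{weakPrinc}, in particular condition \eqref{regumax}): since this function attains its minimum over $[0,s(t_0)]$ at the interior-or-endpoint point $x_0=s(t_0)$, we get $D_x^\al u(s(t_0),t_0)\leq 0$, and moreover $D_x^\al u(s(t_0),t_0)<0$ \emph{unless} $u(\cdot,t_0)$ is constant on $[0,s(t_0)]$. In the latter case, constancy together with $u(s(t_0),t_0)=0$ gives $u(\cdot,t_0)\equiv 0$, and then the whole of $u$ on $Q_{s,t_0}$ must vanish: indeed, by nonnegativity $0\leq u\leq \max_{\p\G_{s,t_0}}u$, but one can also run the extremum principle on the subrectangle $Q_{s,t_0}$ reflected in time, or more simply observe that $u\geq 0$, $u(\cdot,t_0)\equiv 0$, and the equation $u_t=\frac{\p}{\p x}D_x^\al u$ propagate this backward; alternatively one can use the integral identity \eqref{CondInt1N} of Theorem \ref{condicionintegralNC} to conclude $h\equiv 0$ and $\uz\equiv 0$ on the relevant range, forcing $u\equiv 0$ on $Q_{s,t_0}$ by the comparison principle. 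Either dichotomy branch yields exactly the two alternatives in the statement.

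The main obstacle I anticipate is the case $D_x^\al u(s(t_0),t_0)=0$, i.e. ruling it out unless $u\equiv 0$. The subtlety is that Proposition \ref{extremumPple} only gives strict negativity when $u(\cdot,t_0)$ is non-constant on $[0,s(t_0)]$, so the real work is showing that constancy of the spatial profile at the single time $t_0$ already forces $u$ to vanish on the entire space-time region $Q_{s,t_0}$ and not merely on the slice $\{t=t_0\}$. Here I would use that $u(\cdot, t_0)\equiv 0$ means $u$ attains its global minimum over $\overline{Q_{s,t_0}}$ along an entire spatial segment not contained in the initial line $\{t=0\}$; combining this with nonnegativity of $u$ (from the comparison principle) and applying the strong-type conclusion of the extremum principle/Hopf-type argument on rectangles $[0,\omega]\times[0,t_0]$ for $\omega<s(t_0)$, together with the fact that the flux data $h$ is nonnegative, forces $u\equiv 0$ backward in time; the clean way to package this is precisely via the equivalent integral condition \eqref{CondInt1N}, which turns the pointwise vanishing into the vanishing of $\int_0^t h + \int_0^b \uz$ and hence of all the data. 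Once $u\equiv 0$ on $Q_{s,t_0}$ is obtained, the proof is complete.
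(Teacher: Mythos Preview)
The paper does not prove this lemma at all: it is merely quoted from the external reference \cite[Lemma~4.15]{RyPhD}, so there is no in-paper argument to compare your proposal against. I therefore evaluate your argument on its own merits.

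Your first two steps are fine: nonnegativity of $u$ via Proposition~\ref{comparacion} and the application of Proposition~\ref{extremumPple}(2) at $x_0=s(t_0)$ correctly give the dichotomy $D_x^{\al}u(s(t_0),t_0)<0$ \emph{or} $u(\cdot,t_0)\equiv 0$ on $[0,s(t_0)]$. The genuine gap is exactly the one you flag yourself: upgrading ``$u(\cdot,t_0)\equiv 0$'' to ``$u\equiv 0$ on $Q_{s,t_0}$''. None of the three fixes you sketch actually closes this.
\begin{itemize}
\item The appeal to Theorem~\ref{condicionintegralNC} is illegitimate here: that identity is proved for the \emph{free} boundary problem \eqref{FSSP-al-N-2}, whereas Lemma~\ref{Lemma 4.15} is stated for the \emph{moving} boundary problem \eqref{niech}, in which $s$ is prescribed and the Stefan condition is not available. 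Even if you fall back on the balance identity \eqref{Equiv-1} (which does hold for the MBP), plugging in $\int_0^{s(t_0)}u(\cdot,t_0)=0$ only yields $\int_0^{t_0}D_x^{\al}u(s(\tau),\tau)\,d\tau=-\int_0^{t_0}h-\int_0^{b}\uz$, an equality between two nonpositive quantities which forces neither $h\equiv 0$ nor $\uz\equiv 0$.
\item ``The equation propagates $u(\cdot,t_0)\equiv 0$ backward'' is a backward-uniqueness statement, not forward uniqueness; nothing of the sort has been established in the paper for this operator.
\item ``Running the extremum principle reflected in time'' has no content for a parabolic problem: the time-reversed equation is not of the same type, and Theorem~\ref{weakPrinc} gives no control from the top of the cylinder downwards.
\end{itemize}

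What is actually needed (and is the standard route for this kind of fractional Hopf lemma, as in \cite{RyPhD}) is a barrier argument: assuming $u\not\equiv 0$ on $Q_{s,t_0}$, pick $(x_*,t_*)$ with $u(x_*,t_*)>0$, and construct an explicit nonnegative subsolution $w$ on a subdomain reaching the point $(s(t_0),t_0)$ with $w\leq u$, $w(s(t_0),t_0)=0$ and $D_x^{\al}w(s(t_0),t_0)<0$; then Proposition~\ref{extremumPple} applied to $u-w$ at $s(t_0)$ gives $D_x^{\al}u(s(t_0),t_0)\leq D_x^{\al}w(s(t_0),t_0)<0$. Your plan never produces such a barrier, and without it (or a genuine strong maximum principle, which the paper does not supply) the proof cannot be completed along the lines you propose.
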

By  Lemma \ref{Lemma 4.15} and Proposition \ref{comparacion} it is not difficult to obtain the following result.
\begin{prop}\label{nonposda}  Let $u$ be a solution to \eqref{niech} such that $u$ verifies the assumptions of Theorem~\ref{weakPrinc}. 
Then $(\da u)(s(t),t)~\leq~0$. Furthermore, if $\uz \not\equiv 0$ or $h \not\equiv 0$, then for every $t \in (0,T]$ we have $(\da u)(s(t),t)~<~0$.
\end{prop}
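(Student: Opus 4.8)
\emph{Proof strategy.} The statement is essentially a corollary of the comparison principle (Proposition~\ref{comparacion}) together with the dichotomy in Lemma~\ref{Lemma 4.15}, so the plan is short. First I would observe that $u$ itself is admissible in Proposition~\ref{comparacion}: since $u$ solves \eqref{niech}, it satisfies \eqref{WMP}$(i)$--$(iv)$ with equalities (using $u(s(t),t)=0\ge 0$ and $\uz\ge 0$), and it has the regularity required in Theorem~\ref{weakPrinc} by hypothesis. Hence $u\ge 0$ on $Q_{s,T}$. In particular, for each fixed $t\in(0,T]$ the function $u(\cdot,t)$ is non-negative on $[0,s(t)]$ and, by \eqref{niech}$(iii)$ (extended to $t=T$ by continuity of $u$ on $\overline{Q_{s,T}}$), it vanishes at the right endpoint, so $x=s(t)$ is a global minimum of $u(\cdot,t)$ on $[0,s(t)]$.

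Next I would fix $t_{0}\in(0,T]$ and apply Lemma~\ref{Lemma 4.15}: since $u(s(t_{0}),t_{0})=0$ and $u$ satisfies the assumptions of Theorem~\ref{weakPrinc}, the lemma yields exactly one of two possibilities: either $(\da u)(s(t_{0}),t_{0})<0$, or $u\equiv 0$ on $Q_{s,t_{0}}$. In the first case we are done. In the second case $D_{x}^{\al}u(s(t_{0}),t_{0})=D_{x}^{\al}0=0$, which still gives $(\da u)(s(t_{0}),t_{0})\le 0$. As $t_{0}$ was arbitrary, this proves the first assertion.

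For the strict inequality under the hypothesis $\uz\not\equiv 0$ or $h\not\equiv 0$, I would simply rule out the degenerate alternative. If $u\equiv 0$ on $Q_{s,t_{0}}$, then \eqref{niech}$(iv)$ forces $\uz\equiv 0$, and \eqref{niech}$(ii)$ forces $h(\tau)=-D_{x}^{\al}u(0^{+},\tau)=0$ for every $\tau\in(0,t_{0})$; together these contradict the standing hypothesis. Hence $u\not\equiv 0$ on $Q_{s,t_{0}}$, and Lemma~\ref{Lemma 4.15} leaves only $(\da u)(s(t_{0}),t_{0})<0$; since $t_{0}\in(0,T]$ was arbitrary, $(\da u)(s(t),t)<0$ for all $t\in(0,T]$.

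There is no serious obstacle here; the only point needing a little care is the strict inequality, i.e.\ excluding $u\equiv 0$ on $Q_{s,t_{0}}$, which is handled by reading the hypotheses on $\uz$ and $h$ off the initial and boundary conditions of \eqref{niech}. It is worth noting why one cannot obtain the non-strict bound $(\da u)(s(t),t)\le 0$ directly from the endpoint extremum principle of Proposition~\ref{extremumPple}: the local regularity assumption \eqref{regumax} only controls $u(\cdot,t)$ on intervals $(\ve,\omega)$ with $\omega<s(t)$, not up to $x=s(t)$, so $s(t)$ lies outside every interval on which \ref{extremumPple} could be invoked. It is precisely this boundary behaviour at $x=s(t)$ that Lemma~\ref{Lemma 4.15} is designed to handle, which is why the argument is routed through it.
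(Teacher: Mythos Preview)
Your argument is correct and very close to the paper's. Both proofs begin by applying Proposition~\ref{comparacion} to obtain $u\ge 0$ on $Q_{s,T}$, and both invoke Lemma~\ref{Lemma 4.15} for the strict inequality. The one substantive difference is in how the non-strict bound $(\da u)(s(t),t)\le 0$ is obtained: the paper applies Proposition~\ref{extremumPple} directly at the boundary minimum $x_{0}=s(t)$, whereas you route everything through the dichotomy of Lemma~\ref{Lemma 4.15} and read off $\da u=0$ in the degenerate branch.

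Your final remark, that Proposition~\ref{extremumPple} \emph{cannot} be applied at $x=s(t)$ because \eqref{regumax} only controls $u_{x}$ on $(\ve,\omega)$ with $\omega<s(t)$, is contrary to what the paper actually does: the paper simply invokes Proposition~\ref{extremumPple} there. Note that Proposition~\ref{extremumPple} requires regularity of $u(\cdot,t)$ itself (not $u_{x}$) in $H^{\al+\beta}(\ve,s(t))$, a weaker demand than what \eqref{regumax} imposes on $u_{x}$; in any case your alternative via Lemma~\ref{Lemma 4.15} is a clean way to sidestep the issue, but you should not assert that the extremum-principle route is blocked.

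One small point on the strict part: your contradiction argument shows only that $h\equiv 0$ on $(0,t_{0})$, not on all of $(0,T)$, so if $\uz\equiv 0$ and $h$ happens to vanish on an initial subinterval the conclusion $(\da u)(s(t),t)<0$ can fail for small $t$. The paper's own proof is in fact less careful here (it treats only the case $\uz\not\equiv 0$), so this is a shared wrinkle rather than a defect specific to your write-up.
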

\begin{proof}
Applying Proposition \ref{comparacion} to $u$ we deduce that $u \geq 0$ on $\overline{Q_{s,T}}$. Since $u(s(t),t)=0$ we  deduce that $u$ attains its minimum at every point $(s(t),t)$ for every $t\in (0,T)$. Then, from the extremum principle given in  Proposition \ref{extremumPple} it holds that $\da u(s(t),t) \leq 0$. Furthermore, if $\uz \not \equiv 0$, from  Lemma \ref{Lemma 4.15} we obtain that $\da u(s(t),t) < 0$.
\end{proof}

\begin{prop}\label{daboundlem}
Let $u$ be a solution to \eqref{niech} which verifies the assumptions of Theorem~\ref{weakPrinc}. Let us assume that $h$ and $s$ satisfy repsectively \eqref{zalh} and  \eqref{zals}, and $\uz$ is such that $0\leq \uz(x)\leq \frac{M}{\G(1+\al)}(b^\al-x^\al)$, where $M$ is the constant in \eqref{zals} fixed as $M=||h||_{\infty}$. Then the following bounds hold
\eqq{(\da u)(s(t),t) \geq -M \m{ for every } t \in (0,T)}{dabound}
and
\eqq{0 \leq u(x,t)\leq  \frac{M}{\Gamma(1+\al)} (s(t)^{\al}-x^\al) \m{ for } (x,t) \in Q_{s,T}.}{estu}
\end{prop}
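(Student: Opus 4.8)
The plan is to derive both bounds by comparing $u$ with the explicit barrier
\[
w(x,t) := \frac{M}{\G(1+\al)}\bigl(s(t)^{\al} - x^{\al}\bigr), \qquad (x,t)\in\overline{Q_{s,T}},
\]
and for this the one computation that matters is the elementary identity $\da(x^{\al})=\G(1+\al)$. Since $s(t)^{\al}$ does not depend on $x$, this gives $\da w(x,t)=-M$, hence $\poch\da w(x,t)=0$, while $w_{t}(x,t)=\frac{M\al}{\G(1+\al)}s(t)^{\al-1}\dot s(t)\geq 0$ almost everywhere because $\dot s\geq 0$ by \eqref{zals}. Thus $w$ is a supersolution of the governing equation; moreover $w(\cdot,t)\in AC[0,s(t)]$ (the map $x\mapsto x^{\al}$ is absolutely continuous for $\al\in(0,1)$) and $w_{x}(\cdot,t)$ is $C^{\infty}$ on every subinterval $(\ve,\omega)$ with $0<\ve<\omega<s(t)$, so that $w$, and consequently $w-u$, satisfies the regularity hypotheses of Theorem \ref{weakPrinc} (see the remark on this point at the end).

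First I would obtain the lower bound $0\leq u$ in \eqref{estu} by applying the comparison principle, Proposition \ref{comparacion}, to $u$ itself: $u$ satisfies conditions $(i)$--$(iv)$ of \eqref{niech} with equalities, in particular $u_{t}-\poch\da u=0\geq 0$, $-\da u(0,t)=h(t)\geq 0$, $u(s(t),t)=0$ and $u(x,0)=\uz(x)\geq 0$, so Proposition \ref{comparacion} yields $u\geq 0$ on $Q_{s,T}$. Next I would set $v:=w-u$ and verify the hypotheses of Proposition \ref{comparacion} for $v$ with boundary datum $\tl h:=M-h$ in place of $h$. Using the computations above and the equation in \eqref{niech},
\[
v_{t}-\poch\da v = w_{t}-\poch\da w=\tfrac{M\al}{\G(1+\al)}s(t)^{\al-1}\dot s(t)\geq 0,\qquad -\da v(0,t)=-(-M)+(-h(t))=\tl h(t),
\]
\[
v(s(t),t)=w(s(t),t)-0=0,\qquad v(x,0)=\tfrac{M}{\G(1+\al)}(b^{\al}-x^{\al})-\uz(x)\geq 0,
\]
the last inequality being exactly the hypothesis on $\uz$. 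Since $M=||h||_{\infty}$, the function $\tl h=M-h$ is non-negative and lies in $W^{1,\infty}(0,T)$, i.e. it satisfies \eqref{zalh}; hence Proposition \ref{comparacion} gives $v\geq 0$ on $Q_{s,T}$, which is precisely the upper bound in \eqref{estu}. Together with the lower bound this proves \eqref{estu}.

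Finally I would deduce \eqref{dabound} from \eqref{estu}, arguing as in the proof of Proposition \ref{nonposda}. Fix $t\in(0,T)$. By \eqref{estu} and $u(s(t),t)=0=w(s(t),t)$, the function $x\mapsto v(x,t)=w(x,t)-u(x,t)$ is non-negative on $[0,s(t)]$ and vanishes at the right endpoint $x_{0}=s(t)$, so it attains its minimum over $[0,s(t)]$ at $x_{0}$. Since $v(\cdot,t)$ is absolutely continuous on $[0,s(t)]$ with the local fractional-Sobolev regularity required there, item 2 of Proposition \ref{extremumPple} (with $L=s(t)$) gives $\da v(s(t),t)\leq 0$. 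As $\da v(s(t),t)=\da w(s(t),t)-\da u(s(t),t)=-M-\da u(s(t),t)$, this rearranges to $\da u(s(t),t)\geq -M$, which is \eqref{dabound}.

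The only genuinely delicate point is regularity bookkeeping: one must make sure the barrier $w$ — hence $v=w-u$ — really meets the hypotheses of Theorem \ref{weakPrinc} and Proposition \ref{extremumPple}, even though $w_{t}$ is only bounded in $t$ (it carries the factor $\dot s$, defined only almost everywhere) and $w_{x}(\cdot,t)$ blows up like $x^{\al-1}$ as $x\to 0^{+}$. Both features are harmless: the supersolution inequality $v_{t}-\poch\da v\geq 0$ is only needed a.e., $x^{\al}$ is absolutely continuous on $[0,s(t)]$, and $w_{x}(\cdot,t)\in C^{\infty}$ away from the origin, so it lies in $H^{\al+\beta}(\ve,\omega)$ for every $\beta$; with these observations all the extremum and comparison tools apply verbatim. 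Everything else reduces to the identity $\da(x^{\al})=\G(1+\al)$ and the elementary algebra above.
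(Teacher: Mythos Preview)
Your proof is correct and follows essentially the same approach as the paper: introduce the barrier $w(x,t)=\frac{M}{\G(1+\al)}(s(t)^{\al}-x^{\al})$, verify it is a supersolution with the right boundary behavior, and apply Proposition~\ref{comparacion} to $w-u$ to obtain \eqref{estu}, then use the extremum principle at the right endpoint to get \eqref{dabound}. The paper's write-up is terser --- it invokes Proposition~\ref{nonposda} for the last step rather than spelling out the appeal to Proposition~\ref{extremumPple} as you do --- but the underlying argument is identical, and your explicit regularity remarks on the barrier are a welcome addition.
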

\begin{proof} Let $u$ be a solution to problem \eqref{niech}, $v(x,t)=\frac{M}{\G(1+\al)}(s(t)^\al-x^\al)$ and define $w(x,t):=v(x,t)-u(x,t)$. We will prove that $w$ satisfies \eqref{WMP} so we can apply Proposition \ref{comparacion}.

Note that function $v$ verifies that ${v}_t(x,t)-\frac{\p}{\p x} D_x^{\al} v(x,t)=\frac{\al M}{\Gamma(1+\al)}s^{\al-1}(t)\dot{s}(t) \geq 0$  due to \eqref{zals}. Also $v(s(t),t)=0$, $D_x^\al v \equiv-M \leq h(t)$ because $M=||h||_\infty$, and from assumption $v(x,0)=\frac{M}{\Gamma(1+\al)} (b^{\al}-x^\al)\geq \uz(x)$. We deduce then that $(v-u)(x,t)\geq 0$ and \eqref{estu} holds. Finally, applying Proposition \ref{nonposda} we get \eqref{dabound}.
\end{proof}
Finally, we present the results regarding the monotone dependence upon data. In the proof we follow the lines of the proof of \cite[Theorem 5]{Rys:2020}. 
\begin{theo}\label{desigfronteraN}
Let $(s_1,u_1)$ and $(s_2,u_2)$ be the solutions to \eqref{FSSP-al-N-2} in the sense of Definition \ref{sol-b>0}, satisfying the regularity assumptions from Theorem \ref{weakPrinc}, corresponding, respectively, to the data $(b_1,{u_{0}}_1,h_1)$ and $(b_2,{u_0}_2,h_2)$ such that ${u_0}_2(b_2)=0$. Then, if $b_1\leq b_2$, ${u_0}_1\leq {u_0}_2$, $h_1\leq h_2$, it holds that $s_1\leq s_2$.

\end{theo}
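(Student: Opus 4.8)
The plan is to establish the ordering $s_1 \le s_2$ by a contradiction argument built around the comparison principle (Proposition \ref{comparacion}) and the integral form of the Stefan condition (Theorem \ref{condicionintegralNC}). First I would define $\tau := \sup\{t \in [0,T] : s_1(\sigma) \le s_2(\sigma) \text{ for all } \sigma \in [0,t]\}$. Since $b_1 = s_1(0) \le s_2(0) = b_2$, and since $s_1,s_2$ are continuous with $\dot s_i$ bounded, a short argument shows $\tau > 0$: indeed, near $t=0$ the boundary $s_2$ cannot fall below $s_1$ because on $Q_{s_1,t}$ (for small $t$, where $s_1 \le s_2$ trivially holds at $t=0$ and by continuity on a small interval) the functions are comparable. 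The goal is then to show $\tau = T$.

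Suppose for contradiction $\tau < T$. On the region $Q_{s_1,\tau}$ I would compare $u_1$ and $u_2$: set $w := u_2 - u_1$ on $Q_{s_1,\tau}$. On the fixed face, $-D_x^\al w(0,t) = h_2(t) - h_1(t) \ge 0$; at $t=0$, $w(x,0) = {u_0}_2(x) - {u_0}_1(x) \ge 0$ for $0 < x < b_1$ (using $b_1 \le b_2$ and ${u_0}_1 \le {u_0}_2$, where ${u_0}_2$ is defined up to $b_2 \ge b_1$); and on the lateral boundary $x = s_1(t)$ we have $w(s_1(t),t) = u_2(s_1(t),t) - 0 = u_2(s_1(t),t) \ge 0$, the last inequality because $u_2 \ge 0$ on $\overline{Q_{s_2,T}}$ by Proposition \ref{comparacion} and $s_1(t) \le s_2(t)$ for $t \le \tau$ so $(s_1(t),t) \in \overline{Q_{s_2,T}}$. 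Also $w_t - \partial_x D_x^\al w = 0$. Hence by Proposition \ref{comparacion}, $w \ge 0$ on $Q_{s_1,\tau}$, i.e. $u_1 \le u_2$ there. In particular, for $x$ approaching $s_1(t)$ from below along $x < s_1(t)$, one expects $D_x^\al u_1(s_1(t),t) \le D_x^\al u_2(s_1(t),t)$; more carefully, since $u_2 - u_1 \ge 0$ on $[0,s_1(t)]$ and vanishes at $x = s_1(t)$ together with $u_1(s_1(t),t)=u_2(s_1(t),t)=0$ only if $s_1(t)=s_2(t)$ — in general $u_2(s_1(t),t) \ge 0$ — I would use an extremum-type comparison: the function $x \mapsto u_2(x,t) - u_1(x,t)$ attains a minimum over $[0,s_1(t)]$ at $x = s_1(t)$ when $u_2(s_1(t),t)=0$, and otherwise one argues directly, giving $D_x^\al u_1(s_1(t),t) \le D_x^\al u_2(s_1(t),t)$, hence by the Stefan condition $(\ref{FSSP-al-N-2}\text{-v})$, $\dot s_1(t) \le \dot s_2(t)$ for a.e. $t < \tau$, which forces $s_1(t) \le s_2(t)$ with a strict-inequality margin unless the data coincide — contradicting the definition of $\tau$ as a supremum where equality $s_1(\tau) = s_2(\tau)$ must hold while $\dot s_1 \le \dot s_2$ just to the right prevents $s_1$ from overtaking $s_2$.

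An alternative and cleaner route, which I would actually prefer to write up, avoids pointwise control of $D_x^\al u$ at the moving boundary by using the integral condition \eqref{CondInt1N} directly. From Theorem \ref{condicionintegralNC}, for $i=1,2$,
\[
s_i(t) = b_i + \int_0^t h_i(\tau)\,\rmd\tau + \int_0^{b_i}{u_0}_i(x)\,\rmd x - \int_0^{s_i(t)}u_i(x,t)\,\rmd x.
\]
For $t \le \tau$ one has $s_1(t) \le s_2(t)$ and $u_1 \le u_2$ on $Q_{s_1,t}$ by the comparison argument above, together with $u_2 \ge 0$ on $[s_1(t),s_2(t)]$; hence
\[
\int_0^{s_1(t)}u_1(x,t)\,\rmd x \le \int_0^{s_1(t)}u_2(x,t)\,\rmd x \le \int_0^{s_2(t)}u_2(x,t)\,\rmd x.
\]
Combining with $b_1 \le b_2$, $h_1 \le h_2$, ${u_0}_1 \le {u_0}_2$, and $\int_0^{b_1}{u_0}_1 \le \int_0^{b_2}{u_0}_2$, we get $s_1(t) \le s_2(t)$ at $t = \tau$ with the subtraction of a \emph{smaller} integral, which upon differentiating or by a Gr\"onwall-type closing argument propagates past $\tau$, contradicting maximality unless $\tau = T$.

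The main obstacle is the comparison step on the strip $Q_{s_1,\tau}$: one must be sure that $w = u_2 - u_1$ genuinely satisfies all four hypotheses of Proposition \ref{comparacion}, in particular the regularity assumptions of Theorem \ref{weakPrinc} (the local $H^{\al+\beta}$ regularity of $w_x$, which follows since $u_1,u_2$ individually have it), and that the lateral boundary value $w(s_1(t),t) = u_2(s_1(t),t)$ is nonnegative — this is exactly where $s_1(t) \le s_2(t)$ on $[0,\tau]$ and the nonnegativity of $u_2$ on its own domain (Proposition \ref{comparacion} applied to $u_2$) are used, so the argument is genuinely bootstrapped along the time interval. A secondary technical point is the transition at $t = \tau$: one needs that $s_1 \le s_2$ on the closed interval $[0,\tau]$ (immediate by continuity) and then that strict monotone control $\dot s_1 \le \dot s_2$ near $\tau$, or the integral inequality, prevents $s_1$ from crossing above $s_2$, so that $\tau$ cannot be an interior point of $[0,T]$; the assumption ${u_0}_2(b_2) = 0$ is what guarantees the data are compatible with $u_2$ vanishing at its initial right endpoint so the comparison at $t=0$ is clean.
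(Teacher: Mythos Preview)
Your overall structure (define the first touching time, compare $u_1$ and $u_2$ on the smaller domain, derive a contradiction) is the right one and matches the paper's Case~1. However, both of your routes for closing the contradiction have genuine gaps.

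In Route~1 you claim that $D_x^{\al}u_1(s_1(t),t)\le D_x^{\al}u_2(s_1(t),t)$ yields $\dot s_1(t)\le\dot s_2(t)$ for a.e.\ $t<\tau$. But the Stefan condition for $u_2$ reads $\dot s_2(t)=-D_x^{\al}u_2(s_2(t),t)$, evaluated at $s_2(t)$, not at $s_1(t)$; when $s_1(t)<s_2(t)$ the inequality you obtain says nothing about $\dot s_2(t)$. The comparison of fluxes is only useful at the touching time $\tau$ itself, where $s_1(\tau)=s_2(\tau)$. There the paper uses Proposition~\ref{nonposda} (ultimately Lemma~\ref{Lemma 4.15}) to get the \emph{strict} inequality $D_x^{\al}(u_2-u_1)(s_1(\tau),\tau)<0$, hence $\dot s_1(\tau)<\dot s_2(\tau)$, which does contradict $\dot s_1(\tau)\ge\dot s_2(\tau)$. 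Your argument only yields $\le$, which does not contradict $\ge$, and the strict version requires $u_2-u_1\not\equiv 0$ on $Q_{s_1,\tau}$ --- you never secure this.

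In Route~2 the sign goes the wrong way. In the integral identity \eqref{CondInt1N} the term $\int_0^{s_i(t)}u_i$ carries a \emph{minus} sign, so your inequality $\int_0^{s_1(t)}u_1\le\int_0^{s_2(t)}u_2$ makes the right-hand side for $s_2$ \emph{smaller}, not larger, and the subtraction does not give $s_2-s_1\ge 0$. No Gr\"onwall argument rescues this: the structure of \eqref{CondInt1N} is not monotone in the way you need.

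This is exactly why the paper splits into two cases. Case~1 assumes $b_1<b_2$ and ${u_0}_1\not\equiv{u_0}_2$, which guarantees both that $\tau>0$ and that $u_2-u_1\not\equiv 0$, so the strict inequality from Proposition~\ref{nonposda} applies at the touching time. Case~2 handles the general situation $b_1\le b_2$ by a perturbation: one introduces $(b_\delta,{u_0}_\delta,h_\delta)$ with $b_\delta=b_2+\delta$ and ${u_0}_\delta>{u_0}_2$, applies Case~1 twice to get $s_1\le s_\delta$ and $s_2\le s_\delta$, and then uses the integral identity to show $s_\delta-s_2\le C\delta$, so that $s_1\le s_\delta\to s_2$ as $\delta\to 0$. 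Your proposal is missing both the strictness mechanism and the perturbation step, and as written neither route closes.
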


\begin{proof}

\textsl{Case 1:} $b_1<b_2$ and ${u_0}_1\not\equiv {u_0}_2$ on $[0,b_1]$. We will proceed by contradiction.
Suppose that $s_1\not\leq s_2$. Then, we can define $t_0=\inf\{t\in[0,T]:s_1(t)=s_2(t)\}$. 
In fact, $s_1$ and $s_2$ are increasing functions such that $s_1(0)=b_1<b_2=s_2(0)$ and $s_1(\overline{t})>s_2(\overline{t})$ for some $\overline{t}\in (0,T)$,
hence $\{t\in[0,T]:s_1(t)=s_2(t)\}$ is a nonempty and bounded set and $t_0$ is well defined. Furthermore, since $s_1(0)<s_2(0)$, we have $t_0 \neq 0$.


Defining $v:=u_2-u_1$, we have
$$v(s_1(t_0),t_0)=u_2(s_1(t_0),t_0)-u_1(s_1(t_0),t_0)=u_2(s_2(t_0),t_0)-u_1(s_1(t_0),t_0)=0,$$
and by Proposition~\ref{comparacion}, $v\geq 0$ on $Q_{s_1,t_0}$. Now, taking into account that ${u_0}_1\not\equiv {u_0}_2$ we apply   Proposition~\ref{nonposda} to get that $D_x^{\al}v(s_1(t_0),t_0)<0$. Hence,  $ D_x^{\al}u_2(s_1(t_0),t_0)-D_x^{\al}u_1(s_1(t_0),t_0)=-\dot{s}_2(t_0)+\dot{s}_1(t_0)<0.$ 


The last assertion is  equivalent to
\begin{equation}\label{mono-1}\dot{s}_1(t_0)<\dot{s}_2(t_0).\end{equation}
 On the other hand, by definition of $t_0$, $s_1<s_2$ on $[0,t_0)$ and $s_1(t_0)=s_2(t_0)$. Then,
 $$\dot{s}_1(t_0)=\lim\limits_{t\rightarrow t_0^-}\frac{s_1(t)-s_1(t_0)}{t-t_0}\geq \lim\limits_{t\rightarrow t_0^-}\frac{s_2(t)-s_2(t_0)}{t-t_0}=\dot{s}_2(t_0),$$
 which contradicts \eqref{mono-1}, and we conclude that $s_1\leq s_2$.\\



\textit{Case 2:} $b_1\leq b_2$.\\
 Let us fix $\delta>0$ and consider $b_\delta = b_2+\delta$ and $h_\delta\equiv h_2$. Also, let ${u_0}_\delta$ be a smooth function defined on $[0,b_2+\delta]$ such that
$${u_0}_\delta \equiv 0 \text{ on } [b_\delta-\delta/2, b_\delta], \quad \quad  {u_0}_\delta \geq {u_0}_2 \text{ on } [0, b_2], $$
$$\max_{x\in [0,b_2]}\{{u_0}_\delta (x)-{u_0}_2(x) \}=\delta,\quad\quad \max_{x\in [b_2,b_\delta-\delta/2]}\{{u_0}_\delta (x)\}\leq \delta.$$

 Let $(u_\delta,s_\delta)$ be a solution to \eqref{FSSP-al-N-2} corresponding to the data $(b_\delta,{\uz}_\delta,h_\delta)$. We can apply \textsl{Case 1} to $(b_1,{u_0}_1,h_1)$ and $(b_\delta,{u_0}_\delta,h_\delta)$, and also to $(b_2,{u_0}_2,h_2)$ and $(b_\delta,{u_0}_\delta,h_\delta)$, obtaining respectively that  $s_1\leq s_\delta$ and  $s_2\leq s_\delta$. Now we apply Theorem \ref{condicionintegralNC} to get
\begin{equation}
s_\delta(t)=b_\delta+\int_0^t h_\delta(\tau) \rmd \tau+\int_0^{b_\delta} {u_0}_\delta(x)\rmd x-\int_0^{s_\delta(t)}u_\delta(x,t)\rmd x,
\end{equation}
 
\begin{equation}
s_2(t)=b_2+\int_0^t h_2(\tau) \rmd \tau+\int_0^{b_2} {u_0}_2(x)\rmd x-\int_0^{s_2(t)}u_2(x,t)\rmd x.
\end{equation}
 
 Hence
 \begin{equation}\label{eq000}
\begin{split}
s_\delta(t)-s_2(t)&=\delta+\int_0^{b_2} \hspace{-0.3cm}\left({u_0}_\delta(x)-{\uz}_2(x)\right)\rmd x+\int_{b_2}^{b_\delta}\hspace{-0.3cm} {\uz}_\delta(x)\rmd x
-\int_0^{s_2(t)}\hspace{-0.5cm}\left( u_\delta(x,t)-u_2(x,t)\right)\rmd x-\int_{s_2(t)}^{s_\delta(t)}\hspace{-0.6cm}u_\delta(x,t)\rmd x\\
&\leq \delta+b_2\delta+\frac{\delta^2}{2}-\int_0^{s_2(t)}\left( u_\delta(x,t)-u_2(x,t)\right)\rmd x-\int_{s_2(t)}^{s_\delta(t)}u_\delta(x,t)\rmd x.
\end{split}
\end{equation}
 By Proposition \ref{comparacion}, $u_2\leq u_\delta$ on $Q_{s_2,t}$ and $u_\delta\geq 0$ on $Q_{s_\delta,t}$. Thus, from \eqref{eq000} we obtain that
 \begin{equation}
s_\delta(t)-s_2(t)\leq \delta+b_2\delta+\frac{\delta^2}{2} \quad \text{for every } t\in (0,T).
\end{equation}
Thus, we deduce that, for arbitrary $\delta >0$ it holds that 
$$s_1 (t)\leq s_\delta (t)\leq s_2(t) +\delta+ b_2\delta+\frac{\delta^2}{2}.$$
Passing to the limit with $\delta$ we obtain the claim. 
\end{proof}
We finish this section with the uniqueness result.

\begin{theo}\label{uniN}
If problem \eqref{FSSP-al-N-2} admits a solution in the sense of Definition \ref{sol-b>0}, satisfying the regularity assumptions from Theorem \ref{weakPrinc}, then it is unique.
\end{theo}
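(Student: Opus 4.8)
The plan is to deduce uniqueness directly from the monotone dependence result (Theorem~\ref{desigfronteraN}) and the comparison principle (Proposition~\ref{comparacion}). Let $(u_1,s_1)$ and $(u_2,s_2)$ be two solutions to (\ref{FSSP-al-N-2}) in the sense of Definition~\ref{sol-b>0}, both satisfying the regularity assumptions of Theorem~\ref{weakPrinc}, and both corresponding to the same data $(b,\uz,h)$ with $b>0$. The first step is to record the compatibility relation $\uz(b)=0$: the corner point $(b,0)=(s_i(0),0)$ belongs to $\overline{Q_{s_i,T}}$, so by continuity of $u_i$ on $\overline{Q_{s_i,T}}$, letting $t\to 0^{+}$ in (\ref{FSSP-al-N-2}-iii) gives $u_i(b,0)=0$, whereas letting $x\to b^{-}$ in (\ref{FSSP-al-N-2}-iv) gives $u_i(b,0)=\uz(b)$; hence $\uz(b)=0$.

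Next I would apply Theorem~\ref{desigfronteraN} with both data triples taken equal to $(b,\uz,h)$. The ordering hypotheses $b_1\le b_2$, ${u_0}_1\le {u_0}_2$, $h_1\le h_2$ hold with equality, and the extra requirement ${u_0}_2(b_2)=\uz(b)=0$ was just verified; the theorem therefore yields $s_1\le s_2$. Exchanging the roles of the two solutions gives $s_2\le s_1$, so $s_1\equiv s_2=:s$ on $[0,T]$ and consequently $Q_{s_1,T}=Q_{s_2,T}=:Q_{s,T}$.

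It then remains to prove $u_1\equiv u_2$ on $\overline{Q_{s,T}}$. Put $v:=u_1-u_2$. By linearity of the equation and of the fractional operators, and because $u_1,u_2$ share the same boundary flux $h$ at $x=0$, the same value $0$ on $x=s(t)$, and the same initial datum $\uz$, the function $v$ satisfies
\[
v_t-\poch\da v=0 \ \text{ in } Q_{s,T},\qquad -\da v(0,t)=0,\qquad v(s(t),t)=0,\qquad v(x,0)=0,
\]
which is exactly the system (\ref{WMP}) with $h\equiv 0$. Moreover $v$ inherits the regularity required in Theorem~\ref{weakPrinc}: $v$ has the structural regularity of Definition~\ref{sol-b>0} as a difference of two such functions, $v(\cdot,t)\in AC[0,s(t)]$ for every $t\in(0,T)$, and choosing $\beta:=\min\{\beta_1,\beta_2\}>\tfrac{1}{2}$, where $\beta_i$ is the exponent attached to $u_i$ in (\ref{regumax}), we get $v_x(\cdot,t)\in H^{\al+\beta}(\ve,\omega)$ for all $0<\ve<\omega<s(t)$. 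Hence Proposition~\ref{comparacion} yields $v\ge 0$ on $Q_{s,T}$; applying it to $-v$, which satisfies the very same system, gives $v\le 0$. Therefore $v\equiv 0$, i.e. $u_1\equiv u_2$, which proves uniqueness.

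The argument is essentially bookkeeping once Theorem~\ref{desigfronteraN} and Proposition~\ref{comparacion} are available; the only points that need a little attention are the verification of $\uz(b)=0$ (so that Theorem~\ref{desigfronteraN} is applicable with identical data) and the observation that the difference $v$ still fulfils the local regularity hypothesis (\ref{regumax}), which is why one passes to the common exponent $\beta=\min\{\beta_1,\beta_2\}$.
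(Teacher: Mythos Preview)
Your proof is correct and follows exactly the paper's approach: apply Theorem~\ref{desigfronteraN} twice with identical data to get $s_1\equiv s_2$, then apply Proposition~\ref{comparacion} to $u_1-u_2$ and to $u_2-u_1$ to conclude $u_1\equiv u_2$. You have in fact been more careful than the paper in checking the hypothesis $\uz(b)=0$ needed for Theorem~\ref{desigfronteraN} and in noting that the difference inherits the regularity \eqref{regumax} with exponent $\min\{\beta_1,\beta_2\}$.
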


\begin{proof}

Let $(s_1,u_1)$ and $(s_2,u_2)$ be solutions to \eqref{FSSP-al-N-2} corresponding to the same data $(b,\uz,h)$. Then, by Theorem \ref{desigfronteraN}, $s_1\leq s_2$ and $s_2\leq s_1$ imply that $s_1\equiv s_2$. Applying Proposition \ref{comparacion} to $u:=u_2-u_1$ and $v=u_1-u_2$, we obtain respectively $u_1\leq u_2$ and $u_2\leq u_1$. Hence, $u_1 \equiv u_2$.

\end{proof}

\section{A solution for the b>0 case.}

In this section we will work with the case in which, at the initial time, the material presents the liquid and solid state delimited by the positive position $s(0)=b$. The positiveness of $b$ is the key that will allows us to transform the region $Q_{s,T}$  into a cylindrical domain.
The strategy of the proof is the same as in \cite{Rys:2020}. Firstly, we assume that function $s$ is given and we find a solution $u$ to the MBP (\ref{niech}). This is done by a transformation of the domain and the application of the evolution operator theory. Then we apply the Schauder fixed point theorem to obtain the solution to the FBP (\ref{IFSSP}). 
\subsection{The transformed problem and the characterization of the solution.}

Firstly, we will look for a solution $u$ to \eqref{niech}, assuming that the known boundary $s$ verifies \eqref{zals}. In order to pass to the cylindrical domain, we apply the standard substitution 
\begin{equation}\label{sust}
p = \frac{x}{s(t)}\end{equation}
 and, by defining
\eqq{w(p,t) := u(s(t)p,t) = u(x,t),}{defv}
we transform \eqref{niech} into the following problem in the cylindrical domain $(0,1)\times (0,T)$
\begin{equation}\label{niecw}
 \begin{array}{lll}
(i)& w_{t} - x\frac{\dot{s}(t)}{s(t)}w_{x} - \frac{1}{s^{1+\al}(t)} \poch \da w = 0 &  0<x<1, \hd 0<t<T,\\
(ii) & -\da w(0,t) = s^{\al}(t)h(t),& 0<t<T, \\ 
(iii) & w(1,t) = 0 &  0<t<T, \\
(iv) & w(x,0) = w_{0}(x) &  0<x<1, \\
\end{array}  \end{equation}
where $w_{0}(x) = u_{0}(xb)$ and we have renamed $p$ by $x$.

 
  We will find a solution to problem \eqref{niecw} applying the theory of evolution operators, following the lines proposed in \cite{RyPhD}. For that purpose, we recall the result given in \cite{ja}.
\begin{theo}\label{analitycznosc} \cite[Theorem 2]{ja}
The operator $\poch \da: \dd \subseteq \ld \rightarrow \ld$ is a generator of analytic semigroup where 
\eqq{D\left(\poch \da\right) \equiv \dd := \{u \in H^{1+\al}(0,1) :u_{x} \in {}_{0}H^{\al}(0,1),\hd  u(1)=0\},}{dziedzina}
\[\norm{u}_{\dd} = \norm{u}_{ H^{1+\al}(0,1)} \m{ for } \al \in (0,1)\setminus \{\frac{1}{2}\},
\]
\[
\norm{u}_{\dd} = \left(\norm{u}^{2}_{ H^{\frac{3}{2}}(0,1)} + \izj \frac{\abs{u_{x}(x)}^{2}}{x}dx \right)^{\frac{1}{2}}  \m{ for } \al = \frac{1}{2}.
\]
\end{theo}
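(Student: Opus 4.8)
The plan is to show that $A:=\poch\da$ with $D(A)=\dd$ is a \emph{sectorial} operator on $\ld$: densely defined, closed, with $\Sigma_\phi:=\{\la\in\bbC\setminus\{0\}:\abs{\arg\la}<\pi-\phi\}\subseteq\rho(A)$ and $\norm{(\la-A)^{-1}}_{\mathcal{L}(\ld)}\le C\abs{\la}^{-1}$ on $\Sigma_\phi$ for some $\phi\in(0,\tfrac\pi2)$, $C>0$; analyticity of the generated semigroup is then the classical characterization of analytic-semigroup generators. Throughout I would use that $\da u=I^{1-\al}u'$, hence $Au=\p^\al u'$ by Proposition \ref{propo frac}(\ref{propClave}), together with the isomorphism $\p^\al\colon{}_0H^\al(0,1)\to\ld$ and its inverse $I^\al$ from Proposition \ref{eq}. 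First, as a routine step, one checks that $\dd$ is well posed: if $u\in\dd$ then $u_x\in{}_0H^\al(0,1)$, so $Au=\p^\al u_x\in\ld$ with $\norm{Au}_{\ld}\simeq\norm{u_x}_{{}_0H^\al(0,1)}$, and combined with $u(1)=0$ this yields $\norm{u}_{\dd}\simeq\norm{u}_{H^{1+\al}(0,1)}$ for $\al\ne\tfrac12$ (from $\norm{u}_{H^{1+\al}}\simeq\norm{u}_{\ld}+\norm{u_x}_{H^\al}$ and the equivalence of $\|\cdot\|_{{}_0H^\al}$ and $\|\cdot\|_{H^\al}$ on functions vanishing at $0$), while for $\al=\tfrac12$ the characterization of ${}_0H^{1/2}(0,1)$ recalled before Proposition \ref{eq} brings in the extra Hardy term $\izj\abs{u_x}^2x^{-1}\,\rmd x$; density of $\dd$ in $\ld$ is clear since $C_c^\infty(0,1)\subseteq\dd$.

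Next I would construct the resolvent explicitly. Given $f\in\ld$, to solve $\la u-Au=f$ in $\dd$ I integrate on $(0,x)$, using $\da u(0^+)=0$ (Proposition \ref{propo frac}(\ref{Proposition 3.1}), valid since $u\in H^{1+\al}$), to get $\da u(x)=\int_0^x\bigl(\la u(y)-f(y)\bigr)\,\rmd y$; then, using Proposition \ref{propo frac}, items (\ref{propClave2}) and (\ref{RL inv a izq de I}), together with $\p^{1-\al}I^1=I^\al$, I obtain $u'=I^\al(\la u-f)$, and a further integration with $u(1)=0$ gives $u(x)=\int_x^1 I^\al f(y)\,\rmd y-\la\int_x^1 I^\al u(y)\,\rmd y$. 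Iterating this reaches only small $\abs{\la}$, so instead I pass to the fundamental system of $\la u-Au=0$: the solution compatible with $\da(\cdot)(0^+)=0$ is the Mittag--Leffler function $\phi_\la(x)=E_{1+\al}(\la x^{1+\al})$, a second, independent one is $\psi_\la(x)=x^\al E_{1+\al,1+\al}(\la x^{1+\al})$, and a Duhamel formula furnishes a particular solution $u_p(x)=\int_0^x G_\la(x,y)f(y)\,\rmd y$ with $\da u_p(0^+)=0$; imposing $u(1)=0$ then yields the resolvent in closed form, $u(x)=\int_0^1\bigl[G_\la(x,y)-\phi_\la(1)^{-1}\phi_\la(x)G_\la(1,y)\bigr]f(y)\,\rmd y$ (with $G_\la(x,\cdot)\equiv0$ on $(x,1)$). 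The regularity of $I^\al f$, $\phi_\la$ and $\psi_\la$ then shows this $u$ lies in $\dd$, which both proves $A$ closed and identifies $D(A)$ with $\dd$ (the maximal-regularity part of the statement).

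The sectorial bound $\norm{(\la-A)^{-1}f}_{\ld}\le C\abs{\la}^{-1}\norm{f}_{\ld}$, uniform for $\la\in\Sigma_\phi$ with some $\phi<\tfrac\pi2$, then reduces to $L^2$- and pointwise estimates on the above kernel, which follow from the classical asymptotics of $E_{1+\al}$ along rays (growth of exponential order $\abs{z}^{1/(1+\al)}$ for $\abs{\arg z}<\tfrac{(1+\al)\pi}{2}$, decay $O(\abs{z}^{-1})$ for $\abs{\arg z}>\tfrac{(1+\al)\pi}{2}$), the growth of $\phi_\la(x)$ being compensated by that of $\phi_\la(1)$ in the denominator; the classical theory then gives that $A$ generates an analytic semigroup. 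A shorter route to analyticity alone --- though not to the precise description of $\dd$, which still needs the first step --- is the form method: integrating by parts once, using $\da u(0^+)=0$ and $v(1)=0$, gives $\langle Au,v\rangle=-\langle I^{1-\al}u',v'\rangle$, so $A$ is minus the operator associated with the sesquilinear form $a(u,v)=\langle I^{1-\al}u',v'\rangle$; since the Riemann--Liouville kernel $x^{\al-1}/\G(\al)$ is completely monotone --- equivalently, its Fourier symbol $(i\xi)^{-(1-\al)}$ has argument $\pm\tfrac{(1-\al)\pi}{2}$ --- the operator $I^{1-\al}$ is sectorial on $\ld$ with half-angle $\tfrac{(1-\al)\pi}{2}<\tfrac\pi2$, so $a$ is a closed sectorial form on the energy space ${}_0H^{(1+\al)/2}(0,1)$ (with trace at $x=1$), whose associated operator generates an analytic semigroup.

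The main obstacle I expect is precisely this $\la$-uniform sectorial resolvent bound with an opening angle exceeding $\pi/2$ --- which is what \emph{analytic}, rather than merely $C_0$, generation requires: it demands sharp control of the Mittag--Leffler kernel uniformly in $x,y\in(0,1)$ and in $\la$, including in the exponential-growth regime where the cancellation between $\phi_\la(x)$ and $\phi_\la(1)$ must be exploited; on the form side the analogous difficulty is the sharp sectoriality constant of $I^{1-\al}$ on the bounded interval $(0,1)$ rather than on $\bbR$. A secondary technical point is the endpoint case $\al=\tfrac12$, where $H^{3/2}(0,1)$ is critical and the identification of $\dd$ must keep track of the finer ${}_0H^{1/2}$-norm responsible for the Hardy term $\izj\abs{u_x}^2x^{-1}\,\rmd x$.
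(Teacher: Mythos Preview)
The paper does not prove this theorem at all: it is quoted verbatim from \cite[Theorem 2]{ja} (the sentence preceding the statement reads ``we recall the result given in \cite{ja}''), and no argument is supplied here. So there is no ``paper's own proof'' to compare your proposal against.

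That said, your strategy is a reasonable outline of how such a result is established, and is in the spirit of the cited source. A few remarks. First, your two routes --- explicit resolvent via Mittag--Leffler functions versus the sesquilinear-form method --- are indeed the two standard approaches; the form method (writing $\langle -Au,v\rangle=\langle I^{1-\al}u',v'\rangle$ and exploiting sectoriality of the Riemann--Liouville kernel) is closer to what is actually done in \cite{ja}, and it delivers analyticity more cleanly than the kernel estimates you flag as the main obstacle. Second, your identification of the difficulty is accurate: the Mittag--Leffler route requires uniform control of the Green's kernel in the sector, and the cancellation you mention between $\phi_\la(x)$ and $\phi_\la(1)$ is delicate; this is precisely why the form approach is preferable. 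Third, on the domain characterization: your argument that $\norm{Au}_{\ld}\simeq\norm{u_x}_{{}_0H^\al}$ via Proposition~\ref{eq}, combined with $u(1)=0$ and Poincar\'e, gives the equivalence with $\norm{u}_{H^{1+\al}}$ for $\al\neq\tfrac12$, is correct and is essentially how one proceeds; the $\al=\tfrac12$ case with the Hardy term is handled exactly as you say, by unpacking the definition of ${}_0H^{1/2}(0,1)$.

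In short: there is nothing to compare against in this paper, but your proposal is a sound sketch of the argument that lives in the cited reference.
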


We note that, in \cite{Rys:2020}, by using Theorem \ref{analitycznosc}, a similar problem to \eqref{niecw}, but adressed with a classical Neumann homogeneus boundary condition, was  analyzed. 
Hence, in order to use the result in Theorem~\ref{analitycznosc}, but taking into account our different prescribed boundary condition and Proposition \ref{Lemma3.11-Phd-K}, we will search for the solution to \eqref{niecw} represented as a sum of two functions, one of them destined to belong to $\dd$ with respect to the space for every $t>0$.   Let $\varphi$ be a smooth function such that \eqq{\varphi(x) = \frac{x^{\al}}{\Gamma(\al+1)} \quad \text{on}\quad  [0,\frac{1}{2}],\quad \text{and}\quad  \varphi \equiv  0\quad   \text{on } \, [\frac{3}{4},1]}{auxvar} and define the function 
\begin{equation}\label{descomp-w}
w(x,t) := v(x,t) + h(t)s^{\al}(t)\varphi(x).
\end{equation}
We note that $w$ is a solution to  (\ref{niecw}) if and only if  $v$ is a solution to the \textsl{$\varphi$-auxiliary problem} 
\begin{equation}\label{niecv}
 \begin{array}{lll}
(i) & v_{t} - x\frac{\dot{s}(t)}{s(t)}v_{x} - \frac{1}{s^{1+\al}(t)} \poch \da v = g &  0<x<1, \hd 0<t<T,\\
(ii) & v_{x}(\cdot,t) \in {}_{0}H^{\al}(0,1),& 0<t<T, \\
(iii) &  v(1,t) = 0, &  0<t<T, \\
(iv) & v(x,0) = v_{0}(x) &  0<x<1, \\
\end{array} \end{equation}
where
\eqq{
v_{0}(x) := w_{0}(x) - h(0)b^{\al}\vf(x)
}{defv0}
and
\eqq{
g(x,t) := -\frac{d}{dt}(h(t)s^{\al}(t))\vf(x) + \dot{s}(t)s^{\al-1}(t)h(t)x\vf'(x) + \frac{h(t)}{s(t)}\poch \da \vf(x).
}{defg}

Then, if  $v$ is a solution to \eqref{niecv} for a given $\varphi$  verifying \eqref{auxvar}, we have that 

\begin{equation}\label{u=reg+sing}
u(x,t) = u_{reg}(x,t) + u_{sin}(x,t) \end{equation}
is a solution to \eqref{niech}, where 
  $ u_{reg}(x,t):= v\left(\frac{x}{s(t)},t\right)$  is the \textsl{regular part of $u$}  and \linebreak
    $ u_{sin}(x,t):= h(t)s^{\al}(t)\varphi\left(\frac{x}{s(t)}\right)$ is the \textsl{singular part of $u$}.

\subsection{Existence of a mild solution to (\ref{niecv}).}
In this subsection we will find a solution to (\ref{niecv}). Let us denote
\eqq{
A(t) = \frac{1}{s^{1+\al}(t)} \poch \da, \hd  D(A(t)) = \dd \m{ for every }t \in [0,T].}{A} 
Then from (\ref{zals}) and Theorem~\ref{analitycznosc} we infer that $A(t)$ generates a family of evolution operators
 $\{G(t,\sigma): 0\leq \sigma\leq t \leq T\}$. 
 Furthermore, we obtain the following simple lemma.
 \begin{lemma}\label{A-estimation} For every $w \in \dd$, there exist constants $c=c(\al,b,M,T)$ and $C=C(\al,b,M,T)$ such that 
\begin{equation}\label{A-dd-rel}
    c\norm{w}_{\dd}\leq \norm{A(t)w}_{\ld}\leq C\norm{w}_{\dd}, \quad \forall \, 0\leq t\leq T.
\end{equation}
 \end{lemma}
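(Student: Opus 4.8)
The plan is to prove the two-sided estimate \eqref{A-dd-rel} by combining Theorem~\ref{analitycznosc}, which already gives that $\poch \da : \dd \to \ld$ is bounded and boundedly invertible (being the generator of an analytic semigroup whose domain is $\dd$ with the graph norm, and whose norm is equivalent to the $H^{1+\al}$-norm), with the fact that the coefficient $s^{-(1+\al)}(t)$ is bounded above and below by positive constants uniformly in $t \in [0,T]$. Indeed, from \eqref{zals} we have $0 \le \dot s \le M$ and $s(0) = b > 0$, so $b \le s(t) \le b + MT$ for every $t \in [0,T]$; hence $(b+MT)^{-(1+\al)} \le s^{-(1+\al)}(t) \le b^{-(1+\al)}$.

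First I would record that, by Theorem~\ref{analitycznosc}, the operator $\poch \da$ with domain $\dd$ is closed and the graph norm $\norm{u}_{\ld} + \norm{\poch \da u}_{\ld}$ is equivalent to $\norm{u}_{\dd}$; moreover, since $0$ is in the resolvent set of the generator of an analytic semigroup up to a shift — and here one can check directly from Proposition~\ref{eq} (applied with $L=1$) that $\poch \da$ is actually an isomorphism from $\dd$ onto $\ld$ — there exist constants $c_0, C_0 > 0$ depending only on $\al$ such that
\[
c_0 \norm{w}_{\dd} \le \norm{\poch \da w}_{\ld} \le C_0 \norm{w}_{\dd} \qquad \text{for all } w \in \dd.
\]
The upper bound is immediate from the definition of $\dd$ and the mapping properties of $I^{1-\al}$ and $\da$ through Corollary~\ref{eqc}; the lower bound is the injectivity-plus-surjectivity statement, i.e. the inverse $(\poch\da)^{-1}$ is bounded from $\ld$ to $\dd$, which follows from Proposition~\ref{eq} since $\p^\al$ is an isomorphism onto $L^2$ and $I^\al$ is its inverse, composed with the identification $\poch = \p^{1-\al}D^\al$ type relations used to characterize $\dd$.

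Then I would simply multiply through: for $w \in \dd$ and $t \in [0,T]$,
\[
\norm{A(t) w}_{\ld} = \frac{1}{s^{1+\al}(t)} \norm{\poch \da w}_{\ld},
\]
so
\[
(b+MT)^{-(1+\al)} c_0 \norm{w}_{\dd} \le \norm{A(t) w}_{\ld} \le b^{-(1+\al)} C_0 \norm{w}_{\dd},
\]
and one sets $c := (b+MT)^{-(1+\al)} c_0$ and $C := b^{-(1+\al)} C_0$, both depending only on $\al, b, M, T$ as claimed. This completes the proof.

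I do not anticipate a serious obstacle here; the only point requiring a little care is making explicit that the constant in the equivalence $\norm{\poch\da w}_{\ld} \simeq \norm{w}_{\dd}$ depends only on $\al$ (not on $t$), which is exactly what Theorem~\ref{analitycznosc} together with Proposition~\ref{eq} provides, and that $s(t)$ stays in the compact interval $[b, b+MT]$ away from zero — this is where the hypothesis $b>0$ is essential and is precisely the feature distinguishing the ``$b>0$ case'' from the ``$b=0$ case.''
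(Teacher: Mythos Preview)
Your proposal is correct and follows essentially the same route as the paper. The paper's proof is slightly more concrete: it uses the identity $\poch \da w = \p^{\al} w_{x}$ (Proposition~\ref{propo frac}, item~\ref{propClave}) together with Proposition~\ref{eq} and Poincar\'e's inequality (available since $w(1)=0$) to obtain the $t$-independent equivalence $\norm{\poch\da w}_{\ld}\simeq\norm{w}_{\dd}$, whereas you appeal to Theorem~\ref{analitycznosc} more abstractly; both arguments then finish identically by bounding $s^{-(1+\al)}(t)$ using $b\le s(t)\le b+MT$.
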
 
 \begin{proof}
 From Proposition \ref{propo frac} item \ref{propClave} we have that
 $\norm{A(t)w}_{\ld}=\frac{\norm{\frac{\p}{\p x}\da w}_{\ld}}{s(t)^{1+\al}}=\frac{\norm{\p^\al  w_x}_{\ld}}{s(t)^{1+\al}}$.
 Then \eqref{A-dd-rel} holds by applying assumptions \eqref{zals}, Proposition \ref{eq} and Poincaré's inequality (which holds due to the fact that $w(1)=0$).
 \end{proof}

 Our method of the proof relies on the theory of complex interpolation. Thus, we will consider a family of spaces $[\ld,\dd]_{\delta}$ for $\delta \in [0,1]$, where clearly $[\ld,\dd]_{0} = \ld$ and $[\ld,\dd]_{1} = \dd$. Let us recall the properties of evolution operators, which can be found in \cite[Chapter 6]{Lunardi}, that will be used further.
For $f \in \ld$ we have
\eqq{\norm{G(t,\sigma)f}_{\ld} \leq c \norm{f}_{\ld}. }{r0}
If $f \in [L^{2}(0,1),\dd]_{\delta}$ then for any $0\leq\sigma<t\leq T$ and every $\delta \in (0,1)$
\eqq{
\norm{G(t,\sigma)f}_{\dd} \leq \frac{c}{(t-\sigma)^{1-\delta}}\norm{f}_{[L^{2}(0,1),\dd]_{\delta}}.
}{r1}
Moreover, for any $0 \leq \delta < \theta<1$, we have
\eqq{
\norm{G(t,\sigma)f}_{[L^{2}(0,1),\dd]_{\theta}} \leq \frac{c}{(t-\sigma)^{\theta-\delta}}\norm{f}_{[L^{2}(0,1),\dd]_{\delta}}
}{r4}
and for $0\leq \theta<\delta<1$
\eqq{
\norm{A(t)G(t,\sigma)f}_{[L^{2}(0,1),\dd]_{\theta}} \leq \frac{c}{(t-\sigma)^{1+\theta-\delta}}\norm{f}_{[L^{2}(0,1),\dd]_{\delta}}.
}{r2}
Furthermore, for every $0\leq \theta < \delta<1$ and $0\leq \sigma < r < t \leq T$
\eqq{
\norm{G(t,\sigma)f - G(r,\sigma)f}_{[\ld,\dd]_{\theta}} \leq \frac{c}{(t-r)^{\theta-\delta}}\norm{f}_{[\ld,\dd]_{\delta}}.
}{r5}
Finally, for every $a \in (0,1)$ and every $0\leq \sigma < r < t \leq T$
\[
\norm{A(t)G(t,\sigma)f-A(r)G(r,\sigma)f}_{L^{2}(0,1)}
\]
\eqq{
\leq c\left(\frac{(t-r)^{a}}{(r-\sigma)^{1-\delta}}+\frac{1}{(r-\sigma)^{1-\delta}}-\frac{1}{(t-\sigma)^{1-\delta}}\right)
\norm{f}_{[L^{2}(0,1),\dd]_{\delta}}.
}{r3}
The constant $c$ in estimates above is positive and depends only on $\al,\theta,\delta,T$ and $b,M$ from (\ref{zals}). Moreover, function $T\mapsto c(\al,\theta,\delta,b,M,T)$ is increasing.  We also recall the following characterization for the interpolation spaces $[\ld,\dd]_{\delta}$:

\begin{equation}\label{p60}
[\ld,\dd]_{\delta}=\begin{cases}
H^{(1+\al)\delta}(0,1) & \text{if } \delta \in \left(0,\frac{1}{2(1+\al)} \right), \, \, \al \in (0,1),\\
^0H^{1/2}(0,1) & \text{if } \delta=\frac{1}{2(1+\al)} , \, \, \al \in (0,1),\\
\left\{u\in  H^{(1+\al)\delta}(0,1): u(1)=0\right\} & \text{if } \delta \in \left(\frac{1}{2(1+\al)},1 \right), \, \al \in \left(0,\frac{1}{2}\right],\\
\left\{u\in  H^{(1+\al)\delta}(0,1): u(1)=0\right\} & \text{if } \delta \in \left(\frac{1}{2(1+\al)},\frac{3}{2(1+\al)} \right), \al \in \left(\frac{1}{2},1\right),\\
\left\{u\in  H^{3/2}(0,1): u(1)=0,\, \, u_x \in\, _0H^{1/2}(0,1)\right\} & \text{if } \delta =\frac{3}{2(1+\al)}, \, \, \al \in \left(\frac{1}{2},1\right),\\
\left\{u\in  H^{(1+\al)\delta}(0,1): u(1)=0,\, \, u_x(0)=0\right\} & \text{if } \delta \in \left(\frac{3}{2(1+\al)},1\right), \, \, \al \in \left(\frac{1}{2},1\right).
\end{cases}
\end{equation}
And the useful property by Lions and Magenes \cite[Remark 12.8.]{LiMaV1} which states that for every $s\neq \frac{1}{2}$  there holds 
\begin{equation} \label{Lions}
\partial_x \in  \mathcal{L}(H^s(0,1); H^{s-1}(0,1)). \end{equation}

Now, our next step is to establish the regularity of $g$   which appears at the right-hand-side of $(\ref{niecv}-{i})$.
We note that  we cannot guarantee that $g(1,t) = 0$, thus, according to the characterization \eqref{p60}, $g$ does not belongs to the interpolation space $[\ld,\dd]_{\delta}$ for $\delta > \frac{1}{2(1+\al)}$. Anyway, since $\poch \da x^{\al} = 0$ and $\vf - \frac{x^{\al}}{\Gamma(1+\al)}$ is smooth, assuming that  $h \in W^{1,\infty}(0,T)$, we obtain the following.

\begin{prop}\label{regularity-g} 
The function $g$ defined in \eqref{defg} verifies that 
\eqq{
g \in L^{\infty}(0,T;[\ld,\dd]_{\delta}) \m{ for every } \delta \in \left(0,\frac{1}{2(1+\al)}\right)
}{regg}
and there exists positive $c=c(b,M,T,\vf)$ such that
\eqq{
\norm{g}_{L^{\infty}(0,T;[\ld,\dd]_{\delta})} \leq c \norm{h}_{W^{1,\infty}(0,1)}.
}{estg}
Furthermore, since $x^{\al} \in {}_{0}H^{\beta}(0,1)$ for any $\beta < \al+\frac{1}{2}$ we obtain 
\eqq{
g \in L^{\infty}(0,T;{}_{0}H^{\beta}(0,1)) \m{ for any } \beta < \al+\frac{1}{2}.
}{reggh}
and for any $0<\ve<1$ there holds $g \in L^{\infty}(0,T;H^{1+\al}(\ve,1))$.
\end{prop}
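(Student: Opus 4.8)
The plan is to write $g$ as the sum of the three terms in \eqref{defg}, each of the separated form $g_j(x,t)=a_j(t)\,b_j(x)$: namely $a_1(t)=-\frac{d}{dt}(h(t)s^{\al}(t))$ with $b_1=\vf$; $a_2(t)=\dot s(t)s^{\al-1}(t)h(t)$ with $b_2(x)=x\vf'(x)$; and $a_3(t)=h(t)/s(t)$ with $b_3=\poch\da\vf$. One then bounds the time factors $a_j$ and the fixed spatial profiles $b_j$ independently. Since $h\in W^{1,\infty}(0,T)$ and, by \eqref{zals}, $b\le s(t)\le b+MT$ and $0\le\dot s\le M$ a.e., while $\frac{d}{dt}(hs^{\al})=\dot h\,s^{\al}+\al\,h\,s^{\al-1}\dot s$, a direct estimate yields $a_j\in L^{\infty}(0,T)$ with $\norm{a_j}_{L^{\infty}(0,T)}\le c(\al,b,M,T)\,\norm{h}_{W^{1,\infty}(0,T)}$ for $j=1,2,3$. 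Hence all the analytic content lies in the spatial profiles, and it suffices to locate $b_1,b_2,b_3$ in the appropriate function spaces, independently of $t$.

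For this I would use the splitting $\chi:=\vf-\frac{x^{\al}}{\Gamma(\al+1)}$, which, by the construction of $\vf$ in \eqref{auxvar}, is smooth on $[0,1]$ and vanishes identically on $[0,\frac12]$; in particular $\chi$ and $\chi'$ belong to ${}_0H^{\gamma}(0,1)$ for every $\gamma\ge0$. Then $b_1=\frac{x^{\al}}{\Gamma(\al+1)}+\chi$ and $b_2=\frac{x^{\al}}{\Gamma(\al)}+x\chi'$, so, using $x^{\al}\in{}_0H^{\beta}(0,1)$ for every $\beta<\al+\frac12$ together with $\chi, x\chi'\in{}_0H^{\gamma}(0,1)$ for all $\gamma$, one gets $b_1,b_2\in{}_0H^{\beta}(0,1)$ for every $\beta<\al+\frac12$ and $b_1,b_2\in C^{\infty}[\ve,1]$ for every $\ve\in(0,1)$. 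The profile $b_3$ is much more regular: since $D^{\al}x^{\al}=\Gamma(\al+1)$ is constant, $\poch\da\vf=\poch\da\chi=\poch I^{1-\al}\chi'$, and because $\chi'\in{}_0H^{\gamma}(0,1)$ for all $\gamma$, Corollary \ref{eqc} gives $I^{1-\al}\chi'\in{}_0H^{\gamma+1-\al}(0,1)$ for all $\gamma$; hence $I^{1-\al}\chi'$, and therefore $b_3=(I^{1-\al}\chi')'$, is smooth on $[0,1]$ and vanishes near $0$, so that $b_3\in{}_0H^{\beta}(0,1)$ for every $\beta\ge0$ and $b_3\in C^{\infty}[\ve,1]$.

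It remains to assemble the three claims. For \eqref{reggh}: for $\beta<\al+\frac12$ each $b_j$ lies in ${}_0H^{\beta}(0,1)$, so $g(\cdot,t)=\sum_j a_j(t)b_j\in{}_0H^{\beta}(0,1)$ with norm bounded uniformly in $t$ by $c(b,M,T,\vf)\norm{h}_{W^{1,\infty}(0,T)}$; the same decomposition and the fact that each $b_j\in C^{\infty}[\ve,1]\subset H^{1+\al}(\ve,1)$ give the final (unlabelled) assertion. For \eqref{regg}--\eqref{estg}: if $\delta\in(0,\frac{1}{2(1+\al)})$ then $(1+\al)\delta<\frac12<\al+\frac12$, and by \eqref{p60} one has $[\ld,\dd]_{\delta}=H^{(1+\al)\delta}(0,1)$, a space into which each $b_j$ embeds; combining this with the coefficient bounds via the triangle inequality yields \eqref{estg}, and in particular \eqref{regg}. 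The only point requiring genuine care — and hence the main obstacle — is the regularity of $b_3=\poch\da\vf$: one must observe that differentiating the Caputo derivative annihilates the singular part $x^{\al}$ of $\vf$, and then use the mapping properties of $I^{1-\al}$ on the scale ${}_0H^{\gamma}$ (Corollary \ref{eqc}) to upgrade the smooth remainder $\chi$; the rest is routine bookkeeping.
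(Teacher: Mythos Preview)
Your proposal is correct and follows essentially the same approach as the paper: the paper's argument (stated just before the proposition rather than as a formal proof) consists precisely of the two observations that $\poch\da x^{\al}=0$ and that $\vf-\frac{x^{\al}}{\Gamma(1+\al)}$ is smooth, together with $h\in W^{1,\infty}(0,T)$. Your treatment is simply a careful unpacking of this sketch via the factorization $g=\sum_j a_j(t)b_j(x)$ and the splitting $\chi=\vf-\frac{x^{\al}}{\Gamma(1+\al)}$, with the mapping property of $I^{1-\al}$ from Corollary~\ref{eqc} used to handle $b_3$.
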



Let us  search for a mild solution to (\ref{niecv}) in the following functional space defined for every $\gamma \in (0,1)$ as 
\[
Y_{\gamma}(0,T):=\{u\in C([0,T];\ld): t^{\gamma}u \in C([0,T];\dd)\}
\]
with the norm
\[
\norm{u}_{Y_{\gamma}(0,T)} = \norm{u}_{C([0,T];\ld)} + \sup_{t\in [0,T]}t^{\gamma}\norm{u(\cdot,t)}_{\dd}
\]
\begin{defi}\label{mild-sol} We say that $v \in  Y_{\gamma}(0,T)$  is a mild solution to $(\ref{niecv})$ if it satisfies
\eqq{
v(x,t) =  G(t,0)v_{0}(x) + \int_{0}^{t}G(t,\sigma)\frac{\dot{s}(\sigma)}{s(\sigma)}xv_{x}(x,\sigma)\rmd \sigma + \int_{0}^{t}G(t,\sigma)g(x,\sigma)\rmd \sigma .
 }{wzorv}
 \end{defi}

\begin{note} The existence and regularity results of the solution to (\ref{niecv}), given below, are the adaptation of analogous results given in \cite{Rys:2020}. However, it is important to note that here we consider less regularity of the initial data and a non-zero source term.
 \end{note}

We note that if $v \in \dd$ we may not expect that $v_{x}$ vanishes at the right endpoint of the interval. In this case, we obtain that $v_{x}(\cdot,t) \in [L^{2}(0,1),\dd]_{\delta}$ only for $\delta < \frac{1}{2(1+\al)}$. We will use this fact in the proof of the result concerning existence of mild solutions, which is given below.
\begin{theo}\label{first}
Let us assume that $h \in W^{1,\infty}(0,T)$, $v_{0} \in H^{\theta(\al+1)}(0,1)$ for fixed $\theta \in (0,\frac{1}{2(1+\al)})$.
Then, for every $1-\theta<\gamma < 1$ there exists a unique mild solution to~(\ref{niecv}) belonging to $Y_{\gamma}(0,T)$.
\end{theo}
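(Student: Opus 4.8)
The plan is to solve the integral equation \eqref{wzorv} by a fixed point argument in $Y_\gamma(0,T)$. Define the map $\mathcal{F}$ on $Y_\gamma(0,T)$ by
\[
(\mathcal{F}v)(x,t) := G(t,0)v_0(x) + \int_0^t G(t,\sigma)\frac{\dot s(\sigma)}{s(\sigma)}x v_x(x,\sigma)\,\rmd\sigma + \int_0^t G(t,\sigma)g(x,\sigma)\,\rmd\sigma,
\]
and show that for $T$ small enough $\mathcal{F}$ is a contraction on a suitable closed ball of $Y_\gamma(0,T)$; the solution on $[0,T]$ for arbitrary $T$ is then obtained by iterating on consecutive short time intervals, using that the constants in \eqref{r0}--\eqref{r3} depend increasingly but continuously on the length of the interval. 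The three terms in $\mathcal{F}$ will be estimated separately using the smoothing estimates recalled before Definition \ref{mild-sol}.

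\textbf{Key steps.} First I would handle the homogeneous term $G(t,0)v_0$: since $v_0 \in H^{\theta(\al+1)}(0,1) = [\ld,\dd]_\theta$ by \eqref{p60} (as $\theta < \frac{1}{2(1+\al)}$), estimate \eqref{r0} gives the $C([0,T];\ld)$ bound and \eqref{r1} gives $t^{1-\theta}\norm{G(t,0)v_0}_{\dd} \leq c\norm{v_0}_{[\ld,\dd]_\theta}$, so since $\gamma > 1-\theta$ we get $t^\gamma \norm{G(t,0)v_0}_\dd \leq cT^{\gamma-(1-\theta)}\norm{v_0}_{[\ld,\dd]_\theta} \to 0$ as $T\to 0$; continuity in $t$ up to $0$ in $\ld$ follows from strong continuity of the evolution family, and for the $\dd$-part one uses \eqref{r5}. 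Second, for the source term $\int_0^t G(t,\sigma)g(x,\sigma)\,\rmd\sigma$: by Proposition \ref{regularity-g}, $g \in L^\infty(0,T;[\ld,\dd]_\delta)$ for every $\delta < \frac{1}{2(1+\al)}$; picking such a $\delta$, \eqref{r0} bounds the $\ld$-norm by $cT\norm{g}_{L^\infty([\ld,\dd]_\delta)}$ and \eqref{r1} gives $t^\gamma$ times the $\dd$-norm bounded by $ct^\gamma\int_0^t (t-\sigma)^{-(1-\delta)}\,\rmd\sigma \cdot \norm{g}_{L^\infty([\ld,\dd]_\delta)} = cT^{\gamma+\delta}\norm{g}$, again vanishing as $T\to 0$; the continuity of this term in $t$ (in both $\ld$ and $\dd$) is obtained by splitting the integral near $\sigma = t$ and using \eqref{r5}, \eqref{r3}. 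Third, and most delicately, the term $\int_0^t G(t,\sigma)\frac{\dot s(\sigma)}{s(\sigma)}x v_x(x,\sigma)\,\rmd\sigma$: here $v_x(\cdot,\sigma)$ is only guaranteed to lie in $[\ld,\dd]_\delta$ for $\delta < \frac{1}{2(1+\al)}$ (since $v(\cdot,\sigma)\in\dd$ does not force $v_x$ to vanish at $x=1$, as noted before the theorem), and multiplication by $x$ preserves this regularity. Using $\norm{v_x(\cdot,\sigma)}_{[\ld,\dd]_\delta} \leq c\sigma^{-\gamma}\norm{v}_{Y_\gamma}$ together with boundedness of $\dot s/s$ from \eqref{zals}, estimate \eqref{r1} yields $t^\gamma\norm{\cdot}_\dd \leq ct^\gamma \int_0^t (t-\sigma)^{-(1-\delta)}\sigma^{-\gamma}\,\rmd\sigma\,\norm{v}_{Y_\gamma}$, and the Beta-function integral equals $c\, t^{\delta-(1-\gamma)}\,t^\gamma \cdot t^{\gamma}$... more precisely it is $cT^{\delta}\norm{v}_{Y_\gamma}$ after collecting powers (this requires $\gamma<1$ for integrability of $\sigma^{-\gamma}$ and $\delta>0$ for integrability of $(t-\sigma)^{-(1-\delta)}$), so the factor is $O(T^\delta)$, again small. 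Linearity of $\mathcal{F}$ in $v$ makes the contraction estimate for $\mathcal{F}v_1 - \mathcal{F}v_2$ identical to the bound on the third term applied to $v_1-v_2$.

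\textbf{Main obstacle.} The delicate point is the self-consistency of the function space: one must verify that $t^\gamma(\mathcal{F}v)(\cdot,t)\in\dd$ with the right continuity, and in particular that the integral term involving $v_x$ is well-defined and continuous up to $t=0$ despite the $\sigma^{-\gamma}$ singularity of $\norm{v_x(\cdot,\sigma)}_{[\ld,\dd]_\delta}$ colliding with the $(t-\sigma)^{-(1-\delta)}$ singularity of the evolution operator; this forces the precise constraint $1-\theta < \gamma < 1$ (so that $\theta + \delta$-type exponents come out positive while $\sigma^{-\gamma}$ stays integrable) and is exactly why the statement restricts $\theta$ to the range $(0,\frac{1}{2(1+\al)})$ where $v_x$ retains interpolation regularity. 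Establishing continuity at $t=0$ of the $\dd$-valued parts — i.e. checking $t^\gamma\norm{\mathcal{F}v(\cdot,t)}_\dd\to 0$ and the corresponding Hölder-type continuity for $t>0$ via \eqref{r5} and \eqref{r3} — is the step requiring the most care, while the contraction property itself is then essentially free from linearity.
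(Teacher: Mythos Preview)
Your proposal is correct and follows essentially the same approach as the paper: define the integral operator, show it maps $Y_\gamma(0,T)$ to itself by estimating the three terms separately via \eqref{r0}--\eqref{r3} (using $v_0\in[\ld,\dd]_\theta$, Proposition~\ref{regularity-g} for $g$, and the observation that $v_x$ lies only in $[\ld,\dd]_\delta$ for $\delta<\frac{1}{2(1+\al)}$), then prove it is a contraction for short time and iterate. The paper treats the $g$ and $v_x$ integrals together as $f+g$ and uses the slightly sharper fact that $t^\gamma f\in L^\infty(0,T;{}_0H^\al(0,1))$ (so $\delta=\frac{\al}{1+\al}$ when $\al<\frac{1}{2}$), but this is a cosmetic difference; also, since the map is affine in $v$, no closed ball is needed---the contraction holds on all of $Y_\gamma(0,T_1)$, which you effectively note at the end.
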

\begin{proof}
In the proof we will apply the Banach fixed point theorem. To this end, we define operator $P$ by
\[
(Pv)(x,t)=  G(t,0)v_{0}(x) + \int_{0}^{t}G(t,\sigma)\frac{\dot{s}(\sigma)}{s(\sigma)}xv_{x}(x,\sigma)\rmd\sigma + \int_{0}^{t}G(t,\sigma)g(x,\sigma)\rmd \sigma.
\]
We will show that for $1-\theta<\gamma < 1$ there holds $P:Y_{\gamma}(0,T) \rightarrow Y_{\gamma}(0,T)$.
At first, we notice that by the property of evolution operators $G(t,0)v_{0} \in C([0,T];\ld)$ and taking into account the regularity of $v_0$ together with the characterization \eqref{p60}, Lemma \ref{A-estimation} and \eqref{r3} we deduce that $G(t,0)v_{0} \in C((0,T];\dd)$. Moreover, by (\ref{r1}) we have
\[
\norm{t^{\gamma}G(t,0)v_{0}}_{\dd} \leq ct^{\gamma+\theta-1}\norm{v_{0}}_{[\ld,\dd]_{\theta}}\rightarrow 0 \m{ as } t\rightarrow 0 \quad \text{ if }\, \gamma>1-\theta.
\]

\no Hence, $G(t,0)v_{0} \in Y_{\gamma}(0,T)$.
Now, let us denote
\eqq{
f(x,\sigma):=\frac{\dot{s}(\sigma)}{s(\sigma)}xv_{x}(x,\sigma).
}{deff}
If $v \in Y_{\gamma}(0,T)$, we obtain that
\eqq{
t^{\gamma}f \in L^{\infty}((0,T);{}_{0}H^{\al}(0,1))
}{vxm-alpha}
and hence, recalling \eqref{p60} and (\ref{regg})  we obtain that
\eqq{
t^{\gamma}[f+g] \in L^{\infty}((0,T);[\ld,\dd]_{\delta}),
}{vxm}
where $\delta = \frac{\al}{\al+1}$ in the case $\al \in (0,\frac{1}{2})$ and in the case $\al \in [\frac{1}{2},1)$ by $\delta$ we mean an arbitrary number from the interval $(0,\frac{1}{2(\al+1)})$.
For any $0\leq \tau<t \leq T$ we may estimate as follows
\[
\norm{\izt G(t,\sigma)[f+g](\cdot,\sigma)\rmd \sigma -\int_{0}^{\tau} G(\tau,\sigma)[f+g](\cdot,\sigma)\rmd \sigma}_{L^{2}(0,1)}
\]
\[
\leq \int_{\tau}^{t}\norm{G(t,\sigma)[f+g](\cdot,\sigma)}_{L^{2}(0,1)}\rmd \sigma + \int_{0}^{\tau}\hspace{-0.2 cm}\norm{(G(t,\sigma)-G(\tau,\sigma))[f+g](\cdot,\sigma)}_{L^{2}(0,1)}\rmd \sigma
\]
\[
\leq c \norm{t^{\gamma}[f+g]}_{L^{\infty}(0,T;\ld)}  \int_{\tau}^{t}\sigma^{-\gamma} \rmd\sigma + c \norm{t^{\gamma}[f+g]}_{L^{\infty}(0,T;[\ld,\dd]_{\delta})}  \int^{\tau}_{0}\sigma^{-\gamma}(t-\tau)^{\delta}\rmd\sigma,
\]
where in the last estimate we applied (\ref{r0}) to the first term and (\ref{r5}) to the second term. The expression above tends to zero
as $\tau \rightarrow t$ for any $0\leq \tau<t \leq T$, thus $\izt G(t,\sigma)[f+g](\cdot,\sigma)\rmd\sigma \in C([0,T],\ld)$. In view of Lemma \ref{A-estimation} it remains to show that $t^{\gamma}A(t)\izt G(t,\sigma)[f+g](\cdot,\sigma)\rmd\sigma \in C([0,T];\ld)$. Concerning continuity away form zero, since $t^\gamma$ is a continuous function, it is enough to show  that $A(t)\izt G(t,\sigma)[f+g](\cdot,\sigma)\rmd\sigma \in C((0,T];\ld)$.
For any $0< \tau<t \leq T$ we may estimate as follows
\[
B:=\norm{A(t)\izt G(t,\sigma)[f+g](\cdot,\sigma)\rmd\sigma -A(\tau)\int_{0}^{\tau} G(\tau,\sigma)[f+g](\cdot,\sigma)\rmd\sigma}_{L^{2}(0,1)}
\]

\[\leq
\int_{\tau}^{t}\norm{A(t)G(t,\sigma)[f+g](\cdot,\sigma)}_{L^{2}(0,1)}\rmd\sigma + \int_{0}^{\tau}\norm{(A(t)G(t,\sigma)-A(\tau)G(\tau,\sigma))[f+g](\cdot,\sigma)}_{L^{2}(0,1)}\rmd\sigma
\]
By (\ref{r2}) and (\ref{r3}) we may estimate  further,

\[
B \leq c \norm{t^{\gamma}[f+g]}_{L^{\infty}(0,T;[L^{2}(0,1),\dd]_{\delta})}\int_{\tau}^{t} \sigma^{-\gamma}(t-\sigma)^{\delta-1}\rmd\sigma
\]
\[
 + c \norm{t^{\gamma}[f+g]}_{L^{\infty}(0,T;[L^{2}(0,1),\dd]_{\delta})}\int_{0}^{\tau}\sigma^{-\gamma}\left[\frac{(t-\tau)^{a}}{(\tau-\sigma)^{1-\delta}} + \frac{1}{(\tau-\sigma)^{1-\delta}}-\frac{1}{(t-\sigma)^{1-\delta}}\right] \rmd\sigma
\]
for any $a \in (0,1)$ and the last expression tends to zero as $\tau\rightarrow t$ for any $0<\tau<t \leq T$. To show the continuity up to zero we notice that, by (\ref{r1}) we may estimate as follows
\[
\norm{t^{\gamma}\izt G(t,\sigma)[f+g](\cdot,\sigma)\rmd\sigma}_{\dd} \leq t^{\gamma}\izt \norm{G(t,\sigma)[f+g](\cdot,\sigma)}_{\dd}\rmd\sigma
\]
\[
\leq c t^{\gamma}\izt (t-\sigma)^{\delta-1}\norm{[f+g](\cdot,\sigma)}_{[\ld,\dd]_{\delta}}\rmd\sigma
\]
\[
\leq c \norm{t^{\gamma}[f+g]}_{L^{\infty}(0,T;[L^{2}(0,1),\dd]_{\delta})}t^{\gamma}\izt \sigma^{-\gamma} (t-\sigma)^{\delta-1}\rmd\sigma \rightarrow 0 \m{ as } t\rightarrow 0.
\]
Hence we proved that $\int_{0}^{t}G(t,\sigma)[f+g](x,\sigma)\rmd\sigma \in Y_{\gamma}(0,T)$ and thus
$P:Y_{\gamma}(0,T)\rightarrow Y_{\gamma}(0,T)$.\\
\\
\no Now, we will show that $P$ is a contraction on $Y_{\gamma}(0,T_{1})$ for $T_{1}$ small enough. We note that, if $ t^\gamma v \in  C([0, T]; \dd)$, then  $ t^\gamma v_x \in  C([0, T ]; \, _0H^\al(0,1))$ and  $t^\gamma xv_x \in C([0,T];\, _0H^\al(0, 1))$.

\no Then, if we take $v_{1},v_{2} \in Y_{\gamma}(0,T_{1})$, 
\[
P(v_{1})(x,t) - P(v_{2})(x,t)= \izt G(t,\sigma)\frac{\dot{s}(\sigma)}{s(\sigma)}x(v_{1,x} - v_{2,x})(x,\sigma)\rmd\sigma
\]
and applying (\ref{r0}) we obtain that
\[
\norm{P(v_{1}) - P(v_{2})}_{C([0,T_{1}],\ld)} \leq \sup_{t \in [0,T_{1}]}\izt\norm{G(t,\sigma)\frac{\dot{s}(\sigma)}{s(\sigma)}x(v_{1,x} - v_{2,x})(\cdot,\sigma)}_{\ld}\rmd\sigma
\]
\[
\leq  c \sup_{t \in [0,T_{1}]}\norm{\frac{\dot{s}(t)}{s(t)}t^{\gamma}x(v_{1,x} - v_{2,x})(\cdot,t)}_{\ld}\izt \sigma^{-\gamma}\rmd\sigma
\leq c\frac{M}{b}  \norm{v_{1}-v_{2}}_{Y_{\gamma}(0,T_{1})}\frac{T_{1}^{1-\gamma}}{1-\gamma},
\]
where the positive constant $c$ comes from (\ref{r0}).
Hence, there exists $\lambda \in (0,1)$ such that
\[
\norm{P(v_{1})(t) - P(v_{2})}_{C([0,T_{1}],\ld)} \leq \lambda \norm{v_{1}-v_{2}}_{Y_{\gamma}(0,T_{1})},
\]
whenever $T_{1} < \left(\frac{b(1-\gamma)}{c\bar{c}M}\right)^{\frac{1}{1-\gamma}}$.
Furthermore, applying (\ref{r1}) we have
\[
\sup_{t \in [0,T_{1}]}t^{\gamma}\norm{P(v_{1}) - P(v_{2})}_{\dd} \leq \sup_{t \in [0,T_{1}]}t^{\gamma}\izt\norm{G(t,\sigma)\frac{\dot{s}(\sigma)}{s(\sigma)}x(v_{1,x} - v_{2,x})(\cdot,\sigma)}_{\dd}d\sigma
\]
\[
\leq \sup_{t \in [0,T_{1}]}t^{\gamma}\norm{\frac{\dot{s}(t)}{s(t)}t^{\gamma}x(v_{1,x} - v_{2,x})(\cdot,t)}_{[\ld,\dd]_{\delta}}c\izt \sigma^{-\gamma}(t-\sigma)^{\delta-1}d\sigma
\]
\[
\leq c \frac{M}{b} \norm{v_{1}-v_{2}}_{Y_{\gamma}(0,T_{1})}\sup_{t \in [0,T_{1}]}t^{\gamma}\izt \sigma^{-\gamma}(t-\sigma)^{\delta-1}d\sigma
\leq c \frac{M}{b} \norm{v_{1}-v_{2}}_{Y_{\gamma}(0,T_{1})} T_{1}^{\delta}\frac{\Gamma(\delta)\Gamma(1-\gamma)}{\Gamma(1+\delta-\gamma)},
\]
where the constant $c>0$ comes from (\ref{r1}).
Thus, there exists $\lambda \in (0,1)$ such that
\[
\sup_{t \in [0,T_{1}]}t^\gamma\norm{P(v_{1})(t) - P(v_{2})}_{\dd} \leq \lambda \norm{v_{1}-v_{2}}_{Y_{\gamma}(0,T_{1})},
\]
whenever
\[
T_{1} < \left(\frac{b}{cM}\frac{\Gamma(1+\delta-\gamma)}{\Gamma(\delta)\Gamma(1-\gamma)}\right)^{\frac{1}{\delta}}.
\]
Finally, by the Banach fixed point theorem we obtain that there exists a unique solution to (\ref{wzorv}) belonging to $Y_{\gamma}(0,T_{1})$ for any
$T_{1} < \min\left\{\left(\frac{b(1-\gamma)}{cM}\right)^{\frac{1}{1-\gamma}},\left(\frac{b}{cM}\frac{\Gamma(1+\delta-\gamma)}{\Gamma(\delta)\Gamma(1-\gamma)}\right)^{\frac{1}{\delta}}\right\}.$
We may extend the solution on the whole interval $[0,T]$ applying the standard argument.
\end{proof}

 \begin{lemma}\label{second}
The mild solution $v$ obtained in Theorem \ref{first} satisfies additionally $v_{t} \in L^{\infty}_{loc}((0,T];\ld)$ and
  \[
  v_{t} - x\frac{\dot{s}(t)}{s(t)}v_{x} - \frac{1}{s^{1+\al}}\poch \da v = g
  \]
  in  $L^{2}(0,1)$, for almost all $t\in (0,T)$. 
  \end{lemma}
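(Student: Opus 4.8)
The plan is to exploit the analyticity of the evolution family $\{G(t,\sigma)\}$ together with the known regularity of the data $v_0$, $f$, and $g$ established in Theorem \ref{first} and Proposition \ref{regularity-g}. First I would decompose the mild-solution formula \eqref{wzorv} as $v = v_1 + v_2 + v_3$, where $v_1(t) = G(t,0)v_0$, $v_2(t) = \izt G(t,\sigma) f(\cdot,\sigma)\rmd\sigma$ with $f$ as in \eqref{deff}, and $v_3(t) = \izt G(t,\sigma) g(\cdot,\sigma)\rmd\sigma$. For the homogeneous term $v_1$, standard analytic-semigroup (evolution-operator) theory gives $v_1 \in C^1((0,T];\ld)$ with $\partial_t v_1(t) = A(t)v_1(t) + (\text{lower order in }t)$; more precisely, because $A(t)$ depends on $t$ only through the smooth scalar factor $s^{-1-\al}(t)$, the abstract equation $\partial_t v_1 = \frac{\p}{\p t}\left(\frac{1}{s^{1+\al}(t)}\right)\cdot(\dots)$ can be read off from the construction of $G$, and in any case $v_1(t)\in\dd$ for $t>0$ with the bound $\norm{v_1(t)}_{\dd}\le c t^{\theta-1}\norm{v_0}_{[\ld,\dd]_\theta}$ from \eqref{r1}, which is locally bounded away from $0$.

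For the two inhomogeneous terms, the key point is that $t^\gamma f$ and $g$ (indeed $t^\gamma[f+g]$) lie in $L^\infty_{loc}((0,T];[\ld,\dd]_\delta)$ for a suitable $\delta>0$ by \eqref{vxm} and \eqref{regg}, so they are Hölder continuous in time with values in $\ld$ on compact subintervals $[\e,T]$, after absorbing the harmless factor $t^\gamma$. Then the classical result on parabolic evolution equations with Hölder-continuous forcing (e.g. \cite[Chapter 6]{Lunardi}) applies: the convolution $\int_\e^t G(t,\sigma)F(\sigma)\rmd\sigma$ is continuously differentiable on $(\e,T]$ with values in $\ld$, belongs to $\dd$ for each $t$, and satisfies $\partial_t\left(\int_\e^t G(t,\sigma)F\rmd\sigma\right) = A(t)\int_\e^t G(t,\sigma)F\rmd\sigma + F(t)$ in $\ld$. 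Splitting $\izt = \int_0^\e + \int_\e^t$ and handling the piece $\int_0^\e$ as $G(t,\e)$ applied to an element of $\dd$ (which is again differentiable in $t$ for $t>\e$ by the same $v_1$-type argument, since $v_2(\e),v_3(\e)\in\dd$), we conclude $v_t\in L^\infty_{loc}((0,T];\ld)$ and, collecting the three contributions,
\[
v_t(\cdot,t) = A(t)v(\cdot,t) + f(\cdot,t) + g(\cdot,t) = \frac{1}{s^{1+\al}(t)}\poch\da v(\cdot,t) + x\frac{\dot{s}(t)}{s(t)}v_x(\cdot,t) + g(\cdot,t)
\]
in $\ld$ for a.e. $t\in(0,T)$, which is exactly the asserted identity.

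The main obstacle I expect is the forcing term $f(x,\sigma)=\frac{\dot s(\sigma)}{s(\sigma)}x v_x(x,\sigma)$: unlike $g$, it is not given a priori but built from the unknown $v$, and $\dot s$ is merely $L^\infty$ (from \eqref{zals}), so $\sigma\mapsto f(\cdot,\sigma)$ is not Hölder continuous in time — only bounded (with the weight $\sigma^\gamma$) into $[\ld,\dd]_\delta$. Hence the textbook "$C^\eta$ forcing" theorem does not apply directly to $v_2$. The way around this is the standard bootstrap/parabolic-regularity trick: one does not need the time derivative of $v_2$ in the strong sense everywhere, only that $v_2(\cdot,t)\in\dd$ for a.e.\ $t$ (which follows from $v\in Y_\gamma$) and that the map $t\mapsto A(t)v_2(t)$ is locally integrable, so that $v_2(t)-v_2(\e) = \int_\e^t\big(A(\sigma)v_2(\sigma)+f(\sigma)\big)\rmd\sigma$ holds as a Bochner integral in $\ld$; differentiating this identity (valid for a.e.\ $t$ since the integrand is in $L^1_{loc}$) yields $\partial_t v_2 = A(\cdot)v_2 + f$ a.e. One proves the integrated identity by approximating $f$ by smooth-in-time forcings, applying the classical result to each approximation, and passing to the limit using the estimates \eqref{r0}–\eqref{r2}; the weights $\sigma^{-\gamma}$ and $(t-\sigma)^{\delta-1}$ are integrable, so the limit passage is legitimate. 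Once the integrated form is in hand, the a.e.\ differentiation and the membership $v_t\in L^\infty_{loc}((0,T];\ld)$ are immediate from the local bounds $\norm{A(t)v(t)}_{\ld}\simeq\norm{v(t)}_{\dd}\le c t^{-\gamma}$ (Lemma \ref{A-estimation}) together with $\norm{f(t)}_{\ld}\le c t^{-\gamma}$ and $g\in L^\infty(0,T;\ld)$.
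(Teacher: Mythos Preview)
Your outline is essentially correct and would succeed, but it takes a noticeably different route from the paper's proof. The paper does not invoke the textbook ``H\"older-continuous forcing'' theorem at all, and therefore never has to face the obstacle you flag (that $\dot s$ is only $L^\infty$, so $\sigma\mapsto f(\cdot,\sigma)$ is not H\"older in time). Instead, the paper computes the difference quotient of the mild-solution formula directly: for the piece $\izt G(t,\sigma)f(\cdot,\sigma)\rmd\sigma$ one writes the quotient as
\[
\frac{1}{h}\int_0^t\bigl(G(t+h,\sigma)-G(t,\sigma)\bigr)f(\cdot,\sigma)\rmd\sigma \;+\; \frac{1}{h}\int_t^{t+h}G(t+h,\sigma)f(\cdot,\sigma)\rmd\sigma.
\]
The first integral is handled by dominated convergence, using the pointwise identity $\frac{\partial}{\partial p}G(p,\sigma)f=A(p)G(p,\sigma)f$ together with the bound \eqref{r2}, which only needs $\sigma^\gamma f\in L^\infty(0,T;[\ld,\dd]_\delta)$ (exactly \eqref{vxm}); no H\"older regularity in $\sigma$ is required. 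The second integral is split again and treated by the Lebesgue differentiation theorem in the Banach space $\ld$, which gives $f(\cdot,t)$ for a.e.\ $t$. The term with $g$ is handled identically. This yields $v_t=A(t)v+f+g$ in $\ld$ for a.e.\ $t$, and the $L^\infty_{loc}$ bound on $v_t$ follows from the right-hand side.

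Your approximation/bootstrap scheme (prove the integrated identity for smooth $f_n$, pass to the limit, then differentiate a.e.) would also work and is a legitimate alternative, but it is more indirect: you need to set up the approximation, check uniform-in-$n$ bounds, and justify the limit passage. The paper's approach simply exploits that the estimates \eqref{r0}--\eqref{r3} already provide an integrable dominating function, making the Lebesgue theorems do all the work. Also, a small correction to your first paragraph: there is no ``lower order in $t$'' term for $v_1$; the defining property of the evolution family gives $\partial_t G(t,0)v_0 = A(t)G(t,0)v_0$ exactly.
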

	
\begin{proof}
  Using definition (\ref{deff}) we may rewrite (\ref{wzorv}) as follows
  \eqq{
  v(x,t) = G(t,0)v_{0}(x) + \izt G(t,\sigma)f(x,\sigma)\rmd\sigma+ \izt G(t,\sigma)g(x,\sigma)\rmd\sigma.
  }{vcf}
  From the properties of evolution operators we infer that for every $t \in (0,T]$ we have
  \[
  \frac{\p}{\p t}G(t,0)v_{0} = A(t) G(t,0)v_{0} \m{ in } L^{2}(0,1).
  \]
  We will calculate the difference quotient of the second term on the right hand side of~(\ref{vcf}). Let us assume that $h > 0$, in the case $h<0$ the proof is similar.
  We have
  \[
  \frac{1}{h}\left[\int_{0}^{t+h} G(t+h,\sigma)f(x,\sigma)\rmd\sigma - \izt G(t,\sigma)f(x,\sigma)\rmd\sigma\right]
  \]
  \[
  = \frac{1}{h}\int_{0}^{t} (G(t+h,\sigma)-G(t,\sigma))f(x,\sigma)\rmd\sigma
  +\frac{1}{h}\int_{t}^{t+h} G(t+h,\sigma)f(x,\sigma)\rmd\sigma =: I_{1} + I_{2}.
  \]
  In order to deal with $I_{1}$ we recall that for every $0 \leq \sigma < t \leq T$ and every $z \in L^{2}(0,1)$ the following limit holds in $L^{2}(0,1)$
  \[
  \lim_{h\rightarrow 0}\frac{1}{h}(G(t+h,\sigma)-G(t,\sigma))z = A(t)G(t,\sigma)z.
  \]
    Making use of (\ref{vxm}) we obtain that $t^{\gamma}f \in L^{\infty}(0,T;[L^{2}(0,1),\dd]_{\delta})$, where $\delta = \frac{\al}{1+\al}$ for $\al \in (0,\frac{1}{2})$ and $\delta$ denotes any fixed number from the interval $(0,\frac{1}{2(1+\al)})$ if $\al \in [\frac{1}{2},1)$. Further, applying \eqref{r2} we obtain that  
  \[
  \norm{\frac{1}{h}[G(t+h,\sigma) - G(t,\sigma)]f(\cdot,\sigma)}_{L^{2}(0,1)} =
  \norm{\frac{1}{h}\int_{t}^{t+h}\frac{\p}{\p p}G(p,\sigma)f(\cdot,\sigma)\rmd p}_{L^{2}(0,1)}
  \]
  \[
  =\norm{\frac{1}{h}\int_{t}^{t+h}A(p)G(p,\sigma)f(\cdot,\sigma)\rmd p}_{L^{2}(0,1)}
  \leq \frac{c}{h}\int_{t}^{t+h}(p-\sigma)^{\delta-1}\rmd p\,\sigma^{-\gamma}\norm{t^{\gamma}f}_{L^{\infty}(0,T;[L^{2}(0,1),\dd]_{\delta})}
  \]
  \[
  \leq c(t-\sigma)^{\delta-1}\sigma^{-\gamma}\norm{t^{\gamma}f}_{L^{\infty}(0,T;[L^{2}(0,1),\dd]_{\delta})}.
  \]

 Hence, we may apply the Lebesgue dominated convergence theorem to pass to the limit under the integral sign in $I_{1}$ and we get
  \[
  \frac{1}{h}\int_{0}^{t} (G(t+h,\sigma)-G(t,\sigma))f(x,\sigma)d\sigma\rightarrow \int_{0}^{t} A(t)G(t,\sigma)f(x,\sigma)\rmd\sigma.
  \]
  We decompose $I_{2}$ as follows
  \[
  \frac{1}{h}\int_{t}^{t+h} G(t+h,\sigma)f(x,\sigma)\rmd\sigma= \frac{1}{h}\int_{t}^{t+h} G(t,\sigma)f(x,\sigma)\rmd\sigma
  \]\[
   +\frac{1}{h}\int_{t}^{t+h} (G(t+h,\sigma)-G(t,\sigma))f(x,\sigma)\rmd\sigma = I_{2,1} + I_{2,2}.
  \]
We note that due to the Lebesgue differentiation theorem  in Banach spaces we obtain that $I_{2,1}$ converges to $f(x,t)$ in $L^{2}(0,1)$ for almost all $t \in (0,T)$. For $I_{2,2}$, denoting by $E$ the identity, we have
\[
I_{2,2} = \frac{1}{h}\int_{t}^{t+h} (G(t+h,t)-E)G(t,\sigma)f(x,\sigma)\rmd\sigma =  \frac{(G(t+h,t)-E)}{h}\int_{t}^{t+h}G(t,\sigma)f(x,\sigma)\rmd\sigma.
\]
Thus, using again the Lebesgue differentiation theorem in Banach spaces and the continuity of $G(t,\cdot)$ in $L^{2}(0,1)$ we obtain that $I_{2,2}$ converges to zero in $L^{2}(0,1)$ for almost all $t \in (0,T)$.
Summing up the results we obtain that the following identity holds in $L^{2}(0,1)$ for almost all $t \in [0,T]$
 \[
  \lim_{h\rightarrow 0^{+}}\frac{1}{h}\left[\int_{0}^{t+h} G(t+h,\sigma)f(x,\sigma)\rmd\sigma - \izt G(t,\sigma)f(x,\sigma)\rmd\sigma\right]
  \]
   \[
   = A(t)G(t,0)v_{0}(x) + A(t)\izt G(t,\sigma)f(x,\sigma)\rmd\sigma + f(x,t).
  \]
Proceeding similarly we may obtain analogous identity for $g$ and hence applying the formula (\ref{vcf}) and the definitions of $f$ and $A$ we arrive at
\[
v_{t}(x,t) = \frac{1}{s^{1+\al}(t)}\poch \da v(x,t) + \frac{\dot{s}(t)}{s(t)}xv_{x}(x,t) + g(x,t)
\]
in $L^{2}(0,1)$, for almost all $t \in (0,T)$  and we obtain the claim of lemma. The regularity comes from \eqref{vxm} and \eqref{zals}.
  \end{proof}	
\subsection{Improved  regularity of the mild solution.}
Our goal now is to improve the regularity of the mild solution to \eqref{niecv}, given by Theorem \ref{first}. The proof of the existence of the solution to Stefan problem relies on the maximum principle. Because of that, here we would like to obtain that the solution to (\ref{niecv}) satisfies (\ref{regumax}).

As we have already noted, from the definition of $\dd$ we may not expect that $v_{x}$ vanishes at the right endpoint of the interval. Hence, even having $v$ smooth, we obtain that $v_{x}(\cdot,t) \in [L^{2}(0,1),\dd]_{\delta}$ only for $\delta < \frac{1}{2(1+\al)}$. That is why in a proof of increased regularity of the solution to (\ref{niecv}) we will make use of the following results.  

\begin{lemma}\label{lemusi}\cite[Lemma 2]{Rys:2020}
Let us assume that $\al \in (\frac{1}{2},1)$ and $u_{\sigma} \in {}_{0}H^{\al}(0,1)$. We denote by $u$ the solution to the problem
\begin{equation}\label{pol}
 \left\{ \begin{array}{ll}
u_{t} =A(t)u & \textrm{ for }\hd 0<x<1, \hd 0\leq \sigma<t<T,\\
u(x,\sigma) = u_{\sigma}(x) & \textrm{ for }\hd 0<x<1, \\
\end{array} \right. \end{equation}
given by the evolution operator generated by the family $A(t)$. Then, for every \hd $0<\gamma<\al$, for every $0<\ve<\omega<1$ there exists a positive constant $c=c(\al,b,M,T,\ve,\omega,\gamma)$, where $b,M$ comes from (\ref{zals}), such that 
\[
\norm{A(t)u(\cdot,t)}_{H^{\gamma}(\ve,\omega)} \leq c(t-\sigma)^{-\frac{1+\gamma}{1+\al}}\norm{u_{\sigma}}_{{}_{0}H^{\al}(0,1)}, \quad \text{for every} \, \, t \in (\sigma,T].
\]
\end{lemma}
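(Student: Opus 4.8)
The plan is to bound $A(t)u(\cdot,t)=A(t)G(t,\sigma)u_{\sigma}$ directly with the smoothing estimate \eqref{r2} in the interpolation scale $[\ld,\dd]_{\bullet}$. Taking there $\theta=\tfrac{\gamma}{1+\al}$ and $\delta=\tfrac{\al}{1+\al}$ — admissible since $0\le\theta<\delta<1$ precisely when $\gamma<\al$ — the exponent $1+\theta-\delta$ equals exactly $\tfrac{1+\gamma}{1+\al}$, the one in the statement. Moreover, since $\al\in(\tfrac{1}{2},1)$, the characterization \eqref{p60} identifies $[\ld,\dd]_{\gamma/(1+\al)}$ with a closed subspace of $H^{\gamma}(0,1)$ carrying an equivalent norm, so its norm dominates $\norm{\cdot}_{H^{\gamma}(\ve,\omega)}$. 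Thus the whole estimate would be immediate \emph{if} the datum belonged to $[\ld,\dd]_{\al/(1+\al)}$. By \eqref{p60} one has $[\ld,\dd]_{\al/(1+\al)}=\{w\in H^{\al}(0,1):\, w(1)=0\}$, and the only obstruction is that a function in ${}_{0}H^{\al}(0,1)$ need not vanish at the right endpoint.

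To circumvent this I would localize, exploiting that the bound is only required on $(\ve,\omega)$ with $\omega<1$. Fix $\eta\in C^{\infty}([0,1])$ with $\eta\equiv1$ on $[0,\omega_{0}]$ and $\supp\eta\subseteq[0,\omega_{1}]$ for some $\omega<\omega_{0}<\omega_{1}<1$, and write $u_{\sigma}=u_{\sigma}^{a}+u_{\sigma}^{b}$ with $u_{\sigma}^{a}:=\eta u_{\sigma}$, $u_{\sigma}^{b}:=(1-\eta)u_{\sigma}$. Then $u_{\sigma}^{a}\in H^{\al}(0,1)$ and $u_{\sigma}^{a}(1)=\eta(1)u_{\sigma}(1)=0$, so $u_{\sigma}^{a}\in[\ld,\dd]_{\al/(1+\al)}$ with $\norm{u_{\sigma}^{a}}_{[\ld,\dd]_{\al/(1+\al)}}\le c\,\norm{u_{\sigma}}_{{}_{0}H^{\al}(0,1)}$; hence \eqref{r2} with the exponents above gives
\[
\norm{A(t)G(t,\sigma)u_{\sigma}^{a}}_{H^{\gamma}(\ve,\omega)}\le\norm{A(t)G(t,\sigma)u_{\sigma}^{a}}_{[\ld,\dd]_{\gamma/(1+\al)}}\le\frac{c}{(t-\sigma)^{(1+\gamma)/(1+\al)}}\norm{u_{\sigma}}_{{}_{0}H^{\al}(0,1)},
\]
the claimed bound for the $u^{a}$-contribution.

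For the remaining piece $u_{\sigma}^{b}$, whose support is contained in $[\omega_{0},1]$ so that $u_{\sigma}^{b}\equiv0$ on $[0,\omega_{0}]\supseteq[\ve,\omega]$, I would use the one-sided (leftward) character of the Caputo and Riemann–Liouville operators: $\da$ and $\p^{\al}$ integrate only over $[0,x]$, so $\da u_{\sigma}^{b}\equiv0$ on $[0,\omega_{0}]$; propagating this along $u^{b}_{t}=A(t)u^{b}$ — each further time derivative of $A(t)u^{b}$ restricted to $[0,\omega_{0}]$ again involves only values on $[0,\omega_{0}]$, which vanish at $t=\sigma$ — one concludes that $u^{b}(\cdot,t):=G(t,\sigma)u_{\sigma}^{b}$ is flat to infinite order at $t=\sigma$ on $[0,\omega_{0}]$. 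Consequently $\norm{A(t)u^{b}(\cdot,t)}_{H^{\gamma}(\ve,\omega)}$ tends to $0$ faster than any power of $t-\sigma$ as $t\to\sigma^{+}$, while for $t-\sigma$ bounded away from $0$ it is controlled by the analytic-semigroup smoothing \eqref{r4}; in both regimes $\norm{A(t)G(t,\sigma)u_{\sigma}^{b}}_{H^{\gamma}(\ve,\omega)}\le c\,\norm{u_{\sigma}}_{{}_{0}H^{\al}(0,1)}$, and since $(t-\sigma)^{-(1+\gamma)/(1+\al)}$ is bounded below on $(\sigma,T]$ this absorbs into the right-hand side of the lemma. Adding the two contributions finishes the proof.

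I expect the genuinely delicate step to be the $u_{\sigma}^{b}$-part: making rigorous that the portion of the initial datum concentrated near $x=1$ has a negligible effect on $(\ve,\omega)$ — equivalently, the infinite-order flatness at $t=\sigma$ — is exactly where the leftward locality of the fractional flux is decisive, and it is what makes the statement correct even though $u_{\sigma}$ is not assumed to vanish at the right endpoint. The remaining checks are routine: the norm equivalences implicit in \eqref{p60}, the boundedness of the restriction $H^{\gamma}(0,1)\to H^{\gamma}(\ve,\omega)$, and the fact that the constants furnished by \eqref{r2} and \eqref{r4} depend only on $\al,\gamma,b,M,T$ together with the cutoff (hence on $\ve,\omega$).
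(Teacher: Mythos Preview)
The paper does not prove this lemma; it is quoted from \cite{Rys:2020}. So there is no in-paper argument to compare with, and I assess your proposal on its own.

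Your treatment of $u_{\sigma}^{a}=\eta u_{\sigma}$ is correct and efficient: for $\al\in(\tfrac12,1)$ the identification \eqref{p60} gives $[\ld,\dd]_{\al/(1+\al)}=\{w\in H^{\al}(0,1):w(1)=0\}$, the cutoff forces $u_{\sigma}^{a}(1)=0$, and \eqref{r2} with $\theta=\gamma/(1+\al)$, $\delta=\al/(1+\al)$ produces exactly the exponent $-(1+\gamma)/(1+\al)$. This part is fine.

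The gap is in the $u_{\sigma}^{b}$ piece. Your argument rests on two implications, neither of which is justified. First, ``all time derivatives of $u^{b}$ vanish at $t=\sigma$ on $[0,\omega_{0}]$'' is a formal statement: since $u_{\sigma}^{b}(1)=u_{\sigma}(1)$ is in general nonzero, $u_{\sigma}^{b}\notin\dd=D(A(\sigma))$, so $A(\sigma)u_{\sigma}^{b}$, let alone $A(\sigma)^{k}u_{\sigma}^{b}$, is not defined as an element of $L^{2}$ for the generator. The leftward pointwise structure of the differential expression $\partial_{x}D^{\al}$ does not transfer to the operator $A(\sigma):\dd\to L^{2}$, whose domain encodes the boundary condition at $x=1$. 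Second, even granting formal flatness at $t=\sigma$, you cannot conclude that $\norm{A(t)u^{b}(\cdot,t)}_{H^{\gamma}(\ve,\omega)}\to0$ faster than any power: that would require analyticity of $t\mapsto u^{b}(\cdot,t)$ \emph{up to} $t=\sigma$, which analytic evolution operators do not provide for data outside $\bigcap_{k}D(A^{k})$. In fact, the coupling through the Dirichlet condition at $x=1$ does propagate into $(\ve,\omega)$ for $t>\sigma$; a spectral expansion $u^{b}(\cdot,t)=\sum_{n}e^{\lambda_{n}\int_{\sigma}^{t}s^{-(1+\al)}}c_{n}\phi_{n}$ shows that $u^{b}$ is generically nonzero on $[0,\omega_{0}]$, so nothing forces the local norm to stay bounded, let alone decay, by your mechanism.

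What is actually needed to recover the sharp exponent for data that fail the trace condition at $x=1$ is a genuinely \emph{local} device inside $(0,1)$: either commutator estimates of the type $[\partial_{x}D^{\al},\chi]$ with an interior cutoff $\chi$, combined with Lemma~\ref{local}, or a direct analysis of the resolvent/semigroup kernel on $(\ve,\omega)$. Your decomposition reduces the problem to exactly the hard case without supplying such a tool. The last paragraph of your proposal correctly flags this as the delicate step, but the resolution you sketch does not survive the objections above.
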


\begin{lemma}\label{lemusi2}\cite[Lemma 3]{Rys:2020}
Let $0<\al \leq \frac{1}{2}$. Let us assume that $u_{\sigma} \in H^{\beta}_{loc}(0,1) \cap H^{\bar{\gamma}}(0,1)$, where $\frac{1}{2}<\beta<1$ and $0<\bar{\gamma} < \frac{1}{2}$ are fixed. We denote by $u$ the solution to the problem
\begin{equation}\label{polc}
 \left\{ \begin{array}{ll}
u_{t} =A(t)u & \textrm{ for } 0<x<1, \hd 0\leq \sigma<t<T,\\
u(x,\sigma) = u_{\sigma}(x) & \textrm{ for } 0<x<1, \\
\end{array} \right. \end{equation}
given by the evolution operator generated by the family $A(t)$. Then, for every  $\max\{\beta-\al, \beta - \bar{\gamma}\}<\beta_{1}<\beta$, for every $0<\ve<\omega<1$, there exists a positive constant $c=c(\al,b,M,T,\ve,\omega,\beta,\beta_{1})$, such that for every $t \in (\sigma,T]$ there holds
\[
\norm{A(t)u(\cdot,t)}_{H^{\beta_{1}}(\ve,\omega)} \leq c(t-\sigma)^{-\frac{\beta_{1}-\beta+\al+1}{1+\al}}(\norm{u_{\sigma}}_{H^{\beta}(\frac{\ve}{2},\frac{1+\omega}{2})}+\norm{u_{\sigma}}_{H^{\bar{\gamma}}(0,1)}).
\]
\end{lemma}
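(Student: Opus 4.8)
The plan is to localise the initial datum, split the solution by linearity, and treat the two pieces by different mechanisms: global analytic–semigroup smoothing for the part that is genuinely $H^{\beta}$, and a separate nonlocal/interior argument for the part living near the endpoints. Fix $\beta_{1}$ in the admissible range, pick a cutoff $\psi\in C^{\infty}$ with $\psi\equiv 1$ on a neighbourhood of $[\ve,\omega]$ and $\supp\psi\subset(\ve/2,(1+\omega)/2)$, and write $u_{\sigma}=\psi u_{\sigma}+(1-\psi)u_{\sigma}$; since $u(\cdot,t)=G(t,\sigma)u_{\sigma}$, this gives $A(t)u(\cdot,t)=A(t)G(t,\sigma)[\psi u_{\sigma}]+A(t)G(t,\sigma)[(1-\psi)u_{\sigma}]$, and I would bound the two summands on $(\ve,\omega)$ separately.

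For the first summand: because $\beta<1<1+\al$ and $\psi u_{\sigma}$ vanishes near $x=1$, one has $\psi u_{\sigma}\in H^{\beta}(0,1)$ with $\psi u_{\sigma}(1)=0$, hence by \eqref{p60} $\psi u_{\sigma}\in[\ld,\dd]_{\beta/(1+\al)}$ with $\norm{\psi u_{\sigma}}_{[\ld,\dd]_{\beta/(1+\al)}}\le c\norm{u_{\sigma}}_{H^{\beta}(\ve/2,(1+\omega)/2)}$. Applying \eqref{r2} with $\delta=\beta/(1+\al)$ and $\theta=\beta_{1}/(1+\al)$ — admissible since $\beta_{1}<\beta$ forces $0\le\theta<\delta<1$ — gives $\norm{A(t)G(t,\sigma)[\psi u_{\sigma}]}_{[\ld,\dd]_{\theta}}\le c(t-\sigma)^{-(1+\theta-\delta)}\norm{\psi u_{\sigma}}_{[\ld,\dd]_{\delta}}$, where $1+\theta-\delta=\frac{1+\al+\beta_{1}-\beta}{1+\al}>0$. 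Since in every subcase of \eqref{p60} the space $[\ld,\dd]_{\theta}$ embeds continuously into $H^{\beta_{1}}(0,1)$, and restricting the Sobolev norm from $(0,1)$ to $(\ve,\omega)$ only decreases it, this summand is controlled by exactly the first term of the asserted bound.

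The remaining term $\norm{A(t)G(t,\sigma)[(1-\psi)u_{\sigma}]}_{H^{\beta_{1}}(\ve,\omega)}$ is the crux, and the step I expect to be hardest: the datum is supported away from $[\ve,\omega]$ but has only the weak global regularity $H^{\bar\gamma}$, $\bar\gamma<\tfrac12$ (this is the portion of $u_{\sigma}$ near $x=0$, where solutions behave like $x^{\al}$, and near $x=1$, where $\da u$ does not vanish). The two structural facts I would use are that $\da$ is one–sided, so that on $(0,\omega)$ the equation $u_{t}=A(t)u$ involves $u$ only on $(0,\omega)$, and the interior regularity estimate of Lemma \ref{local} together with Proposition \ref{eq} and Corollary \ref{eqc}, which convert $L^{2}$–smoothing of $\p^{\al}u_{x}=s^{1+\al}A(t)u$ into interior Sobolev smoothing of $u$ itself. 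Concretely I would either rebuild the evolution family $G(t,\sigma)$ from Theorem \ref{analitycznosc} while tracking the norm $H^{\beta_{1}}(\ve,\omega)$, estimating the commutator terms produced by the cutoff through \eqref{r0}–\eqref{r5} and Lemma \ref{local} and using $\mathrm{dist}(\supp(1-\psi)u_{\sigma},(\ve,\omega))>0$; or split $[\sigma,t]$ at its midpoint, place $G(\cdot,\sigma)[(1-\psi)u_{\sigma}]$ in $\dd$ by \eqref{r1}, and bootstrap the interior regularity in finitely many steps with Lemma \ref{local}, each step raising the interior Sobolev exponent by less than $1+\al$ at the price of the matching negative power of the remaining time. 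The constraints $\beta_{1}>\beta-\al$ and $\beta_{1}>\beta-\bar\gamma$ are precisely what make the interpolation identifications \eqref{p60}, the admissibility ranges in \eqref{r0}–\eqref{r5}, and the requirement $\beta\in(\tfrac12,1]$ in Lemma \ref{local} mutually compatible, and they guarantee that the accumulated negative power does not exceed $\frac{1+\al+\beta_{1}-\beta}{1+\al}$; the resulting constant then depends only on $\al,b,M,T,\ve,\omega,\beta,\beta_{1}$, as claimed.
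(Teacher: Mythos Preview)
This lemma is not proved in the paper itself --- it is quoted verbatim from \cite[Lemma~3]{Rys:2020} and used as a black box, so there is no in-paper argument against which to compare yours.

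On the substance of your proposal: the cutoff decomposition and the handling of the compactly supported piece $\psi u_{\sigma}$ are correct. Since $\al\le\tfrac12$ and $\tfrac12<\beta<1$, the characterisation \eqref{p60} indeed gives $\psi u_{\sigma}\in[\ld,\dd]_{\beta/(1+\al)}$ (only the trace at $x=1$ is at stake, and $\psi$ annihilates it), and \eqref{r2} with $\theta=\beta_{1}/(1+\al)$, $\delta=\beta/(1+\al)$ produces precisely the exponent $\tfrac{1+\al+\beta_{1}-\beta}{1+\al}$ together with the first term on the right of the claimed estimate.

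The second summand, however, is left as a programme. You name two routes but carry out neither, and the central difficulty is not resolved: a direct global application of \eqref{r2} to $(1-\psi)u_{\sigma}\in H^{\bar\gamma}(0,1)$ yields the strictly worse power $(t-\sigma)^{-\frac{1+\al+\beta_{1}-\bar\gamma}{1+\al}}$, since $\bar\gamma<\beta$. Your observation that the hypotheses $\beta_{1}>\beta-\al$ and $\beta_{1}>\beta-\bar\gamma$ are exactly what should close this gap is reasonable, but without an explicit mechanism --- how Lemma~\ref{local}, the one-sidedness of $\da$, and the separation of supports combine to recover the $\beta$-dependent exponent from data that is only $H^{\bar\gamma}$ near the endpoints --- this remains an assertion rather than an argument. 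In short: the easy half is done cleanly; the half you correctly flag as the crux is still only an outline.
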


\begin{lemma}\label{third}
 Let us assume that $h \in W^{1,\infty}(0,T)$, $v_{0} \in {}^{0}H^{\theta(\al+1)}(0,1)$ for fixed $\theta \in (\frac{1}{2(1+\al)},1]$, and that  $v$ is the solution to (\ref{niecv}) obtained in Theorem~\ref{first}. Then the following results hold:

\begin{enumerate}

\item For every $\al\in (0,1)$ and every $\bar{\gamma}\in \left(0,\frac{1}{2}\right)$ there holds
\begin{equation}\label{vxb}
v_{x} \in L^{\infty}_{loc}(0,T;{_0}H^{\al+\bar{\gamma}}(0,1)).
\end{equation}

\item There exists $\hat{\gamma}>\frac{1}{2(1+\al)}$ such that 
\begin{equation}\label{reggammahat} v \in L^{\infty}(0,T;[\ld,\dd]_{\hat{\gamma}}).\end{equation}
\item There exists $\beta>\frac{1}{2}$ such that
\begin{equation}\label{regvbeta}
v \in L^{\infty}_{loc}(0,T;H^{\beta+1+\al}_{loc}(0,1)) \m{ and } \p^{\al}v_{x} \in L^{\infty}_{loc}(0,T;H^{\beta}_{loc}(0,1)).
\end{equation}

\item Moreover, there exists $\beta>\frac{1}{2}$ such that
\begin{equation}\label{regv-cont}
v \in C((0,T];H^{\beta+1+\al}_{loc}(0,1)) \m{ and } \p^{\al}v_{x} \in C((0,T];H^{\beta}_{loc}(0,1)).
\end{equation}

\end{enumerate}

\end{lemma}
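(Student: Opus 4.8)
All four assertions are proved by a bootstrap based on the mild--solution formula \eqref{vcf} and on the regularity $v\in Y_{\gamma}(0,T)$ supplied by Theorem \ref{first}: at each step the current spatial regularity of $v$ is fed into \eqref{vcf}, upgraded by the smoothing properties \eqref{r0}--\eqref{r3} of the evolution operators and by the local estimates of Lemma \ref{lemusi} (for $\al\in(\tfrac12,1)$) and Lemma \ref{lemusi2} (for $\al\in(0,\tfrac12]$), and translated back into regularity of $v$ and $v_{x}$ via the isomorphism properties of $\ia$ and $\p^{\al}$ (Proposition \ref{eq}, Corollary \ref{eqc}, Proposition \ref{propo frac}), Lemma \ref{local}, and the interpolation characterization \eqref{p60}. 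The new input with respect to Theorem \ref{first} is the stronger hypothesis $v_{0}\in{}^{0}H^{\theta(\al+1)}(0,1)$ with $\theta>\tfrac{1}{2(1+\al)}$, which (recalling \eqref{p60}) places $v_{0}$ in the interpolation space $[\ld,\dd]_{\theta}$: since $G(t,0)$ is bounded on $[\ld,\dd]_{\theta}$ and, choosing $\gamma$ close enough to $1-\theta$, the Duhamel integrals are at least as regular by \eqref{r4}, one obtains immediately $v\in L^{\infty}(0,T;[\ld,\dd]_{\theta})$, which is \eqref{reggammahat} with $\hat\gamma=\theta$.

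\emph{Proof of \eqref{vxb}.} From $v\in Y_{\gamma}(0,T)$ we have $v(\cdot,t)\in\dd$ for $t>0$, hence $v_{x}(\cdot,t)\in{}_{0}H^{\al}(0,1)$; since $b>0$ makes $\dot s/s$ bounded and multiplication by the smooth function $x$ preserves ${}_{0}H^{\al}(0,1)$, the transport term $f(\cdot,t)=\tfrac{\dot s(t)}{s(t)}xv_{x}(\cdot,t)$ of \eqref{deff} lies in ${}_{0}H^{\al}(0,1)$ with $\norm{f(\cdot,t)}_{{}_{0}H^{\al}}\le ct^{-\gamma}$, while by Proposition \ref{regularity-g} $g\in L^{\infty}(0,T;{}_{0}H^{\beta}(0,1))$ for every $\beta<\al+\tfrac12$ and $g\in L^{\infty}(0,T;H^{1+\al}(\ve,1))$ for every $\ve\in(0,1)$. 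Inserting this into \eqref{vcf}: the term $G(t,0)v_{0}$ enters $\dd$ for $t>0$ by \eqref{r1}--\eqref{r2} and is then subject to the same interior estimates, while each $G(t,\sigma)[f+g](\sigma)$ is treated as the homogeneous evolution at time $t$ started from the data $[f+g](\sigma)$ --- applying Lemma \ref{lemusi} when $\al>\tfrac12$ (since $[f+g](\sigma)\in{}_{0}H^{\al}$), and when $\al\le\tfrac12$ applying Lemma \ref{lemusi2} to the $g$--part (legitimate because $g(\sigma)\in H^{\beta}_{loc}(0,1)\cap H^{\bar\gamma}(0,1)$ for some $\beta\in(\tfrac12,1)$, $\bar\gamma\in(0,\tfrac12)$) together with the semigroup bounds \eqref{r1}--\eqref{r2} for the $f$--part. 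In all cases the exponents produced stay below $1$, the $\sigma$--integrals converge, and one obtains $\p^{\al}v_{x}(\cdot,t)\in H^{\bar\gamma}_{loc}(0,1)$. This interior gain is complemented at the two endpoints: near $x=0$ the splitting \eqref{descomp-w}--\eqref{defv0} together with $\poch\da x^{\al}=0$ cancels the leading $x^{\al-1}$--singularity of $v_{x}$, so $v_{x}(\cdot,t)$ behaves there like a positive power of $x$ and belongs to ${}_{0}H^{\al+\bar\gamma}$; near $x=1$ the forcing vanishes ($\vf\equiv0$ there) and $v(1,t)=0$, which yields the matching one--sided estimate. Corollary \ref{eqc} then assembles these into $v_{x}(\cdot,t)\in{}_{0}H^{\al+\bar\gamma}(0,1)$ for every $\bar\gamma<\tfrac12$, which is \eqref{vxb}.

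\emph{Proof of \eqref{regvbeta} and \eqref{regv-cont}.} With \eqref{vxb} at hand, $v_{x}(\cdot,\sigma)\in H^{\beta_{0}}_{loc}(0,1)$ for some $\beta_{0}>\tfrac12$ and $[f+g](\cdot,\sigma)\in H^{\beta_{0}}_{loc}(0,1)\cap H^{\bar\gamma}(0,1)$, so Lemma \ref{lemusi}/\ref{lemusi2} may be applied once more to the terms of \eqref{vcf}; the $\sigma$--integrals again converge and give $A(t)v(\cdot,t)=s^{-(1+\al)}\p^{\al}v_{x}(\cdot,t)\in H^{\beta}_{loc}(0,1)$ for some $\beta>\tfrac12$. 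Then Lemma \ref{local}, applied to $v_{x}\in{}_{0}H^{\al}(0,1)$ with $\p^{\al}v_{x}\in H^{\beta}_{loc}(0,1)$, gives $v_{x}\in H^{\al+\beta}_{loc}(0,1)$, i.e.\ $v\in H^{1+\al+\beta}_{loc}(0,1)$, which is \eqref{regvbeta}. For \eqref{regv-cont} the same scheme is run on the difference $v(\cdot,t)-v(\cdot,r)$: one estimates $\norm{A(t)v(\cdot,t)-A(r)v(\cdot,r)}_{H^{\beta}(\ve,\om)}$ by means of the difference bounds \eqref{r5}, \eqref{r3} and the local analogues of Lemma \ref{lemusi} and Lemma \ref{lemusi2} (derived by the same argument from \eqref{r5}, \eqref{r3}), and concludes $v\in C((0,T];H^{1+\al+\beta}_{loc})$ and $\p^{\al}v_{x}\in C((0,T];H^{\beta}_{loc})$ from the estimate \eqref{nowelc} of Lemma \ref{local} applied to this difference.

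\emph{Main obstacle.} The crux is that the abstract scale generated by the operator $\poch\da$ stops at $\dd\simeq H^{1+\al}$ with its built--in endpoint conditions, so \eqref{r0}--\eqref{r3} by themselves cannot raise $v_{x}$ above ${}_{0}H^{\al}$: there is a genuine compatibility obstruction at $x=1$, where $v_{x}$ need not vanish --- which is exactly why $g\notin[\ld,\dd]_{\delta}$ for $\delta>\tfrac{1}{2(1+\al)}$. Every gain beyond $\dd$ must therefore be extracted locally through Lemma \ref{lemusi} and Lemma \ref{lemusi2}, while keeping the singular kernels $(t-\sigma)^{-\#}$ in the Duhamel integrals below the integrable threshold --- which limits the regularity gained per step and forces iteration, particularly when $\al\le\tfrac12$, where Lemma \ref{lemusi2} cannot be applied to the transport term $f$ until \eqref{vxb} has already been obtained --- and handling the initial layer at $t=0$ via the $L^{\infty}_{loc}$ formulation (first reach $\dd$ for $t>0$, then iterate). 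The two most delicate points are turning the local interior gains into the global statement \eqref{vxb} through the analysis near $x=0$ and near $x=1$, and upgrading the $L^{\infty}_{loc}$--in--time estimates to the continuity statement \eqref{regv-cont}.
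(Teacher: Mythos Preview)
Your overall architecture is right --- bootstrap on the mild formula, use Lemmas \ref{lemusi}/\ref{lemusi2} for the local gains in item 3, and Lemma \ref{local} to translate $\p^{\al}v_{x}$--regularity into $v$--regularity --- but your proof of item 1 has a genuine gap. You try to obtain the \emph{global} statement $v_{x}\in{}_{0}H^{\al+\bar\gamma}(0,1)$ by patching interior estimates with separate endpoint arguments, and both endpoint arguments fail: near $x=1$ the forcing does \emph{not} vanish (while $\vf\equiv0$ on $[\tfrac34,1]$, the nonlocal quantity $\poch\da\vf$ does not, and the transport term $f=(\dot s/s)xv_{x}$ is certainly nonzero there), and near $x=0$ your ``cancellation of the $x^{\al-1}$ singularity'' is circular since $v$ is already the regular part. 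The paper avoids all endpoint patching by observing that estimate \eqref{r2} gives $A(t)v\in[L^{2}(0,1),\dd]_{\bar\theta}$ \emph{globally} for any $\bar\theta<\tfrac{1}{2(1+\al)}$, and by the characterization \eqref{p60} this interpolation space is just $H^{(1+\al)\bar\theta}(0,1)$ with no boundary condition; hence $\p^{\al}v_{x}\in H^{\bar\gamma}(0,1)$ globally for every $\bar\gamma<\tfrac12$, and Corollary \ref{eqc} gives $v_{x}\in{}_{0}H^{\al+\bar\gamma}(0,1)$ directly. For $\al\le\tfrac12$ one cannot reach $\bar\gamma$ close to $\tfrac12$ in one step because $t^{\gamma}[f+g]$ is initially only in $[L^{2},\dd]_{\delta}$ with $\delta\le\tfrac{\al}{1+\al}$; the paper therefore iterates \emph{within the global interpolation scale}: one pass of \eqref{r2} raises $\p^{\al}v_{x}$ to $H^{\gamma_{0}}$ with $\gamma_{0}<\al$, whence $v_{x}\in{}_{0}H^{\al+\gamma_{0}}$ and $t^{\gamma}[f+g]\in{}_{0}H^{\gamma_{1}}$ for $\gamma_{1}<2\al$, and so on until $(k+1)\al>\tfrac12$. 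You allude to iteration in your ``main obstacle'' paragraph, but the body of your argument does not carry it out and, worse, places it in local spaces where the endpoint issues would reappear.

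Two smaller points. First, your claim that item 2 holds with $\hat\gamma=\theta$ is not quite right: the Duhamel integral is only controlled in $[L^{2},\dd]_{\hat\gamma}$ when $1-\gamma>\hat\gamma-\delta$ with $\delta<\tfrac{1}{2(1+\al)}$, and since Theorem \ref{first} only supplies $\gamma>1-\tfrac{1}{2(1+\al)}$ this forces $\hat\gamma<\tfrac{1}{1+\al}$, which can be smaller than $\theta$; the paper simply picks $\hat\gamma$ just above $\tfrac{1}{2(1+\al)}$. Second, for item 4 you propose estimating differences $v(\cdot,t)-v(\cdot,r)$ via local analogues of \eqref{r3}, \eqref{r5}; this would require proving those analogues. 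The paper's route is much shorter: interpolate between $v\in C((0,T];\dd)$ from Theorem \ref{first} and $v\in L^{\infty}_{loc}(0,T;H^{\beta+1+\al}_{loc})$ from item 3 to get $v\in C((0,T];H^{\bar\beta+1+\al}_{loc})$ for any $\bar\beta<\beta$, and likewise for $\p^{\al}v_{x}$.
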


\begin{proof}
We divide the proof into a few cases. Before start, we would notice that the constant $\gamma$ given in Theorem \ref{first} can be an arbitrary number in the interval $\left(1-\frac{1}{2(\al+1)}, 1\right)$ due to the fact that  the regularity assumed over the initial condition in this Lemma is stronger than the one required Theorem \ref{first}.

\vspace{0.5cm}

{\bf Case 1: $\al\in \left(\frac{1}{2},1\right)$}

\begin{enumerate}

\item Let $\bar{\gamma}\in \left(0,\frac{1}{2}\right)$. Then, $\frac{\bar{\gamma}}{1+\al}\in \left(0,\frac{1}{2(1+\al)}\right)$ and from \eqref{vxm} $t^\gamma (f+g)\in L^{\infty}(0,T;[L^2(0,1),\dd]_{\frac{\bar{\gamma}}{1+\al}})$. 

We apply the operator $A(t)$ to (\ref{vcf}) and estimate its norm in an interpolation space to obtain that
\begin{equation}
\begin{array}{l}
\norm{A(t)v}_{[L^2(0,1),\dd]_{\frac{\bar{\gamma}}{1+\al}}} \leq\\
\leq \norm{A(t)G(t,0)v_{0}}_{[L^2(0,1),\dd]_{\frac{\bar{\gamma}}{1+\al}}} + \norm{A(t)\izt G(t,\sigma)[f(\cdot,\sigma)+g(\cdot,\sigma)]d\sigma}_{[L^2(0,1),\dd]_{\frac{\bar{\gamma}}{1+\al}}}\\
\leq \norm{A(t)G(t,0)v_{0}}_{[L^2(0,1),\dd]_{\frac{\bar{\gamma}}{1+\al}}} + \izt \sigma^{-\gamma}\norm{A(t)G(t,\sigma)\sigma^{\gamma}[f(\cdot,\sigma)+g(\cdot,\sigma)]}_{[L^2(0,1),\dd]_{\frac{\bar{\gamma}}{1+\al}}}\rmd\sigma\\
\end{array}
\end{equation}
\normalsize
Now we consider $0<\bar{\gamma}<\mu<\frac{1}{2}$ and from \eqref{r2} we obtain that
\begin{equation}
\begin{array}{l}
\norm{A(t)v}_{[L^2(0,1),\dd]_{\frac{\bar{\gamma}}{1+\al}}} \leq\\
\leq c_1 t^{-1-\frac{\bar{\gamma}}{1+\al}+\theta}\hspace{-0.1cm}\norm{v_{0}}_{[L^2(0,1),\dd]_{\theta}}\hspace{-0.1cm} + \hspace{-0.1cm}c_2 \izt(t-\sigma)^{-1-\frac{\bar{\gamma}}{1+\al}+\frac{\mu}{1+\al}}\sigma^{-\gamma}\rmd \sigma \norm{t^{\gamma}(f+g)}_{L^{\infty}(0,T;[L^2(0,1),\dd]_{\frac{\mu}{1+\al}})}\\
= c_1 t^{-1-\frac{\bar{\gamma}}{1+\al}+\theta}\norm{v_{0}}_{[L^2(0,1),\dd]_{\theta}}+ c_2 \left(\frac{\mu-\bar{\gamma}}{1+\al},1-\gamma\right) t^{\frac{\mu-\bar{\gamma}}{1+\al}-\gamma}\norm{t^{\gamma}(f+g)}_{L^{\infty}(0,T;[L^2(0,1),\dd]_{\frac{\mu}{1+\al}})}.
\end{array}
\end{equation}
\normalsize

Thus, $A(t)v \in L^{\infty}_{loc}(0,T;[L^2(0,1),\dd]_{\frac{\bar{\gamma}}{1+\al}})$ for every $\bar{\gamma} \in (0,\frac{1}{2})$ which in view of (\ref{zals}) and Corollary~\ref{eqc} leads to (\ref{vxb}) in the case $\al \in  (\frac{1}{2},1)$ due to the fact that  $[L^2(0,1),\dd]_{\frac{\bar{\gamma}}{1+\al}}={_0}H^{\bar{\gamma}}(0,1) $  when $\bar{\gamma}\in \left(0,\frac 1 2\right)$.

\item Let $\gamma$ be the constant given in Theorem \ref{first}. Then $1-\gamma>0$ and we can set $k\in \bbN$ such that $\hat{\gamma}:=\frac{1}{2(1+\al)}+\frac{1-\gamma}{k}$ verifies that 
\begin{equation}\label{gammahat}  \hat{\gamma}<\theta,\quad \quad \hat{\gamma}<\frac{3}{2(1+\al)}.\end{equation}
Form the assumption on $v_0$ and \eqref{p60} it holds that $v_0 \in \,^0H^{\theta(1+\al)}(0,1)\subset \,^0H^{\hat{\gamma}(1+\al)}(0,1)=[L^2(0,1),\dd]_{\hat{\gamma}} $ and we can apply (\ref{r4}) to (\ref{wzorv}) to obtain that for every $\delta<\frac{1}{2(1+\al)}<\hat{\gamma}$
\begin{equation}
\begin{split}
\norm{v(\cdot,t)}_{[\ld,\dd]_{\hat{\gamma}}} \leq \norm{G(t,\sigma)v_{0}}_{[\ld,\dd]_{\hat{\gamma}}}
+ \izt \norm{G(t,\sigma)[f(\cdot,\sigma) + g(\cdot,\sigma)]}_{[\ld,\dd]_{\hat{\gamma}}}d\sigma
\\
\leq c_1\norm{v_{0}}_{[\ld,\dd]_{\hat{\gamma}}} + c_2 \norm{t^{\gamma}[f + g]}_{L^{\infty}(0,T);[\ld,\dd]_{\delta}}\izt (t-\sigma)^{\delta - \hat{\gamma}}\sigma^{-\gamma}d\sigma
\\
\leq c_1\norm{v_{0}}_{[\ld,\dd]_{\hat{\gamma}}} + c_2 B(1+\delta-\hat{\gamma}, 1-\gamma) t^{\delta-\hat{\gamma}+1-\gamma} \norm{t^{\gamma}[f + g]}_{L^{\infty}(0,T);[\ld,\dd]_{\delta}}.
\end{split}
\end{equation}
By choosing a particular $\delta:=\frac{1}{2(1+\al)}-\frac{1-\gamma}{k+1}$ it holds that  $1-\gamma-(\hat{\gamma}-\delta) >0$ and \eqref{reggammahat} holds.


\item Let $\delta=\frac{\al}{1+\al}>\frac{1}{2(1+\al)}$. Then $\delta (1+\al)>1/2$ and we can not work in the interpolation spaces. However we can apply Lemma \ref{lemusi} to obtain that for any $0<\ve<\omega<1$ and any $0 < \bar{\theta} < \delta$

\begin{equation}\label{desinorm1}
\norm{A(t)\izt G(t,\sigma)[f(\cdot,\sigma)+g(\cdot,\sigma)]d\sigma}_{H^{(1+\al)\overline{\theta}}(\ve,\omega)}\hspace{-0.3cm} \leq c B(1-\gamma, \delta - \overline{\theta}) t^{\delta-\overline{\theta}-\gamma}\norm{t^{\gamma}(f+g)}_{L^{\infty}(0,T;{}_{0}H^{\al}(0,1))}.
\end{equation}


Furthermore by (\ref{r2}) we have

\begin{equation}\label{desinorm2}
\norm{A(t)G(t,0)v_{0}}_{H^{(1+\al)\overline{\theta}}(\ve,\omega)}
\leq\norm{A(t)G(t,0)v_{0}}_{[\ld,\dd]_{\overline{\theta}}} \leq  ct^{-1-\overline{\theta}+\theta}\norm{v_{0}}_{[\ld,\dd]_{\theta}}.
\end{equation}

At last,  \eqref{desinorm1} and \eqref{desinorm2} yields that $\norm{A(t)v(\cdot,t)}_{H^{(1+\al)\overline{\theta}}(\ve,\omega)}\leq C(t,\al,\overline{\theta},\delta,h,v_0)$, $\forall t\in(0,T]$. Since $\al>1/2$, we may choose $\overline{\theta}$ large enough so that $\beta:=(1+\al)\overline{\theta}>1/2$. Then $A(t)v\in L^{\infty}_{loc}(0,T;H^{\beta}_{loc}(0,1))$, and from (\ref{zals}) and Lemma \ref{local} \eqref{regvbeta} holds.
\end{enumerate}


\vspace{0.5cm}

{\bf Case 2: $\al\in \left(\frac{1}{4},\frac{1}{2}\right]$}
\begin{enumerate}
\item

Let $\delta=\frac{\al}{1+\al}<\frac{1}{2(1+\al)}$ in the case $\al < \frac{1}{2}$ and for $\al = \frac{1}{2}$ by $\delta$ we mean arbitrary number from the interval $(0,\frac{1}{3})$. Then 
$_{0}H^{\al}(0,1) \subset [L^{2}(0,1),\dd]_{\delta}$ (with equality when $\al < \frac{1}{2}$). From Theorem \ref{first}  $t^\gamma f\in L^\infty(0,T;\, _{0}H^{\al}(0,1))$, thus applying  (\ref{r2}), we obtain that for any $\overline{\theta} < \delta$
\begin{equation}\label{acotAt01}
\begin{array}{l}
\hspace{-0.2cm}\norm{A(t)\izt G(t,\sigma)[f(\cdot,\sigma)+g(\cdot,\sigma)]\rmd \sigma}_{[L^{2}(0,1),\dd]_{\overline{\theta}}} \hspace{-0.1cm}\leq  \izt \frac{c}{(t-\sigma)^{1+\overline{\theta}-\delta}}\norm{t^{\gamma}[f(\cdot,\sigma)+g(\cdot,\sigma)]}_{[L^{2}(0,1),\dd]_{\delta}}d\sigma\\
\leq c B(\delta-\overline{\theta}, 1-\gamma) t^{\delta-\overline{\theta}-\gamma}\norm{t^{\gamma}(f+g)}_{L^{\infty}(0,T;[L^{2}(0,1),\dd]_{\delta})}.
\end{array}
\end{equation}

Also, by (\ref{r2}) we have
\begin{equation}\label{acotAt03}
\norm{A(t)G(t,0)v_{0}}_{[L^{2}(0,1), \dd]_{\overline{\theta}}} \leq  ct^{-1-\overline{\theta}+\theta}\norm{v_{0}}_{[L^{2}(0,1), \dd]_{\theta}}.
\end{equation}
Then, taking into account \eqref{acotAt01} and \eqref{acotAt03}, we deduce that
\begin{equation}\label{acotvx01}
\norm{t^\gamma\partial^\al v_x}_{[L^{2}(0,1),\dd]_{\overline{\theta}}} \leq \widetilde{c} t^\gamma\max\{ t^{\delta-\overline{\theta}-\gamma}, t^{-1-\overline{\theta}+\theta}\}
\end{equation}

We estimate the right hand side in \eqref{acotvx01} as follows:
If $\max\{ t^{\delta-\overline{\theta}-\gamma}, t^{-1-\overline{\theta}+\theta}\}=t^{\delta-\overline{\theta}-\gamma}$, then\\
$t^\gamma\max\{ t^{\delta-\overline{\theta}-\gamma}, t^{-1-\overline{\theta}+\theta}\}=t^\gamma t^{\delta-\overline{\theta}-\gamma}=t^{\delta-\overline{\theta}},$  where $\delta-\overline{\theta}>0$. 
If, by the contrary   \linebreak
$\max\{ t^{\delta-\overline{\theta}-\gamma}, t^{-1-\overline{\theta}+\theta}\}=t^{-1-\overline{\theta}+\theta}$, then
$
t^\gamma\max\{ t^{\delta-\overline{\theta}-\gamma}, t^{-1-\overline{\theta}+\theta}\}=t^{\gamma-1-\overline{\theta}+\theta}
$ and $\gamma-1-\overline{\theta}+\theta>0$ if we take $\gamma \in \left(1-\frac{1}{2(1+\al)} +\delta,1 \right)$. Thus,
\begin{equation}\label{acotpowert3}
\norm{t^\gamma\partial^\al v_x}_{{_0}H^{(1+\al)\overline{\theta}}}=\norm{t^\gamma\partial^\al v_x}_{[L^{2}(0,1),\dd]_{\overline{\theta}}} \leq C, \quad \forall t\in [0,T].
\end{equation}
and by applying Corollary \ref{eqc} it yields that
\begin{equation}\label{acotvxforf}
\norm{t^\gamma v_x}_{{_0}H^{(1+\al)\overline{\theta}+\al}} \leq C_1 \norm{t^\gamma\partial^\al v_x}_{{_0}H^{(1+\al)\overline{\theta}}}\leq C_2, \quad \forall t\in [0,T].
\end{equation}
From \eqref{acotvxforf}, denoting $\gamma_0:=(1+\al)\overline{\theta}$ we can deduce that   $t^\gamma f \in L^\infty(0,T;{_0}H^{\gamma_0+\al} (0,1))$, for every $\gamma_0\in (0,\al)$. On the other hand, since $\al\leq\frac{1}{2}$ from (\ref{reggh}) we deduce that $g\in L^\infty(0,T;{_0}H^{\gamma_0+\al} (0,1))$, then $t^\gamma g\in L^\infty(0,T;{_0}H^{\gamma_0+\al} (0,1))$ and we conclude that
\begin{equation}
t^\gamma [f+g]\in L^\infty(0,T;{_0}H^{\gamma_0+\al} (0,1)), \quad \forall \gamma_0 \in (0,\al),\,\text{if } \gamma \in \left(1-\frac{1}{2(1+\al)}+\delta,1 \right).
\end{equation}

or equivalently,
\begin{equation}\label{regtfg}
t^\gamma[f+g]\in L^\infty(0,T;{_0}H^{\gamma_1}(0,1)),\quad \forall \gamma_1\in (\al,2\al), \,\text{ if } \gamma \in \left(1-\frac{1}{2(1+\al)}+\delta,1 \right).
\end{equation}

Now we take an arbitrary $\gamma_1<\frac{1}{2}$ (due to the fact that  $\frac{1}{4}<\al\leq \frac{1}{2}$) and  we set  $\delta_1=\frac{\gamma_1}{1+\al}<\frac{1}{2(1+\al)}$. Proceeding as in \eqref{acotAt01} and \eqref{acotAt03} we conclude that, for every $\overline{\theta}<\delta_1$,
\begin{equation}\label{acotAGfg}
\begin{split}
\norm{A(t)\izt G(t,\sigma)[f(\cdot,\sigma)+g(\cdot,\sigma)]d\sigma}_{[L^{2}(0,1),\dd]_{\overline{\theta}}} \leq\\
\leq c B( \delta_1-\overline{\theta}, 1-\gamma) t^{ \delta_1-\overline{\theta}-\gamma}\norm{t^{\gamma}(f+g)}_{L^{\infty}(0,T;[L^{2}(0,1),\dd]_{ \delta_1})},
\end{split}
\end{equation}
where we have taken a particular $\gamma$ in the last estimate such that \eqref{regtfg} holds. Besides,  we have that
\begin{equation}\label{acotAGv0}
\norm{A(t)G(t,0)v_{0}}_{[L^{2}(0,1), \dd]_{\overline{\theta}}} \leq  ct^{-1-\overline{\theta}+ \theta}\norm{v_{0}}_{[L^{2}(0,1), \dd]_{\theta}}
\end{equation}
from which, we deduce by the standard arguments, that
$$\partial^\al v_x \in L^{\infty}_{loc}(0,T;H^{(1+\al)\overline{\theta}}(0,1)), \quad \forall \overline{\theta} < \delta_1,$$
or equivalently, by denoting $\overline{\gamma}=\overline{\theta}(1+\al)$, we have that
$$\partial^\al v_x \in L^{\infty}_{loc}(0,T;H^{\overline{\gamma}}(0,1)), \quad \forall \overline{\gamma} \in (0,\gamma_1). $$
Since $\gamma_1$  is an arbitrary number in $\left(0,\frac{1}{2}\right)$, applying Corollary  \ref{eqc},
 we deduce \eqref{vxb}.

 
\item Let $\gamma$ satisfy \eqref{regtfg}. Let $\gamma_1=\frac{1}{2}-\frac{(1-\gamma)(1+\al)}{2}$, and $\delta_1=\frac{\gamma_1}{1+\al}=\frac{1}{2(1+\al)}-\frac{1-\gamma}{2}$.

Then, defining $\hat{\gamma}:=\delta_1+1-\gamma = \frac{1}{2(1+\al)} - \frac{\gamma-1}{2}>\frac{1}{2(1+\al)}$, and applying (\ref{r4}), we have that
\begin{equation}\label{acotv01}
\begin{split}
\norm{v}_{[L^{2}(0,1),\dd]_{\hat{\gamma}}} \leq \norm{G(t,0)v_0}_{[L^{2}(0,1),\dd]_{\hat{\gamma}}}+
\izt\norm{G(t,\sigma)[f(\cdot,\sigma)+g(\cdot,\sigma)]}_{[L^{2}(0,1),\dd]_{\hat{\gamma}}}d\sigma\\
\leq c_1\norm{v_0}_{[L^{2}(0,1),\dd]_{\hat{\gamma}}}+c_2 B(1-\gamma,1+\delta_1-\hat{\gamma})t^{1 -\hat{\gamma}+\delta_1-\gamma }\norm{t^\gamma[f+g]}_{L^\infty(0,T;[L^{2}(0,1),\dd]_{\delta_1})}.
\end{split}
\end{equation}

By definition of $\hat{\gamma}$, we have that $1 -\hat{\gamma}+\delta_1-\gamma = 0$. Then, \eqref{reggammahat} holds.

\item Let us denote $\delta_{1} = \frac{\gamma_{1}}{1+\al}$ where $\gamma_1 \in \left(\frac{1}{2}, 2\al\right)$ is a constant from \eqref{regtfg}, and let $\overline{\theta}<\delta_1$. Then
\begin{equation}
\begin{split}
\norm{A(t)v(\cdot,t)}_{H^{(1+\al)\overline{\theta}}(\ve,\omega)} &\leq \norm{A(t)G(t,0)v_0}_{H^{(1+\al)\overline{\theta}}(\ve,\omega)}\hspace{-0.1cm}+\hspace{-0.1cm}\int_0^t\hspace{-0.3cm} \norm{A(t)G(t,\sigma)(f+g)(\cdot,\sigma)}_{H^{(1+\al)\overline{\theta}}(\ve,\omega)} d\sigma
\end{split}
\end{equation}

and applying (\ref{r2})
\begin{equation}
\norm{A(t)G(t,0)v_0}_{H^{(1+\al)\overline{\theta}}(\ve,\omega)} \leq \norm{A(t)G(t,0)v_0}_{[L^2(0,1),\dd]_{\bar{\theta}}}\leq c t^{-1-\bar{\theta}+\theta}\norm{v_0}_{[L^2(0,1),\dd]_{\theta}}
\end{equation}

Now we consider $\gamma$ such that \eqref{regtfg} holds and we apply Lemma \ref{lemusi2} with $\beta=\gamma_1$, $\beta_1=\bar{\theta}(1+\al)$, $\bar{\gamma}=\al$ asking $\bar{\theta}$ to verify that $\bar{\theta}(1+\al)>\al$.
\begin{equation}\label{acotHloc02}
\begin{split}
\int_0^t \norm{A(t)G(t,\sigma)(f+g)(\cdot,\sigma)}_{H^{(1+\al)\overline{\theta}}(\ve,\omega)} d\sigma \hspace{7cm}\\
\leq c_2 \int_0^t  (t-\sigma)^{-1+\delta_1-\overline{\theta}}\sigma^{-\gamma}\left( \norm{\sigma^{\gamma}(f+g)(\cdot,\sigma)}_{H^{(1+\al)\delta_1}\left(\frac{\ve}{2},\frac{1+\omega}{2}\right)} + \norm{\sigma^{\gamma}(f+g)(\cdot,\sigma)}_{H^{\al}(0,1)}\right)d\sigma\\
\leq c_2 \int_0^t  (t-\sigma)^{-1+\delta_1-\overline{\theta}}\sigma^{-\gamma}d\sigma\left( \norm{t^{\gamma}(f+g)}_{L^\infty(0,T;H^{(1+\al)\delta_1}(0,1))} + \norm{t^{\gamma}(f+g)}_{L^\infty(0,T;H^{\al}(0,1))}\right).\end{split}
\end{equation}

Then, taking into account that $\overline{\theta}<\delta_1$, we have
\begin{equation}
\int_0^t \norm{A(t)G(t,\sigma)(f+g)(\cdot,\sigma)}_{H^{(1+\al)\overline{\theta}}(\ve,\omega)} d\sigma 
\leq c_3 B(\delta_1-\overline{\theta},1-\gamma)t^{\delta_1-\overline{\theta}-\gamma}
\end{equation}
and hence
\[
A(t)v \in L^{\infty}_{loc}(0,T;H^{(1+\al)\overline{\theta}}_{loc}(0,1))
 \m{ for every } \frac{\al}{1+\al}< \overline{\theta} < \delta_{1}< \frac{2\al}{1+\al}.
\]
Thus,
\[
\p^{\al}v_{x} \in L^{\infty}_{loc}(0,T;H^{(1+\al)\overline{\theta}}_{loc}(0,1)), \hd
 \m{ for every $\bar{\theta}$ such that }  \al< \overline{\theta}(1+\al) < \gamma_{1},
\]
where $\gamma_1$ was taken greater than $\frac{1}{2}$. Then there exists $\beta>\frac{1}{2}$ such that $\p^{\al}v_{x} \in L^{\infty}_{loc}(0,T;H^{\beta}_{loc}(0,1))$. 

Applying Lemma \ref{local} we get that
\[
v_{x} \in L^{\infty}_{loc}(0,T;H^{\beta+\al}_{loc}(0,1)).
\]
and
\[
v \in L^{\infty}_{loc}(0,T;H^{\beta+\al+1}_{loc}(0,1)).
\]

\end{enumerate}


{\bf Case 3: $\al\in \left(\frac{1}{2(k+1)},\frac{1}{2k}\right]$, with $k\in \bbN$, $k\geq 2$.} 

Let us start with the regularity of the term $t^\gamma f$. We first note that \eqref{regtfg} is valid for every $\al \in \left(0,\frac{1}{2}\right)$. Next we consider  $\delta_1=\frac{\gamma_1}{1+\al}$ where $\gamma_1$ is a constant in \eqref{regtfg}. Then the estimations \eqref{acotAt01} and \eqref{acotAt03} are valid for every $\bar{\theta}<\delta_1$, where in this case $\bar{\theta}<\frac{\gamma_1}{1+\al}<\frac{2\al}{1+\al}$. After that, we note that \eqref{acotpowert3} holds if $\gamma-1+\theta-\bar{\theta}>0$, but since $\theta>\frac{1}{2(1+\al)}$ this condition is verified for every $\bar{\theta}< \delta_{1}$ if $\gamma \in \left(1-\frac{1}{2(1+\al)}+\delta_{1},1\right).
$

Then, using \eqref{reggh}, we can conclude that 
\begin{equation}
t^\gamma [f+g]\in L^\infty(0,T;{_0}H^{(1+\al)\bar{\theta}+\al} (0,1)), \quad \forall \bar{\theta}<\delta_1, \, \text{if } \gamma \in \left(1-\frac{1}{2(1+\al)}+\delta_{1},1 \right).
\end{equation}

Which leads to
\begin{equation}\label{regtfg-2}
t^\gamma[f+g]\in L^\infty(0,T;{_0}H^{\gamma_2}(0,1)),\quad \forall \gamma_2\in (0,3\al), \,\text{if  } \gamma \in \left(1-\frac{1}{2(1+\al)}+\delta_{1},1 \right).
\end{equation}

Repeating the previous reasoning $k$ times, we conclude that, for $\al \in \left(\frac{1}{2(k+1)},\frac{1}{2k}\right]$ with $\delta_{k} :=\frac{\gamma_{k}}{1+\al}$ we have 
\begin{equation}\label{regtfg-k}
t^\gamma[f+g]\in L^\infty(0,T;{_0}H^{\gamma_k}(0,1)),\quad \forall \gamma_k\in (0,(k+1)\al), \,\text{if  } \gamma \in \left(1-\frac{1}{2(1+\al)}+\delta_{k-1},1 \right).
\end{equation}

\begin{enumerate}
\item  Consider $\gamma_k < \frac{1}{2}$ arbitrary (which is possible due to the fact that  $\frac{1}{2(k+1)}<\al< \frac{1}{2k}$) and denote  $\delta_k=\frac{\gamma_k}{1+\al}$. Then proceeding as in item 1 of the former case we can say that  \eqref{acotAGfg} with $\delta_{k}$ instead of $\delta_{1}$, holds for every $\bar{\theta}<\delta_k$, if we take an auxiliary $\gamma$  verifying \eqref{regtfg-k}. Also we have \eqref{acotAGv0} and  then we can conclude  that
$$\partial^\al v_x \in L^{\infty}_{loc}(0,T;H^{(1+\al)\overline{\theta}}(0,1)), \quad \forall \overline{\theta} < \delta_k,$$
or equivalently, by denoting $\overline{\gamma}=\overline{\theta}(1+\al)$, we have that
$$\partial^\al v_x \in L^{\infty}_{loc}(0,T;H^{\overline{\gamma}}(0,1)), \quad \forall \overline{\gamma} \in (0,\gamma_k), \quad \forall \gamma_k<\frac{1}{2},$$
that is,
$$\partial^\al v_x \in L^{\infty}_{loc}(0,T;H^{\overline{\gamma}}(0,1)), \quad \forall \overline{\gamma} \in \left(0,\frac{1}{2}\right).$$

Then, by Corollary \ref{eqc} and \eqref{p60} we get that, for $\al\in\left(\frac{1}{2(k+1)},\frac{1}{2k}\right]$,
$$v_x \in L^{\infty}_{loc}(0,T;H^{\overline{\gamma}+\al}(0,1)), \quad \forall \overline{\gamma} \in \left(0,\frac{1}{2}\right).$$

\item It is analogous to the case $\al \in \left(\frac{1}{4},\frac{1}{2}\right]$ by considering $\gamma_k$ and $\gamma$ verifying \eqref{regtfg-k} and then choosing $\hat{\gamma}:=1-\gamma+\delta_k$.
\item It is analogous to the case $\al \in \left(\frac{1}{4},\frac{1}{2}\right]$ considering $\gamma_k$ and  $\gamma$ verifying \eqref{regtfg-k}, applying Lemma~\ref{lemusi2} and obtaining \eqref{acotHloc02}, which holds for every $\frac{\al}{\al+1}<\bar{\theta}<\delta_k$.
\end{enumerate}

\no Finally, we prove item 4, for every $\al \in (0,1)$.\\

 $4.$ Theorem \ref{first} states that $v \in C((0,T];\dd)$. Since for arbitrary $0<\ve<\omega<1$ and for every $0<\overline{\beta} < \beta$ there holds
\[
H^{\overline{\beta}+\al+1}(\ve,\omega) = [H^{1+\al}(\ve,\omega),H^{\beta+\al+1}(\ve,\omega)]_{\frac{\overline{\beta}}{\beta}},
\]
we may estimate by the interpolation theorem (\cite[Corollary 1.2.7]{Lunardi})
\[
\norm{v(\cdot,t)-v(\cdot,\tau)}_{H^{\overline{\beta}+\al+1}(\ve,\omega)}
 \leq c\norm{v(\cdot,t)-v(\cdot,\tau)}_{H^{1+\al}(0,1)}^{1-\frac{\overline{\beta}}{\beta}}
\norm{v(\cdot,t)-v(\cdot,\tau)}_{H^{\beta+\al+1}(\ve,\omega)}^{\frac{\overline{\beta}}{\beta}},
\]
where $c =  c(\beta,\overline{\beta},\ve)$.
By Lemma \ref{third}-2, the second norm on the right hand side above is bounded on every compact interval contained in $(0,T]$, while the first tends to zero as $\tau\rightarrow t$ for $t,\tau \in (0,T]$.
Then, we may choose $\bar{\beta} > \frac{1}{2}$ such that $v \in C((0,T];H^{\bar{\beta}+1+\al}_{loc}(0,1))$.

\no In order to obtain the claim for $\p^{\al}v_{x}$ we recall that by Theorem \ref{first} we have $\p^{\al}v_{x} \in C((0,T];L^{2}(0,1))$.
Applying again the interpolation theorem we obtain for every $0<\ve<\omega<1$, $0<\tau < t \leq T$ and every $0<\overline{\beta} < \beta$
\[
\norm{\p^{\al}v_{x}(\cdot,t) - \p^{\al}v_{x}(\cdot,\tau)}_{H^{\overline{\beta}}(\ve,\omega)} \hspace{-0.1cm}
\leq c(\beta,\overline{\beta},\al)\norm{\p^{\al}v_{x}(\cdot,t) - \p^{\al}v_{x}(\cdot,\tau)}_{L^{2}(0,1)}^{1-\frac{\overline{\beta}}{\beta}}\norm{\p^{\al}v_{x}(\cdot,t) - \p^{\al}v_{x}(\cdot,\tau)}_{H^{\beta}(\ve,\omega)}^{\frac{\overline{\beta}}{\beta}}.
\]
The first norm tends to zero as $\tau \rightarrow t$, while the second one is bounded on every compact interval contained in $(0,T]$ due to Lemma \ref{third}-3.

Then, we may choose $\bar{\beta} > \frac{1}{2}$ such that $\p^{\al}v_{x} \in C((0,T];H^{\bar{\beta}}_{loc}(0,1))$.

\end{proof}

Finally, we can establish the continuity of the mild solution.

\begin{coro}\label{regost}
 Let us assume that $h \in W^{1,\infty}(0,T)$, $v_{0} \in {}^{0}H^{\theta(\al+1)}(0,1)$, for fixed $\theta \in (\frac{1}{2(1+\al)},1]$.
 Then,  the solution to (\ref{niecv}) obtained in Theorem~\ref{first} satisfies that
  \eqq{
  v \in C([0,T]\times [0,1]).
  }{vciaglosc}
	 \end{coro}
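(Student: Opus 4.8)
The plan is to upgrade the continuity of $v$ in two stages: first on $(0,T]$, where it follows from the regularity already established in Theorem \ref{first} together with a Sobolev embedding, and then up to $t=0$, where it will be obtained by interpolating the uniform bound of Lemma \ref{third}-2 against the $\ld$-continuity built into the space $Y_{\gamma}(0,T)$. Throughout I shall use the elementary fact that a map $[0,T]\to C[0,1]$ is continuous if and only if the associated function on $[0,T]\times[0,1]$ is jointly continuous, which holds since $[0,1]$ is compact.

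First I would note that $t^{\gamma}v\in C([0,T];\dd)$ while $t\mapsto t^{-\gamma}$ is continuous and bounded on every compact subinterval of $(0,T]$, so $v\in C((0,T];\dd)$; since, by Theorem \ref{analitycznosc}, $\dd\hookrightarrow H^{1+\al}(0,1)\hookrightarrow C[0,1]$, this gives $v\in C((0,T];C[0,1])$. Because $v(\cdot,0)=v_{0}$ (evaluate \eqref{wzorv} at $t=0$, where the two integrals vanish and $G(0,0)$ is the identity) and $v_{0}\in C[0,1]$, it then suffices to show that $v(\cdot,t)\to v_{0}$ in $C[0,1]$ as $t\to 0^{+}$.

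For this I would invoke Lemma \ref{third}-2, which provides some $\hat{\gamma}>\frac{1}{2(1+\al)}$ with $v\in L^{\infty}(0,T;[\ld,\dd]_{\hat{\gamma}})$; replacing $\hat{\gamma}$ by $\min\{\hat{\gamma},\theta\}$ if necessary (which is still $>\frac{1}{2(1+\al)}$ and for which the same bound holds, since $[\ld,\dd]_{\hat{\gamma}}\hookrightarrow[\ld,\dd]_{\min\{\hat{\gamma},\theta\}}$) we may assume $\hat{\gamma}\le\theta$. Then, by \eqref{p60}, $v_{0}\in[\ld,\dd]_{\theta}\hookrightarrow[\ld,\dd]_{\hat{\gamma}}$. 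Fix $\tilde{\gamma}\in\left(\frac{1}{2(1+\al)},\hat{\gamma}\right)$ and put $\lambda:=\tilde{\gamma}/\hat{\gamma}\in(0,1)$; by the reiteration theorem for complex interpolation, $[\ld,\dd]_{\tilde{\gamma}}=[\ld,[\ld,\dd]_{\hat{\gamma}}]_{\lambda}$, whence for every $t\in(0,T]$
\[
\norm{v(\cdot,t)-v_{0}}_{[\ld,\dd]_{\tilde{\gamma}}}\le c\,\norm{v(\cdot,t)-v_{0}}_{\ld}^{1-\lambda}\left(\norm{v(\cdot,t)}_{[\ld,\dd]_{\hat{\gamma}}}+\norm{v_{0}}_{[\ld,\dd]_{\hat{\gamma}}}\right)^{\lambda}.
\]
Here the first factor tends to $0$ as $t\to0^{+}$ because $v\in C([0,T];\ld)$ with $v(\cdot,0)=v_{0}$, while the bracket is bounded by $\norm{v}_{L^{\infty}(0,T;[\ld,\dd]_{\hat{\gamma}})}+\norm{v_{0}}_{[\ld,\dd]_{\hat{\gamma}}}<\infty$; hence $v(\cdot,t)\to v_{0}$ in $[\ld,\dd]_{\tilde{\gamma}}$. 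Since $\tilde{\gamma}(1+\al)>\frac12$, \eqref{p60} gives $[\ld,\dd]_{\tilde{\gamma}}\hookrightarrow H^{\tilde{\gamma}(1+\al)}(0,1)\hookrightarrow C[0,1]$, so indeed $v(\cdot,t)\to v_{0}$ in $C[0,1]$; combining this with the first step yields $v\in C([0,T];C[0,1])$, i.e. \eqref{vciaglosc}.

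I expect the limit $t\to0^{+}$ to be the only genuine obstacle: there the solution is merely bounded — not a priori continuous — with values in $[\ld,\dd]_{\hat{\gamma}}$, so $C[0,1]$-continuity cannot be read off directly. The displayed interpolation inequality is exactly what converts the $\ld$-continuity at $t=0$ into $C[0,1]$-continuity, and it works precisely because the exponent $\hat{\gamma}$ supplied by Lemma \ref{third}-2 lies strictly above the Sobolev threshold $\frac{1}{2(1+\al)}$, leaving room for $\tilde{\gamma}$ strictly between the two.
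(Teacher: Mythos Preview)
Your proof is correct and follows essentially the same approach as the paper: interpolate the $L^{\infty}(0,T;[\ld,\dd]_{\hat{\gamma}})$ bound from Lemma~\ref{third}-2 against the $C([0,T];\ld)$ continuity built into $Y_{\gamma}(0,T)$ to land in $C([0,T];[\ld,\dd]_{\tilde{\gamma}})$ for some $\tilde{\gamma}>\tfrac{1}{2(1+\al)}$, then apply the Sobolev embedding. The paper compresses this into two sentences and does not separate the cases $t>0$ and $t=0$; your splitting is harmless but unnecessary, since the interpolation inequality you display already yields continuity on the whole closed interval in one stroke.
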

\begin{proof}

The continuity of $v$ follows from the fact that by Lemma \ref{third} there holds \linebreak $v \in L^{\infty}((0,T);[\ld,\dd]_{\bar{\gamma}})$ for $\bar{\gamma}>\frac{1}{2(1+\al)}$. Since $v \in C([0,T];\ld)$ by the  interpolation argument we obtain that $v \in C([0,T];[\ld,\dd]_{\bar{\gamma}_{1}})$ for a $\frac{1}{2(1+\al)}< \bar{\gamma}_{1} < \bar{\gamma}$ and hence $v \in C([0,T]\times [0,1])$ by the Sobolev embedding.
\end{proof}

Finally, in the next Corollary  we find the regularity needed to work with extremum principles.

\begin{coro}\label{w2b}
Under the assumptions of Lemma \ref{third} for every $\al \in (0,1)$ there exists $\beta > \frac{1}{2}$ such that for every $0<\ve<\omega<1$ there holds $v_{x}~\in C((0,T];H^{\al+\beta}(\ve,\omega))$.
\end{coro}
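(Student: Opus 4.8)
The plan is to obtain this Corollary as a one-line consequence of Lemma \ref{third}, item~4, by differentiating once in space. First I would fix $\al \in (0,1)$ and let $\beta > \tfrac12$ be the exponent furnished by Lemma \ref{third}-4, so that $v \in C((0,T];H^{\beta+1+\al}_{loc}(0,1))$; in particular, for an arbitrary but fixed pair $0<\ve<\omega<1$ we have $v \in C((0,T];H^{\beta+1+\al}(\ve,\omega))$.

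Next I would differentiate in $x$. Since $\beta+1+\al > \tfrac32$, this Sobolev order is in particular different from the critical value $\tfrac12$, so the Lions--Magenes mapping property \eqref{Lions}, applied on the interval $(\ve,\omega)$ in place of $(0,1)$, yields that $\p_x \colon H^{\beta+1+\al}(\ve,\omega) \to H^{\beta+\al}(\ve,\omega)$ is a bounded linear operator. A bounded linear operator sends a $C((0,T];X)$-valued curve to a $C((0,T];Y)$-valued curve, hence $v_{x} = \p_x v \in C((0,T];H^{\beta+\al}(\ve,\omega))$. Since $\ve,\omega$ were arbitrary and $\al+\beta=\beta+\al$, this is precisely the assertion, with the same $\beta>\tfrac12$.

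There is essentially no obstacle here — the statement is a formal bookkeeping corollary of Lemma \ref{third}. If one wished to avoid invoking \eqref{Lions}, an alternative route is to combine the two conclusions of Lemma \ref{third}-4, namely $\p^{\al}v_{x}\in C((0,T];H^{\beta}_{loc}(0,1))$ together with $v_{x}(\cdot,t)\in {}_{0}H^{\al}(0,1)$ for each $t$ (valid since $v(\cdot,t)\in\dd$), and to feed the differences $v_{x}(\cdot,t)-v_{x}(\cdot,\tau)$ into the local estimate \eqref{nowelc} of Lemma \ref{local}: the ${}_{0}H^{\al}(0,\omega)$-norm of the difference is controlled, via Proposition \ref{eq}, by $\norm{\p^{\al}v_{x}(\cdot,t)-\p^{\al}v_{x}(\cdot,\tau)}_{\ld}\to 0$, while the $H^{\beta}(\tfrac{\ve}{2},\omega)$-norm of $\p^{\al}$ of the difference tends to zero by Lemma \ref{third}-4; this gives continuity of $t\mapsto v_{x}(\cdot,t)$ in $H^{\al+\beta}(\ve,\omega)$ as well. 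The first route is shorter, and in either case the only point deserving a word is that the Sobolev order stays away from $\tfrac12$, which is automatic since $\beta+1+\al>\tfrac32$.
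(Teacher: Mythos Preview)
Your proposal is correct and matches the paper's treatment: the paper states Corollary~\ref{w2b} without proof, as an immediate consequence of Lemma~\ref{third}-4, and your first route via the Lions--Magenes mapping property~\eqref{Lions} (differentiating the $C((0,T];H^{\beta+1+\al}_{loc})$ regularity once in space) is precisely the intended one-line argument.
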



\subsection{The existence and regularity of solutions to the MBP  (\ref{niech})}
\no Finally, we are ready to formulate and prove the result concerning the existence, uniqueness and regularity of the solution to (\ref{niech}).
\begin{theo}\label{existance}
Let $b,T > 0$ and $\al \in (0,1)$. Let us assume that $s$ satisfies (\ref{zals}). We further assume that $u_{0}(b) = 0$, $u_{0} \in H^{\theta(1+\al)}(0,b)$ for fixed $\theta \in (\frac{1}{2(\al+1)},1)$. Then, there exists a unique solution $u$ to~(\ref{niech}) such that $u \in C(\overline{Q_{s,T}})$, $u_{t},\poch \da u \in C(Q_{s,T})$ and for every $t \in (0,T]$ $u(\cdot,t) \in AC[0,s(t)], \poch \da u(\cdot,t), u_{t}(\cdot,t) \in L^{2}(0,s(t))$ . Furthermore, there exists $\beta >\frac{1}{2}$ such that for every $t \in (0,T]$ and every $0<\ve<\omega<s(t)$ we have $u_{x}(\cdot,t) \in H^{\al+\beta}(\ve,\omega)$.
\end{theo}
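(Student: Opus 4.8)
The plan is to transfer all the work done for the $\varphi$-auxiliary problem \eqref{niecv} back to the original moving-boundary problem \eqref{niech} via the change of variables \eqref{sust}-\eqref{defv}, and then to build the full solution as $u = u_{reg} + u_{sin}$ as in \eqref{u=reg+sing}. First I would set $w_0(x) = u_0(xb)$ and $v_0 = w_0 - h(0)b^\al\varphi$ as in \eqref{defv0}; the assumption $u_0 \in H^{\theta(1+\al)}(0,b)$ with $u_0(b)=0$ together with the explicit behaviour of $\varphi$ near $0$ (it equals $x^\al/\Gamma(1+\al)$ there) should give $v_0 \in {}^0H^{\theta(1+\al)}(0,1)$ — here the point $u_0(b)=0$ is what lets us land in the space with vanishing trace at $x=1$, and the subtraction of the singular part is what cancels the $x^\al$ behaviour at $x=0$ so that $v_{0,x} \in {}_0H^{\al}$ locally. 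Then Theorem \ref{first} produces a unique mild solution $v \in Y_\gamma(0,T)$ to \eqref{niecv}, Lemma \ref{second} upgrades it to a strong solution of $(\ref{niecv}-i)$ in $L^2$ for a.e. $t$, Corollary \ref{regost} gives $v \in C([0,T]\times[0,1])$, and Lemma \ref{third} together with Corollary \ref{w2b} gives the local higher regularity $v_x(\cdot,t) \in H^{\al+\beta}(\ve,\omega)$ and the continuity statements.

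Next I would unwind the substitution. Define $u_{reg}(x,t) = v(x/s(t),t)$ and $u_{sin}(x,t) = h(t)s^\al(t)\varphi(x/s(t))$, and $u = u_{reg}+u_{sin}$; by the computation preceding \eqref{u=reg+sing}, $w = v + h s^\al \varphi$ solves \eqref{niecw}, hence $u(x,t) = w(x/s(t),t)$ solves \eqref{niech}. The boundary condition $(\ref{niech}-ii)$ needs a small check: since $\da$ of the regular part vanishes at $0$ (because $v_x(\cdot,t)\in {}_0H^\al$ so $v(\cdot,t)$ is smooth enough near $0$ in the relevant sense, or directly because $D^\al$ of an $H^{1+\al}$-type function vanishes at $0$ by Proposition \ref{propo frac} item \ref{Proposition 3.1}), while $\da$ of the singular part reproduces exactly $-s^\al(t)h(t)$ at $0$ in the $w$-variable because $\varphi(x) = x^\al/\Gamma(1+\al)$ near $0$ and $D_x^\al x^\al = \Gamma(1+\al)$; rescaling back gives $-\da u(0^+,t)=h(t)$. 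The conditions $(\ref{niech}-iii)$ ($w(1,t)=0$) and $(\ref{niech}-iv)$ follow directly from $v(1,t)=0$ (which holds since $v(\cdot,t)\in\dd$) and $v(x,0)=v_0(x)$, $\varphi(x/b)b^\al h(0) = u_{sin}(x,0)$. The regularity claims in the statement — $u \in C(\overline{Q_{s,T}})$, $u_t, \poch\da u \in C(Q_{s,T})$, $u(\cdot,t)\in AC[0,s(t)]$, $u_t(\cdot,t),\poch\da u(\cdot,t)\in L^2(0,s(t))$, and $u_x(\cdot,t)\in H^{\al+\beta}(\ve,\omega)$ — all transfer from the corresponding properties of $v$ and $\varphi$ through the bi-Lipschitz (in fact smooth, since $s\in C^{0,1}$ with $s\ge b>0$) change of variables; one must be slightly careful that $t\mapsto s(t)$ is only Lipschitz, so $u_t$ involves $\dot s(t)$ which exists a.e., but this is already reflected in Definition \ref{sol-b>0} and matches the $L^\infty_{loc}$ time-regularity from Lemma \ref{second}.

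Finally, uniqueness: if $u_1,u_2$ are two solutions to \eqref{niech} with the stated regularity, then $w_i(x,t) = u_i(xs(t),t)$ both solve \eqref{niecw}, so $v_i = w_i - h s^\al\varphi$ both solve \eqref{niecv} with the same data; one checks each $v_i$ is a mild solution in some $Y_\gamma$ (using the regularity to justify the Duhamel representation \eqref{wzorv}, essentially reversing Lemma \ref{second}) and then Theorem \ref{first}'s uniqueness forces $v_1=v_2$, hence $u_1=u_2$. Alternatively, one can invoke the weak extremum principle / comparison principle (Proposition \ref{comparacion}) directly on $u_1-u_2$ and $u_2-u_1$, since both have the regularity required by Theorem \ref{weakPrinc} — this is cleaner and avoids re-deriving the Duhamel formula. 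I expect the main obstacle to be the bookkeeping at the left endpoint: verifying rigorously that $v_0 \in {}^0H^{\theta(1+\al)}(0,1)$ (i.e. that subtracting $h(0)b^\al\varphi$ really removes enough of the $x^\al$ singularity of $u_0(xb)$ so that the remainder has a derivative in ${}_0H^\al$ locally, and that the trace at $x=1$ is zero), and then tracking which of the local regularity statements in Lemma \ref{third} survive the rescaling uniformly on compact time-subintervals of $(0,T]$ but possibly blow up as $t\to 0^+$ and near $x=s(t)$ — exactly the reason the higher regularity is only claimed locally in space and away from $t=0$. The behaviour near $x=0$, where $u$ genuinely fails to be $C^1$ and only behaves like $h(t)x^\al/\Gamma(1+\al)$, is handled entirely by the $u_{sin}$ term and does not enter the $H^{\al+\beta}(\ve,\omega)$ estimate since $\ve>0$.
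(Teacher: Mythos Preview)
Your overall strategy is the same as the paper's: feed $v_0$ into Theorem~\ref{first}, Lemma~\ref{second}, Lemma~\ref{third} and Corollary~\ref{regost}/\ref{w2b} to produce $v$, set $u(x,t)=v(x/s(t),t)+h(t)s^\al(t)\varphi(x/s(t))$, and read off all the claimed properties by change of variables; for the boundary condition at $x=0$ the paper, like you, uses that $\da v(\cdot,t)=I^{1-\al}v_x(\cdot,t)\in{}_0H^1(0,1)$ vanishes at $0$ while $\da\varphi(0)=1$.

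One point where your reasoning is slightly off is the discussion of $v_0$. You write that the subtraction of $h(0)b^\al\varphi$ ``cancels the $x^\al$ behaviour at $x=0$'' of $u_0$ and yields $v_{0,x}\in{}_0H^\al$ locally. Neither is needed, and neither is true in general: $u_0$ is an arbitrary element of $H^{\theta(1+\al)}(0,b)$ and need not behave like $x^\al$ at the origin, so subtracting $\varphi$ typically \emph{lowers} the regularity of the initial datum rather than raising it (since $\varphi\in H^\beta(0,1)$ only for $\beta<\al+\tfrac12$). The correct observation is simply that $v_0\in{}^0H^{\theta'(1+\al)}(0,1)$ for any $\theta'\in\big(\tfrac{1}{2(1+\al)},\min(\theta,\tfrac{2\al+1}{2(1+\al)})\big)$, which is all that Lemma~\ref{third} requires; the trace at $x=1$ comes from $u_0(b)=0$ and $\varphi(1)=0$. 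The decomposition $u=u_{reg}+u_{sin}$ is designed to absorb the boundary condition at $x=0$ for positive times, not to regularize the initial datum.

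For uniqueness the paper invokes ``a standard energy estimate'' (multiply the equation for the difference by the difference and use the coercivity from Lemma~\ref{estfirst-Lemma}); your alternative via Proposition~\ref{comparacion} applied to $u_1-u_2$ and $u_2-u_1$ is equally valid and indeed cleaner.
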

\begin{proof}
Firstly we will establish the results concerning the existence and regularity of solution to (\ref{niecv}) and then, we will rewrite the results in terms of properties of solution to (\ref{niech}).
We note that, under assumptions concerning regularity and traces of $\uz$ we obtain that $v_{0}$  defined in (\ref{defv0}) satisfies the assumption of Theorem~\ref{first}, Lemma~\ref{third} and Corollary~\ref{regost}.
Hence, there exists $v$ a unique solution to (\ref{niecv}) with the regularity given by Theorem~\ref{first}, Lemma~\ref{second}, Lemma~\ref{third} and Corollary~\ref{regost}. In particular,  $v \in C([0,T]\times [0,1])$. Furthermore, from Corollary~\ref{w2b} we know that there exists $\beta > \frac{1}{2}$ such that $v_{x} \in C((0,T];H^{\al+\beta}(\ve,\omega))$ for every $0<\ve<\omega<1$.

\no According to \eqref{u=reg+sing} we define the function $u$ on $Q_{s,T}$ by the formula $u(x,t) = v(\frac{x}{s(t)},t) + h(t)s^{\al}(t)\varphi(\frac{x}{s(t)})$. Since $v \in C([0,T]\times [0,1])$, we obtain that $u \in C(\overline{Q_{s,T}})$ and $v_{x} \in C((0,T];H^{\al+\beta}(\ve,\omega))$  implies $u_{x}(\cdot,t) \in H^{\al+\beta}(\ve,\omega)$ for every $t \in (0,T]$ and every $0<\ve<\omega<s(t)$. Recalling that $v \in C((0,T];\dd)$ and $\varphi \in AC[0,1]$ we obtain that $u(\cdot,t) \in AC[0,s(t)]$ for every $t \in (0,T]$.
Furthermore,  since $v$ is a unique solution to (\ref{niecv}) we obtain that $u$ satisfies (\ref{niech}). Indeed, a simple calculation shows that $u$ satisfies $(\ref{niech})_{1}$ and
\[
u_{t}(x,t) = \poch \da u(x,t) = \frac{1}{s^{1+\al}(t)} \frac{\p}{\p p} \da v(p,t) + \frac{h(t)}{s(t)}\frac{\p}{\p p} \da \varphi(p)  \m{ where } p=\frac{x}{s(t)}.
\]
From $v \in C((0,T];\dd)$ and (\ref{auxvar}) we obtain $\poch \da u(\cdot,t), u_{t}(\cdot,t) \in L^{2}(0,s(t))$ for every $t > 0$.
Applying Corollary \ref{regost} and the Sobolev embedding we may deduce that $\p^{\al}v_{p} =\frac{\p}{\p p} \da v  \in C((0,T] \times (0,1))$, which implies $\poch \da u,\hd u_{t} \in C(Q_{s,T})$.
Finally,
\[
\da u(x,t) = \frac{1}{s^{\al}(t)}\da v(p,t) + h(t)\da \varphi(p).
\]
By definition of $\vf$ we have $\da \varphi(0) = 1$ and since $v \in C((0,T];\dd)$ we arrive at
\[
\da v(p,t) = I^{1-\al}v_{p}(p,t) \in {}_{0}H^{1}(0,1) \m{ for every } t > 0,
\]
which implies $\da u(0,t) = h(t)$ for every $t > 0$. A uniqueness follows from a standard energy estimate.
\end{proof}

\subsection{A solution to the Stefan problem}
In this section, we prove existence of solutions to the associated free-boundary problem for the $b>0$ case, which is to find the pair of functions $(u,s)$ satisfying the system

\begin{equation}\label{Stefan}
 \left\{ \begin{array}{ll}
u_{t} - \poch \da u = 0 & \textrm{ in } \{(x,t):0<x<s(t), 0<t<T\}=: Q_{s,T}, \\
\da u(0,t) =h(t), \ \ u(t,s(t)) = 0 & \textrm{ for  } t \in (0,T), \\
u(x,0) = u_{0}(x) & \textrm{ for } 0<x<s(0)=b, \\
\dot{s}(t) = -(\da u)(s(t),t) & \textrm{ for  } t \in (0,T).
\end{array} \right. \end{equation}

\begin{theo}\label{finala}
Let $b, T > 0$ and $\al \in (0,1)$. Let us assume that, $h$ satisfies (\ref{zalh}),  $\uz$ satisfies the assumptions of Theorem \ref{existance} and Proposition \ref{daboundlem}. 
Then, there exists exactly one  solution $(u,s)$  to (\ref{Stefan}) in the sense of Definition \ref{sol-b>0}, satisfying: there exists $\beta > \frac{1}{2}$, such that for every $t \in (0,T]$ and every $0<\ve<\omega<s(t)$ there holds $u_{x}(\cdot,t)\in H^{\al+\beta}(\ve,\omega)$. Finally, if $\uz \not\equiv 0$ or $h \not\equiv 0$ then $0 < \dot{s}$ a.e. on $[0,T]$.
\end{theo}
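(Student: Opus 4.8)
The plan is the scheme announced in the introduction: freeze the free boundary, solve the resulting moving boundary problem via Theorem~\ref{existance}, encode the Stefan condition as the integral identity (\ref{CondInt1N}) of Theorem~\ref{condicionintegralNC}, and close the argument with the Schauder fixed point theorem. Set $M:=\|h\|_{\infty}$ and let
\[
K:=\left\{s\in C^{0,1}([0,T]):\ s(0)=b,\ 0\le\dot s(t)\le M \m{ for a.e. } t\in(0,T)\right\}.
\]
Then $K$ is nonempty (it contains $s\equiv b$), convex, and --- its members being $M$-Lipschitz with $b\le s(t)\le b+MT$ --- closed and compact in $C([0,T])$ by Arzel\`a--Ascoli. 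For $s\in K$, the datum $\uz$ satisfies by assumption the hypotheses of Theorem~\ref{existance} and of Proposition~\ref{daboundlem}, so there is a unique solution $u_{s}$ of the moving boundary problem (\ref{niech}) with the regularity of Theorem~\ref{existance}; in particular $u_{s}\in C(\overline{Q_{s,T}})$, $u_{s}(\cdot,t)\in AC[0,s(t)]$, $\partial_{t}u_{s}(\cdot,t)\in L^{2}(0,s(t))$, and $(u_{s})_{x}(\cdot,t)\in H^{\al+\beta}(\ve,\omega)$ for some $\beta>\frac{1}{2}$ and all $0<\ve<\omega<s(t)$. Define
\[
(\mathcal{T}s)(t):=b+\intt h(\tau)\,\rmd\tau+\int_{0}^{b}\uz(x)\,\rmd x-\int_{0}^{s(t)}u_{s}(x,t)\,\rmd x.
\]

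First I would check that $\mathcal{T}$ maps $K$ into itself. Clearly $(\mathcal{T}s)(0)=b$. Repeating the computation from the proof of Theorem~\ref{condicionintegralNC} --- integrate the diffusion equation of (\ref{niech}) over $(0,s(t))$, use $u_{s}(s(t),t)=0$ and the $t$-integrability of $\partial_{t}u_{s}(\cdot,t)$ furnished by Lemma~\ref{second} --- one finds $\frac{d}{dt}(\mathcal{T}s)(t)=-\da u_{s}(s(t),t)$ for a.e.\ $t\in(0,T)$. By Proposition~\ref{nonposda} this quantity is $\ge0$, and by Proposition~\ref{daboundlem} it is $\le M$. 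Hence $\mathcal{T}s$ is $M$-Lipschitz, nondecreasing, and equal to $b$ at $t=0$, i.e.\ $\mathcal{T}s\in K$.

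The main obstacle is the continuity of $\mathcal{T}$ on $K$. Let $s_{n}\to s$ in $C([0,T])$ with $s_{n},s\in K$. Since $\mathcal{T}(K)\subset K$ is precompact, it suffices to prove that any uniform limit of a subsequence of $(\mathcal{T}s_{n})$ coincides with $\mathcal{T}s$, and for this it is enough to show $\int_{0}^{s_{n}(t)}u_{s_{n}}(x,t)\,\rmd x\to\int_{0}^{s(t)}u_{s}(x,t)\,\rmd x$ uniformly in $t$. Transforming each problem to the cylinder by $p=x/s_{n}(t)$, the solutions $w_{n}$ admit the mild representation (\ref{wzorv}), and the estimates of Theorem~\ref{first}, Lemma~\ref{second}, Lemma~\ref{third} and Corollary~\ref{regost} hold with constants depending only on $\al,b,M,T$, hence uniformly over $K$; this yields uniform bounds for $(w_{n})$ in $Y_{\gamma}(0,T)$ and in $C([0,T]\times[0,1])$. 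Along a subsequence $w_{n}\to w$ in $C([0,T]\times[0,1])$ and weakly in the relevant space-Sobolev topologies. Since $s_{n}(t)=b+\intt\dot s_{n}(\tau)\,\rmd\tau$ and $s_{n}\to s$ uniformly, $\dot s_{n}\rightharpoonup\dot s$ weakly-$*$ in $L^{\infty}(0,T)$; combining this with the continuous dependence of the evolution family $\{G_{n}(t,\sigma)\}$ on the coefficient $s_{n}^{-(1+\al)}$, one passes to the limit in the transformed problem (\ref{niecw}) and identifies $w$ as its solution for the interface $s$. By uniqueness (Theorem~\ref{existance}), $w=w_{s}$, so $u_{s_{n}}\to u_{s}$ and the desired convergence of integrals follows. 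Thus $\mathcal{T}$ is continuous on $K$.

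By the Schauder fixed point theorem there is $s\in K$ with $\mathcal{T}s=s$; set $u:=u_{s}$. Then $(u,s)$ solves (\ref{niech}) and satisfies the integral identity (\ref{CondInt1N}), so by Theorem~\ref{condicionintegralNC} it also satisfies $\dot s(t)=-\da u(s(t),t)$ for a.e.\ $t$; together with $s$ being $M$-Lipschitz and nondecreasing, this shows that $(u,s)$ is a solution of (\ref{Stefan}) in the sense of Definition~\ref{sol-b>0}. The local regularity $u_{x}(\cdot,t)\in H^{\al+\beta}(\ve,\omega)$ is part of Theorem~\ref{existance}, and uniqueness is exactly Theorem~\ref{uniN}. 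Finally, if $\uz\not\equiv0$ or $h\not\equiv0$, Proposition~\ref{nonposda} gives $\da u(s(t),t)<0$ for every $t\in(0,T]$, hence $\dot s(t)=-\da u(s(t),t)>0$ for a.e.\ $t\in[0,T]$, which completes the argument.
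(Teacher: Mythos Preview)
Your overall architecture is the same as the paper's: the compact convex set $K$, the map built from the MBP solver of Theorem~\ref{existance}, the verification that the map sends $K$ into $K$ via Propositions~\ref{nonposda} and~\ref{daboundlem}, Schauder for existence, and Theorem~\ref{uniN} plus Proposition~\ref{nonposda} for uniqueness and strict monotonicity. The only substantive difference is how continuity of the fixed-point map is established.

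The paper does \emph{not} use compactness or limit identification for continuity. Instead it obtains a direct Lipschitz estimate via the weak extremum principle: for $s_{1},s_{2}\in K$, one applies Theorem~\ref{weakPrinc} to $z^{\ve}:=u_{1}-u_{2}+\ve\,x^{\al}/\Gamma(1+\al)$ on $Q_{s_{\min},T}$, estimates the boundary value $|u_{i}(s_{\min}(t),t)|$ using (\ref{estu}), and after letting $\ve\to 0$ obtains $\max_{Q_{s_{\min},T}}|u_{1}-u_{2}|\le C(b,M,\al)\,\|s_{1}-s_{2}\|_{\infty}$, from which $\|\mathcal{T}s_{1}-\mathcal{T}s_{2}\|_{\infty}\le \tilde{C}\,\|s_{1}-s_{2}\|_{\infty}$ follows immediately. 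This is short, quantitative, and avoids any limit passage in the transformed equation.

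Your subsequence/compactness route can be made to work, but as written it glosses over the delicate point. The phrase ``continuous dependence of the evolution family $\{G_{n}(t,\sigma)\}$ on the coefficient $s_{n}^{-(1+\al)}$'' only handles the generator $A_{n}(t)$, which indeed depends on $s_{n}$ alone; it does \emph{not} address the lower-order drift $x\,\dot s_{n}/s_{n}\,w_{n,x}$ in (\ref{niecw}-i) nor the source $g_{n}$ in (\ref{defg}), both of which contain $\dot s_{n}$ and therefore converge only weakly-$*$. To pass to the limit you must show that $w_{n,x}$ (hence $w_{n}$) converges \emph{strongly} in a space good enough that the product $\dot s_{n}\cdot w_{n,x}$ converges weakly; this requires an Aubin--Lions argument using the $w_{n,t}$ bound from Lemma~\ref{second} together with the uniform $Y_{\gamma}$ estimates, and then one identifies the limit through the weak form of the equation rather than through the mild formula. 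None of this is written, and the compactness in $C([0,T]\times[0,1])$ you invoke is too weak by itself to control $w_{n,x}$. The paper's maximum-principle proof sidesteps this entire issue.
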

\begin{proof}

We follow the idea introduced in the proof of \cite[Theorem 5.1]{Andreucci}.
We define the set
\eqq{
\Sigma := \{s\in C^{0,1}[0,T], \hd 0 \leq \dot{s}\leq M, \hd s(0)=b \}
}{Sigma}
where $M$ is determined by the Proposition \ref{daboundlem}, i.e., $M=||h||_\infty$.
Then for every $s\in \Sigma$ there exists a unique solution to (\ref{niech}), given by Theorem \ref{existance}. We note that $\Sigma$ is a compact and convex subset of a Banach space $C([0,T])$ with a maximum norm.
For $s\in \Sigma$ we define the operator
\[
(Ps)(t) = b - \izt (\da u)(s(\tau),\tau)\rmd \tau,
\]
where $u$ is a solution to (\ref{niech}), corresponding to $s$, given by Theorem \ref{existance}. We would like to apply the Schauder fixed point theorem, thus we have to show that $P:\Sigma\rightarrow\Sigma$ and that it is continuous in maximum norm. Clearly we have $(Ps)(0) = b$ and from Proposition \ref{nonposda} and estimate (\ref{dabound}) we infer \linebreak $
0 \leq \frac{d}{dt}(Ps)(t) = -(\da u)(s(t),t) \leq M.
$ Hence, $P:\Sigma\rightarrow\Sigma$.

\no To prove that $P$ is continuous in maximum norm, at first we make use of \eqref{Equiv-1} to arrive at
\eqq{
(Ps)(t) = \izt h(\tau) \rmd \tau + b + \int_{0}^{b}\uz(x)\rmd x - \int_{0}^{s(t)}u(x,t) \rmd x.
}{pfo}

Now, we take arbitrary $s_{1}$, $s_{2} \in \Sigma$. Let us define $s_{min}(t)=\min\{s_{1}(t), s_{2}(t)\}$,\linebreak $s_{max}(t)=\max\{s_{1}(t), s_{2}(t)\}$. We also define function $i=i(t) = 1$ if $s_{max}(t) = s_{1}(t)$ and $i = 2$ otherwise. Let $u_{1}$ and $u_{2}$ be two solutions to (\ref{niech}), given by Theorem \ref{existance}, corresponding to $s_{1}$ and $s_{2}$ respectively.
Let us define $z(x,t) = u_{1}(x,t)-u_{2}(x,t)$ and $z^{\ve}(x,t) = z(x,t)+\ve \frac{x^{\al}}{\Gamma(1+\al)}$. Then $z^{\ve}$ satisfies
 \[
 \left\{ \begin{array}{ll}
z^{\ve}_{t} - \poch \da z^{\ve} = 0 & \textrm{ in } \{(x,t):0<x<s_{min}(t), 0<t<T\}=: Q_{s_{min},T}, \\
\da z^{\ve}(0,t) = \ve,  & \textrm{ for  } t \in (0,T), \\
z^{\ve}(x,0) = \ve \frac{x^{\al}}{\Gamma(1+\al)} & \textrm{ in } 0<x<b. \\
\end{array} \right. \]

\no From Theorem \ref{weakPrinc} we obtain that $z^{\ve}$ attains its maximum on the parabolic boundary. We may estimate
\[
\abs{z^{\ve}(s_{min}(t),t)} \leq \abs{u_{1}(s_{min}(t),t)} + \abs{u_{2}(s_{min}(t),t)} + \ve \frac{s_{min}^{\al}(T)}{\Gamma(1+\al)} = \abs{u_{i}(s_{min}(t),t)}+ \ve \frac{s_{min}^{\al}(T)}{\Gamma(1+\al)}
\]
and since $z^{\ve}(x,0) = \ve \frac{x^{\al}}{\Gamma(1+\al)} \leq \ve\frac{s_{min}^{\al}(T)}{\Gamma(1+\al)}$ and $\da z^{\ve}(0,t) > 0$ we obtain that
\[
\max_{Q_{s_{min},T}}z^{\ve} \leq \abs{u_{i}(s_{min}(t),t)}+ \ve \frac{s_{min}^{\al}(T)}{\Gamma(1+\al)}.
\]
Then, applying the estimate (\ref{estu}) from Proposition \ref{daboundlem} we get
\begin{equation}
\begin{split}
\abs{u_{i}(s_{min}(t),t)} &\leq \frac{M}{\Gamma(1+\al)}\left(s_{max}^\al(t)-s_{min}^\al(t)\right) \leq \frac{M}{\Gamma(1+\al)}\frac{s_{max}^\al(t)-s_{min}^\al(t)}{s_{max}(t)-s_{min}(t)}\left(s_{max}(t)-s_{min}(t)\right) \\
 &\leq \frac{M}{\Gamma(1+\al)} s_{max}^{\al-1}(t)\frac{1-\left(\frac{s_{min}(t)}{s_{max}(t)}\right)^\al}{1-\frac{s_{min}(t)}{s_{max}(t)}}  \max_{\tau\in [0,t]}\abs{s_{1}(\tau)-s_{2}(\tau)}\leq  \frac{M b^{\al-1} }{\Gamma(1+\al)} \max_{\tau\in [0,t]}\abs{s_{1}(\tau)-s_{2}(\tau)}
\end{split}
\end{equation}
Hence,
\[
\max_{Q_{s_{min},T}} z = \max_{Q_{s_{min},T}}(z^{\ve}-\ve \frac{x^{\al}}{\Gamma(1-\al)}) \leq \frac{M b^{\al-1} }{\Gamma(1+\al)}\max_{\tau\in [0,t]}\abs{s_{1}(\tau)-s_{2}(\tau)} + \ve \frac{s_{min}^{\al}(T)}{\Gamma(1+\al)}.
\]
Passing with $\ve$ to zero we obtain
\[
\max_{Q_{s_{min},T}}z \leq \frac{M b^{\al-1} }{\Gamma(1+\al)}\max_{\tau\in [0,t]}\abs{s_{1}(\tau)-s_{2}(\tau)}.
\]

To estimate $z$ from below we proceed similarly by  introducing $z_{\ve}(x,t) = z(x,t) - \ve \frac{x^{\al}}{\Gamma(1+\al)}$. Together we obtain 
\[
\max_{Q_{s_{min},T}}\abs{z}\leq \frac{M b^{\al-1} }{\Gamma(1+\al)}\max_{\tau\in [0,t]}\abs{s_{1}(\tau)-s_{2}(\tau)}.
\]
Furthermore, estimate (\ref{estu}) implies that
\[
\int_{s_{min}(t)}^{s_{max}(t)}u_{i}(x,t)\rmd x \leq \frac{M b^{\al-1} }{\Gamma(1+\al)} \int_{s_{min}(t)}^{s_{max}(t)}(s^\al_{max}(t) - x^\al)\rmd x
\leq  \frac{M b^{\al-1} }{\Gamma(1+\al)}s^\al_{max}(T)(s_{max}(t) - s_{min}(t)).
\]
Finally, we may estimate
\[
\abs{(Ps_{2})(t)-(Ps_{1})(t)} = \abs{\int_{0}^{s_{2}(t)}u_{2}(x,t)\rmd x - \int_{0}^{s_{1}(t)}u_{1}(x,t)\rmd x}
\]
\[
\leq \int_{0}^{s_{min}(t)}\abs{ z(x,t)} \rmd x + \int_{s_{min}(t)}^{s_{max}(t)}u_{i}(x,t)\rmd x
\]
\[
\leq s_{min}(t)\max_{Q_{s_{min},T}}\abs{z}+ \frac{M b^{\al-1} }{\Gamma(1+\al)}s^\al_{max}(T)\max_{\tau\in[0,t]}\abs{s_{1}(\tau) - s_{2}(\tau)}
\]
\[
\leq (b+MT)\frac{M b^{\al-1} }{\Gamma(1+\al)}\max_{\tau\in[0,t]}\abs{s_{1}(\tau) - s_{2}(\tau)} +\frac{M b^{\al-1} }{\Gamma(1+\al)}s^\al_{max}(T)\max_{\tau\in[0,t]}\abs{s_{1}(\tau) - s_{2}(\tau)}\]
\[ \leq \tilde{C}\max_{\tau\in[0,t]}\abs{s_{1}(\tau) - s_{2}(\tau)}.
\]
Thus $P$ is continuous and by the Schauder fixed point theorem there exists a fixed point of~$P$.
In this way we have proven that there exists a pair $(u,s)$ that satisfies the system (\ref{Stefan}), where $s \in \Sigma$ and $u$ is given by Theorem \ref{existance}. A uniqueness of the solution follows from Theorem \ref{uniN}. We finish the proof applying Proposition~\ref{nonposda}.
\end{proof}

\section{The solution for the $b=0$ case.}

This chapter is devoted to  problem  (\ref{IFSSP}), that is, the $b=0$ case and we recall it for the benefit of the reader:  Find the pair of functions $u\colon Q_{s,T}\rightarrow \bbR$ and $s\colon [0,T]\rightarrow \bbR$ sufficiently regular, such that
\begin{equation}\label{IFSSP-bis}
\begin{array}{lll}
(i)& u_t(x,t)=\frac{\p}{\p x} \, D_x^{\al} u(x,t) & 0<x<s(t), 0<t<T,\\
(ii) & -D_x^{\al} u(0^+,t)=h(t)\geq 0  &  0<t<T,\\
(iii) & s(0)=0 & \\
(iv) & u(s(t),t)=0 & 0<t<T,\\
(v) & \dot{s}(t)=-\lim\limits_{x\rightarrow s(t)^-} \,D_x^{\al} u(x,t) & 0<t<T.
\end{array}
\end{equation}

We find a solution to problem \eqref{IFSSP-bis} by an approximation method. Precisely, we approximate \eqref{IFSSP-bis} by a sequence  of problems (\ref{FSSP-al-N-2}) with $b = 1/m$ for $m \in \mathbb{N}$. Hence, the $b$ - independent energy estimates for the MBP problem (\ref{niech}) will be the essential tool, which is developed in the next subsection.

\subsection{Energy estimates}
We firstly  recall a useful result from \cite{KY}.

\begin{prop}\cite[Proposition 6.10]{KY}
If $w \in AC[0, L]$, then for any $\al \in (0,1)$ the following equality holds
\[
\int_{0}^{L} \p^{\al}w(x) \cdot w(x)dx = \frac{\al}{4} \int_{0}^{L} \int_{0}^{L} \frac{\abs{w(x)-w(p)}^{2}}{\abs{x-p}^{1+\al}}\rmd p \rmd x
\]
\[
+\frac{1}{2\Gamma(1-\al)}\int_{0}^{L} [(L-x)^{-\al} + x^{-\al}]\abs{w(x)}^{2} \rmd x.
\]
Hence, there exists a positive constant $c(\al)$ which depends only on $\al$, such that
\eqq{
\int_{0}^{L} \p^{\al}w(x) \cdot w(x)\rmd x  \geq c(\al)[1+L^{-\al}] \norm{w}_{H^{\frac{\al}{2}}(0,L)}^{2}.
}{AH}
\end{prop}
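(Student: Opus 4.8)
This is the ``fractional integration by parts'' (Stroock--Varopoulos type) identity, and I would obtain it in four short steps, after which \eqref{AH} is immediate. \emph{Step 1 (pointwise hypersingular representation).} First I would show that for $w\in AC[0,L]$ and a.e.\ $x\in(0,L)$
\[
\Gamma(1-\al)\,\p^{\al}w(x)=\frac{w(x)}{x^{\al}}+\al\int_{0}^{x}\frac{w(x)-w(p)}{(x-p)^{\al+1}}\,\rmd p .
\]
This follows directly from $\p^{\al}w=\frac{1}{\Gamma(1-\al)}\frac{d}{dx}\int_{0}^{x}(x-p)^{-\al}w(p)\,\rmd p$ by writing $w(p)=[w(p)-w(x)]+w(x)$, using $\int_{0}^{x}(x-p)^{-\al}\rmd p=\frac{x^{1-\al}}{1-\al}$, and differentiating under the integral sign via the Leibniz rule: at a Lebesgue point of $w'$ the boundary contribution $(x-p)^{-\al}[w(p)-w(x)]$ as $p\to x$ vanishes since $|w(x)-w(p)|=O(x-p)$ there, which is also what makes the hypersingular integral convergent. (Alternatively, combine Proposition~\ref{propo frac}, relating $\p^{\al}$ to the Caputo derivative and the boundary term $\frac{w(0)}{\Gamma(1-\al)x^{\al}}$, with an integration by parts in $\int_{0}^{x}(x-p)^{-\al}w'(p)\,\rmd p$.)

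\emph{Step 2 (multiply and symmetrise).} Multiplying the representation by $w(x)$ and integrating over $(0,L)$ gives
\[
\Gamma(1-\al)\int_{0}^{L}\p^{\al}w\cdot w\,\rmd x=\int_{0}^{L}\frac{|w(x)|^{2}}{x^{\al}}\,\rmd x+\al K,\qquad K:=\iint_{0<p<x<L}\frac{(w(x)-w(p))w(x)}{(x-p)^{\al+1}}\,\rmd p\,\rmd x .
\]
Introducing the companion integral $\widetilde K:=\iint_{0<p<x<L}\frac{(w(p)-w(x))w(p)}{(x-p)^{\al+1}}\rmd p\,\rmd x$ and relabelling $x\leftrightarrow p$ in the full square, one obtains $K+\widetilde K=\tfrac12\iint_{(0,L)^{2}}\frac{|w(x)-w(p)|^{2}}{|x-p|^{\al+1}}\rmd p\,\rmd x$ and $K-\widetilde K=\iint_{0<p<x<L}\frac{|w(x)|^{2}-|w(p)|^{2}}{(x-p)^{\al+1}}\rmd p\,\rmd x$, hence
\[
K=\frac14\iint_{(0,L)^{2}}\frac{|w(x)-w(p)|^{2}}{|x-p|^{\al+1}}\rmd p\,\rmd x+\frac12\iint_{0<p<x<L}\frac{|w(x)|^{2}-|w(p)|^{2}}{(x-p)^{\al+1}}\rmd p\,\rmd x .
\]
All these integrals converge absolutely because $w\in AC[0,L]$ forces $|w(x)-w(p)|^{2}$ and $|w(x)|^{2}-|w(p)|^{2}$ to vanish fast enough on the diagonal; to be fully rigorous one truncates to $|x-p|>\ve$, argues as above, and lets $\ve\to0$, using only $w'\in L^{1}$ and the Lebesgue differentiation theorem.

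\emph{Step 3 (the boundary term).} Writing $|w(x)|^{2}-|w(p)|^{2}=\int_{p}^{x}(|w|^{2})'(r)\,\rmd r$ (valid since $|w|^{2}\in AC[0,L]$), applying Fubini, and computing the elementary kernel integral $\int_{0}^{r}\int_{r}^{L}(x-p)^{-1-\al}\rmd x\,\rmd p=\frac{r^{1-\al}+(L-r)^{1-\al}-L^{1-\al}}{\al(1-\al)}=:\phi(r)$, and noting $\phi(0)=\phi(L)=0$, an integration by parts kills the boundary terms and yields
\[
\iint_{0<p<x<L}\frac{|w(x)|^{2}-|w(p)|^{2}}{(x-p)^{\al+1}}\rmd p\,\rmd x=-\int_{0}^{L}|w(r)|^{2}\phi'(r)\,\rmd r=\frac1\al\int_{0}^{L}|w(x)|^{2}\big[(L-x)^{-\al}-x^{-\al}\big]\rmd x .
\]
\emph{Step 4 (assemble, then the inequality).} Substituting Step~3 into Step~2, the two $x^{-\al}$-weighted terms combine into $\tfrac12\int_{0}^{L}[x^{-\al}+(L-x)^{-\al}]|w(x)|^{2}\rmd x$, giving the stated identity (with the double integral carrying the coefficient $\frac{\al}{4\Gamma(1-\al)}$). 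For \eqref{AH}: the double integral on the right is a fixed multiple of the Gagliardo seminorm $[w]_{H^{\al/2}(0,L)}^{2}$, and $x^{-\al}+(L-x)^{-\al}\ge c_{\al}L^{-\al}$ on $(0,L)$, so the boundary term is $\ge c_{\al}L^{-\al}\|w\|_{L^{2}(0,L)}^{2}$; adding these two lower bounds and invoking the equivalence of $\|\cdot\|_{H^{\al/2}(0,L)}$ with $\big(L^{-\al}\|w\|_{L^{2}}^{2}+[w]_{H^{\al/2}}^{2}\big)^{1/2}$ gives $\int_{0}^{L}\p^{\al}w\cdot w\,\rmd x\ge c(\al)[1+L^{-\al}]\|w\|_{H^{\al/2}(0,L)}^{2}$.

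The only genuinely delicate point --- the main obstacle --- is justifying Steps~1--3 for a \emph{merely absolutely continuous} $w$: controlling the diagonal singularity of the hypersingular integral and the vanishing of the various boundary terms without any extra smoothness. Once one commits to the $\ve$-truncation near the diagonal and passes to the limit using $w'\in L^{1}$, everything else is routine bookkeeping.
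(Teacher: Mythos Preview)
The paper does not supply a proof of this proposition; it is quoted from the cited reference [KY] without argument, so there is no in-paper proof to compare against. Your derivation of the \emph{identity} is correct and is the standard route: the hypersingular (Marchaud-type) representation of $\p^{\al}w$ in Step~1, the symmetrisation $K=\tfrac12(K+\widetilde K)+\tfrac12(K-\widetilde K)$ in Step~2, and the Fubini/integration-by-parts computation of the antisymmetric part in Step~3 all check out line by line. You also correctly note that the coefficient in front of the double integral comes out as $\frac{\al}{4\Gamma(1-\al)}$, not $\frac{\al}{4}$ as printed in the paper; that is a transcription slip in the statement.

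There is, however, a genuine gap in your Step~4 for the inequality \eqref{AH}, and it is in fact a defect of the statement as printed. The ``equivalence of $\|\cdot\|_{H^{\al/2}(0,L)}$ with $\big(L^{-\al}\|w\|_{L^{2}}^{2}+[w]_{H^{\al/2}}^{2}\big)^{1/2}$'' that you invoke does \emph{not} hold with $L$-independent constants under the usual norm $\|w\|_{H^{\al/2}}^{2}=\|w\|_{L^{2}}^{2}+[w]_{H^{\al/2}}^{2}$. A scaling check $w(x)=\tilde w(x/L)$ shows \eqref{AH} with the factor $[1+L^{-\al}]$ is false for small $L$: the left side scales like $L^{1-\al}$, whereas the right side contains the term $c(\al)L^{-\al}\cdot L^{1-\al}[\tilde w]_{H^{\al/2}(0,1)}^{2}$, which blows up as $L\to 0$ whenever $[\tilde w]_{H^{\al/2}}\neq 0$. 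What the identity genuinely yields, and what the paper actually uses afterwards (via \eqref{sbind} in Lemmas~\ref{estfirst-Lemma} and~\ref{szac2-lemma}), is
\[
\int_{0}^{L}\p^{\al}w\cdot w\,\rmd x \;\ge\; c(\al)\Big([w]_{H^{\al/2}(0,L)}^{2}+L^{-\al}\|w\|_{L^{2}(0,L)}^{2}\Big),
\]
after which the upper bound $L\le 1+MT$ converts $L^{-\al}$ into an $(\al,M,T)$-constant. So your Step~4 should stop at this correct estimate; the prefactor $[1+L^{-\al}]$ in front of the full $H^{\al/2}$ norm in \eqref{AH} appears to be another misprint.
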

\begin{remark}\label{reAH}
  We note that the proposition above holds also for $w \in H^{\frac{\al}{2}}(0,L)$ if we understand the integral $\int_{0}^{L} \p^{\al}w(x) \cdot w(x)dx $ as a duality pairing in $H^{-\frac{\al}{2}}(0,L) \times H^{\frac{\al}{2}}(0,L)$. Indeed, in view of \cite[Proposition 5]{ja} we have that $\p^{\al}w \in H^{-\frac{\al}{2}}(0,L)$ and the result follows by density argument.
\end{remark}
We will begin with the first energy estimate, thus we will multiply the equation  $(\ref{IFSSP-bis}-i)$ by a solution. We will formulate the result in terms of general solutions to a moving boundary problem (\ref{niech}). Since in a subsequent section, we will be interested in uniform estimates with respect to small $b > 0$, here we assume that $b \in (0,1]$. 
\begin{lemma}\label{estfirst-Lemma}{(A first energy estimate)}
Let  $h$ and $s$ verify \eqref{zalh} and \eqref{zals} respectively  and let $u$ be the solution to problem \eqref{niech} with $b \in (0,1]$ given by Theorem \ref{existance}. Then, there exists a positive constant $c = c(\al,M,T)$, where $M$ comes from (\ref{zals}) such that for every $t > 0$
\eqq{
\int_{0}^{s(t)} \abs{u(x,\tau)}^{2}\rmd x +  \izt \norm{u(\cdot,\tau)}^{2}_{H^{\frac{1+\al}{2}}(0,s(\tau))}\rmd \tau \leq  c(\al,M,T)(\norm{h}_{L^{2}(0,t)}^{2} + \norm{u_{0}}^{2}_{L^{2}(0,b)}).
}{estfirst}
\end{lemma}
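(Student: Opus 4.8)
The plan is to run the $L^{2}$ energy argument — multiply the equation in \eqref{niech} by $u$, integrate over the current section $(0,s(t))$, integrate by parts and close — taking into account two features absent from the classical Neumann case: the boundary term at $x=0$ does not vanish (it equals $h(t)u(0,t)$), and near $x=0$ the derivative $u_{x}(\cdot,t)$ lies only in $L^{1}$, not in $L^{2}$, when $\al\le\frac12$. With the regularity from Theorem~\ref{existance} — $u\in C(\overline{Q_{s,T}})$, $u(\cdot,t)\in AC[0,s(t)]$, $u_{t}(\cdot,t),\poch\da u(\cdot,t)\in L^{2}(0,s(t))$ (whence $\da u(\cdot,t)\in H^{1}(0,s(t))$) — Leibniz' rule together with $u(s(t),t)=0$ gives, for a.e. $t$,
\[
\frac{d}{dt}\left(\frac12\int_{0}^{s(t)}|u(x,t)|^{2}\,\rmd x\right)=\int_{0}^{s(t)}u\,u_{t}\,\rmd x=\int_{0}^{s(t)}u\,\poch\da u\,\rmd x .
\]
Since $\da u\cdot u\in AC[0,s(t)]$ with $(\da u\cdot u)'=u_{t}u+\da u\,u_{x}\in L^{1}(0,s(t))$, the integration by parts is licit, and using $-\da u(0,t)=h(t)$ and $u(s(t),t)=0$ one obtains
\[
\frac{d}{dt}\left(\frac12\int_{0}^{s(t)}|u(x,t)|^{2}\,\rmd x\right)+\int_{0}^{s(t)}\da u\cdot u_{x}\,\rmd x=h(t)\,u(0,t).
\]

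For the bulk term, Proposition~\ref{propo frac}, item~\ref{propClave2}, gives $u_{x}=\p^{1-\al}\da u$ a.e., so it equals $\int_{0}^{s(t)}\p^{1-\al}w\cdot w\,\rmd x$ with $w:=\da u(\cdot,t)\in H^{1}(0,s(t))$; reading it as a duality pairing (Remark~\ref{reAH}) and applying \eqref{AH} with $\al$ replaced by $1-\al$,
\[
\int_{0}^{s(t)}\da u\cdot u_{x}\,\rmd x\ \geq\ c(\al)\bigl[1+s(t)^{-(1-\al)}\bigr]\,\norm{\da u(\cdot,t)}_{H^{(1-\al)/2}(0,s(t))}^{2},
\]
and it is essential to keep the singular factor $s(t)^{-(1-\al)}$. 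Next, from $\da u=I^{1-\al}u_{x}$ one has $u(x,t)=u(0,t)+I^{\al}[\da u(\cdot,t)](x)$, hence $u(0,t)=-I^{\al}[\da u(\cdot,t)](s(t))$; a direct estimate of this integral (of Hardy type) together with the rescaled mapping property of $I^{\al}$ from Corollary~\ref{eqc} yields the sharp bounds
\[
|u(0,t)|\ \leq\ c(\al)\,s(t)^{\al-\frac12}\norm{\da u(\cdot,t)}_{H^{(1-\al)/2}(0,s(t))},\qquad \norm{I^{\al}[\da u(\cdot,t)]}_{H^{(1+\al)/2}(0,s(t))}\ \leq\ c(\al)\,s(t)^{\frac{\al-1}{2}}\norm{\da u(\cdot,t)}_{H^{(1-\al)/2}(0,s(t))}.
\]
Since $\norm{1}_{H^{(1+\al)/2}(0,s(t))}=s(t)^{1/2}$ and $s(t)\le b+Mt\le 1+MT$ by \eqref{zals} and $b\le1$, inserting the two bounds into $u=u(0,t)+I^{\al}[\da u(\cdot,t)]$ gives
\[
\norm{u(\cdot,t)}_{H^{(1+\al)/2}(0,s(t))}^{2}\ \leq\ c(\al,M,T)\bigl[1+s(t)^{-(1-\al)}\bigr]\,\norm{\da u(\cdot,t)}_{H^{(1-\al)/2}(0,s(t))}^{2}.
\]

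To close, using the first bound above, $s(t)\le1+MT$ and Young's inequality with a small parameter $\e>0$,
\[
h(t)\,u(0,t)\ \leq\ c(\al)\,h(t)\,s(t)^{\al-\frac12}\norm{\da u(\cdot,t)}_{H^{(1-\al)/2}(0,s(t))}\ \leq\ \e\,s(t)^{-(1-\al)}\norm{\da u(\cdot,t)}_{H^{(1-\al)/2}(0,s(t))}^{2}+c(\al,M,T)\,\e^{-1}h(t)^{2},
\]
where the factor $s(t)^{\al}$ produced by the regrouping of powers was absorbed into the ($s(t)$-independent) constant. Choosing $\e$ so that the first term on the right is dominated by the bulk term, substituting the two previous displays into the energy identity, integrating over $(0,t)$, then using the norm comparison to bound $\int_{0}^{t}[1+s(\tau)^{-(1-\al)}]\norm{\da u(\cdot,\tau)}_{H^{(1-\al)/2}}^{2}\rmd\tau$ from below by a multiple of $\int_{0}^{t}\norm{u(\cdot,\tau)}_{H^{(1+\al)/2}(0,s(\tau))}^{2}\rmd\tau$, and invoking $u(x,0)=u_{0}(x)$ on $(0,b)$, one arrives at \eqref{estfirst} with $c=c(\al,M,T)$.

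The main obstacle is the $b$-independence of the constant. It forces two things: retaining the degenerate weight $s(t)^{-(1-\al)}$ in the coercivity \eqref{AH} — which is exactly what controls $\norm{u(\cdot,t)}_{H^{(1+\al)/2}}$ uniformly as $s(t)\to0$ — and handling the boundary value $u(0,t)$, which degenerates like $s(t)^{\al-1/2}$, by pairing it against that same weight in Young's inequality. A secondary technical point is that, for $\al\le\frac12$, $u_{x}(\cdot,t)\notin L^{2}$ near $x=0$, so the integration by parts and the use of \eqref{AH} must be carried out through $\p^{1-\al}\da u$ and the duality interpretation of Remark~\ref{reAH}, where only $\da u(\cdot,t)\in H^{1}(0,s(t))$ is needed.
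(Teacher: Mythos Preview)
Your argument is correct and reaches \eqref{estfirst} by essentially the same energy identity as the paper, but the way you close it is genuinely different. After the common step
\[
\frac{1}{2}\frac{d}{dt}\int_{0}^{s(t)}|u|^{2}\,\rmd x+\int_{0}^{s(t)}\da u\cdot u_{x}\,\rmd x=h(t)\,u(0,t),
\]
the paper bounds the bulk term from below using \eqref{AH} and then proves the key inequality
\(\norm{D^{\frac{1+\al}{2}}u(\cdot,t)}_{L^{2}(0,s(t))}\geq c\,\norm{u(\cdot,t)}_{H^{\frac{1+\al}{2}}(0,s(t))}\)
via the \emph{right-side} representation $u=-I_{-}^{\frac{1+\al}{2}}I_{-}^{\frac{1-\al}{2}}u_{x}$ (which is where the condition $u(s(t),t)=0$ enters); the boundary value $u(0,t)$ is then controlled a posteriori by the Sobolev trace estimate $|u(0,\tau)|\leq C\norm{u(\cdot,\tau)}_{H^{\frac{1+\al}{2}}}$ and absorbed after integrating in time. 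You instead exploit the \emph{left-side} decomposition $u(\cdot,t)=u(0,t)+I^{\al}[\da u(\cdot,t)]$, evaluate it at $x=s(t)$ to isolate $u(0,t)$, and track explicitly the powers of $s(t)$ coming from rescaling so that the degenerate weight $s(t)^{-(1-\al)}$ in \eqref{AH} is kept and matched against the boundary term through Young's inequality.

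Both routes work. Your approach makes the $b$-independence of the constant completely transparent: every potentially dangerous small-$s(t)$ factor is displayed and paired off. The paper's approach via right-side operators is a bit slicker algebraically and yields the intermediate inequality \eqref{rowno} as a standalone tool, but the uniformity in $s(t)$ of the trace/isomorphism constants is left implicit (it ultimately rests on $s(t)\leq 1+MT$, the same upper bound you use). One minor point: your intermediate scaling bounds on $|u(0,t)|$ and $\norm{I^{\al}[\da u]}_{H^{(1+\al)/2}}$ are stated with constants $c(\al)$, but in fact they also require $s(t)\leq 1+MT$ to be uniform; since you invoke exactly this in the next line and write the combined estimate with $c(\al,M,T)$, the argument is unaffected.
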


\begin{proof}
In order to get (\ref{estfirst}) we multiply $(\ref{niech}-i)$ by $u$ and integrate from zero to $s(t)$
\[
\int_{0}^{s(t)}u_{t}(x,t)\cdot u(x,t)\rmd x - \int_{0}^{s(t)}\poch\da u(x,t)\cdot u(x,t)\rmd x = 0.
\]
Integrating by parts we arrive at
\[
\frac{1}{2}\int_{0}^{s(t)} \frac{d}{dt}\abs{u(x,t)}^{2}\rmd x +h(t)u(0,t)+ \int_{0}^{s(t)}\da u(x,t)\cdot u_{x}(x,t)\rmd x = 0.
\]
We note that by (\ref{zals}) there holds $(s(t))^{-\al} \geq (b+MT)^{-\al}$ and applying $b \in (0,1]$ we obtain that
\eqq{
(s(t))^{-\al} \geq (1+MT)^{-\al} \m{ for every } t \in (0,T].
}{sbind}
We note that by Theorem~\ref{existance}, for every $t \in (0,T)$ $\poch \da u(\cdot,t), u_{t}(\cdot,t) \in L^{2}(0,s(t))$. Hence, in particular, for every $t \in (0,T)$ $\da u (\cdot,t) \in AC[0,s(t)]$, applying estimate (\ref{AH}) together with (\ref{sbind}) we obtain that there exists $c = c(\al,M,T)$ such that 
\[
\int_{0}^{s(t)}\da u(x,t)\cdot u_{x}(x,t)\rmd x = \int_{0}^{s(t)}\da u(x,t)\cdot \p^{1-\al}\da u(x,t)\rmd x \geq c \norm{\da u(\cdot,t)}^{2}_{H^{\frac{1-\al}{2}}(0,s(t))}
\]
\[
\geq c \norm{\p^{\frac{1-\al}{2}}\da u(\cdot,t)}^{2}_{L^{2}(0,s(t))}
= c \norm{D^{\frac{\al+1}{2}}u(\cdot,t)}^{2}_{L^{2}(0,s(t))},
\]
where in the second estimate we applied Proposition \ref{eq}.
Despite that $u_{x} \not \in {}_{0}H^{\al}(0,s(t))$, we may repeat the proof of an inequality
\eqq{
\norm{D^{\frac{\al+1}{2}}u(\cdot,t)}_{L^{2}(0,s(t))} \geq c \norm{u(\cdot,t)}_{H^{\frac{1+\al}{2}}(0,s(t))}
}{rowno}
from \cite[Lemma 2]{ja}. Indeed, since $u(s(t),t) = 0$ we have
\[
u(x,t)=-\int_{x}^{s(t)} u_{x}(p,t)\rmd p = -(I_{-}u_{x})(x,t) =- (I_{-}^{\frac{1+\al}{2}}I_{-}^{\frac{1-\al}{2}}u_{x})(x,t),
\]
where the right-side operators where defined in Remark \ref{reright}. From Proposition \ref{eq} we may estimate
\[
\norm{ u}_{H^{\frac{\al+1}{2}}(0,s(t))} = \norm{I^{\frac{1+\al}{2}}_{-}I^{\frac{1-\al}{2}}_{-}u_{x}}_{{}^{0}H^{\frac{\al+1}{2}}(0,s(t))} \leq c\norm{I^{\frac{1-\al}{2}}_{-}u_{x}}_{L^{2}(0,s(t))},
\]
where 
$
{}^{0}H^{\frac{\al+1}{2}}(0,s(t)) =[L^{2}(0,s(t)),{}^{0}H^{1}(0,s(t))], \hd {}^{0}H^{1}(0,s(t)):=\{ f\in H^{1}(0,s(t)), f(s(t)) = 0  \}.
$

\no Applying Proposition 4 and Proposition 5 from \cite{ja} we may estimate further
\[
\norm{ u}_{H^{\frac{\al+1}{2}}(0,s(t))} \leq c \norm{ u_{x}}_{H^{\frac{\al-1}{2}}(0,s(t))} = c\norm{\p^{\frac{1-\al}{2}}I^{\frac{1-\al}{2}} u_{x}}_{H^{\frac{\al-1}{2}}(0,s(t))} \leq c \norm{I^{\frac{1-\al}{2}} u_{x}}_{L^{2}(0,s(t))}
\]
and we arrive at (\ref{rowno}).
Thus, together we obtain that
\[
\frac{1}{2}\int_{0}^{s(t)} \frac{d}{dt}\abs{u(x,t)}^{2}\rmd x + c \norm{u(\cdot,t)}^{2}_{H^{\frac{1+\al}{2}}(0,s(t))} \leq -h(t)u(0,t),
\]
where $c =c(\al,M,T)$.
We integrate from zero to $t$ to obtain that
\[
\frac{1}{2} \izt \int_{0}^{s(\tau)} \frac{d}{d\tau}\abs{u(x,\tau)}^{2}\rmd x \rmd\tau +c \izt \norm{u(\cdot,\tau)}^{2}_{H^{\frac{1+\al}{2}}(0,s(\tau))}\rmd\tau \leq \abs{\izt h(\tau)u(0,\tau)d\tau}.
\]
Since $u(s(t),t) = 0$ we arrive at
\[
\frac{1}{2}\int_{0}^{s(t)} \abs{u(x,t)}^{2}\rmd x + c \izt \norm{u(\cdot,\tau)}^{2}_{H^{\frac{1+\al}{2}}(0,s(\tau))}\rmd \tau \leq \ve \norm{u(0,\cdot)}_{L^{2}(0,t)}^{2} + \frac{1}{4 \ve}\norm{h}_{L^{2}(0,t)}^{2}+\frac{1}{2}\norm{\uz}_{L^{2}(0,b)}^{2}.
\]
Applying the Sobolev embedding  and taking $\ve = \ve(c)$ small enough we arrive at
(\ref{estfirst}).
\end{proof}

Now we look for higher order estimates on $u$. However, since $u$ has a singularity $x^{\al}$ we will obtain the second energy estimate for the regular part of $u$ defined in \eqref{u=reg+sing}. Notice that, in the aim to simplify the notation further we will denote $u_{reg}$ by $v$, that is to say,
\eqq{
u = v + u_{sin},
}{urep}
where
\[
u_{sin}=
 \left\{ \begin{array}{ll}
h(t)\frac{x^{\al}}{\Gamma(1+\al)} & \textrm{ in } \{(x,t):0<x\leq \frac{1}{2}s(t), 0<t<T\}, \\
\bar{\varphi} & \textrm{ in } \{(x,t):\frac{1}{2}s(t)<x\leq\frac{3}{4}s(t), 0<t<T\}, \\
0 &  \textrm{ in } \{(x,t):\frac{3}{4}s(t)<x\leq s(t), 0<t<T\},\\
\end{array} \right.
\]
where $\bar{\varphi}(x,t)=h(t)s^{\al}(t)\varphi\left(\frac{x}{s(t)}\right)$ for the corresponding auxiliary  function $\varphi$. Recall that, from  \eqref{auxvar}, it holds  that $\bar{\varphi}(\frac{s(t)}{2},t) = h(t)\frac{(s(t)/2)^{\al}}{\Gamma(1+\al)}$ and $\bar{\varphi}(\frac{3s(t)}{4},t)~=~0$.
Besides,  $v$ has the regularity established in Lemma~\ref{third} and Corollary~\ref{regost},  and  
from (\ref{vxb}) it holds that  $v^{m}_{xx} \in H^{\al+\bar{\gamma}-1}(0,s(t))$ for a.a $t \in (0,T]$, for every $0<\bar{\gamma}<\frac{1}{2}$.
Here for simplicity we discuss the case of zero initial condition and, as in the previous lemma, we consider $b \in (0,1]$.
\begin{lemma}\label{szac2-lemma}{(A second energy estimate for $u_{reg}$)}
If $u$ is a solution to problem (\ref{niech}) with zero initial condition and $b \in (0,1]$, given by Theorem~\ref{existance}, then $v$ - the regular part of $u$ given by (\ref{urep}) satisfies the following bound for every $t_{1} \in (0,T)$ and $t>t_1$,
\eqq{
\int_{t_{1}}^{t}\dot{s}(\tau)\abs{v_{x}(s(\tau),\tau)}^{2}\rmd\tau
+\int_{0}^{s(t)}\hspace{-0.2cm} \abs{v_{x}(x,t)}^{2}\rmd x
+  \int_{t_{1}}^{t} \norm{ v_{x}(\cdot,\tau)}^{2}_{H^{\frac{1+\al}{2}}(0,s(\tau))}\hspace{-0.1cm} \rmd\tau
 \leq c(\al,t_{1}, T, M,\norm{h}_{W^{1,\infty}(0,T)}).
}{szac2}
\end{lemma}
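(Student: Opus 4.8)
The plan is to derive a second energy estimate by testing the equation satisfied by $v$ with $-v_{xx}$, which is the natural way to gain one order of regularity. First I would write down the equation satisfied by the regular part $v = u_{reg}$. Recall $u = v + u_{sin}$ where $u_{sin} = h(t)s^{\al}(t)\varphi(x/s(t))$, so that $v$ satisfies an equation of the form
\begin{equation}
v_{t} - \frac{\p}{\p x}\da v = \tilde{g}(x,t) \quad \text{in } Q_{s,T},
\end{equation}
where $\tilde{g}$ is a source term coming from the derivatives of $u_{sin}$; by Proposition \ref{regularity-g} and the boundedness of $h$ in $W^{1,\infty}(0,T)$ (and the fact that $\poch \da x^{\al} = 0$), $\tilde{g}$ is controlled in $L^\infty(0,T; L^2(0,s(t)))$, indeed with more spatial regularity on compact subsets away from the right endpoint. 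Crucially, $v_{x}(0,t) $ vanishes in the sense that $\da v(0,t)=0$ (since $v_x(\cdot,t)\in {}_0H^{\al}(0,1)$ after transforming, equivalently $v$ picks up no singular flux at $x=0$), while $v(s(t),t) = u(s(t),t) - u_{sin}(s(t),t) = 0$ because $u_{sin}\equiv 0$ near $x=s(t)$.

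Next I would multiply the equation for $v$ by $-v_{xx}$ and integrate over $(0,s(t))$. The parabolic term gives, after integrating by parts in $x$ and using $v_x(0,t)=0$ together with Leibniz' rule with the moving boundary,
\begin{equation}
-\int_{0}^{s(t)} v_{t} v_{xx}\,\rmd x = \frac{1}{2}\frac{d}{dt}\int_{0}^{s(t)}\abs{v_{x}(x,t)}^{2}\rmd x - \frac{1}{2}\dot{s}(t)\abs{v_{x}(s(t),t)}^{2} + \text{(boundary terms at } 0\text{)},
\end{equation}
where the boundary contribution at $x=0$ vanishes thanks to $v_x(0,t)=0$ and the fact that $v_t$ is well-behaved there. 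For the fractional term, using Proposition \ref{propo frac} item \ref{propClave}, $\poch \da v = \p^{\al}v_{x}$, so
\begin{equation}
\int_{0}^{s(t)}\left(\frac{\p}{\p x}\da v\right) v_{xx}\,\rmd x = \int_{0}^{s(t)} \p^{\al}v_{x}(x,t)\, v_{xx}(x,t)\,\rmd x = \int_{0}^{s(t)} \p^{\al}v_{x}\cdot \p^{1-\al}\da v\,\rmd x,
\end{equation}
and since $\p^{1-\al}\da v = v_x$ would be the wrong pairing — instead one writes $v_{xx} = \p^{1-\al}(\p^{\al}v_x)$ wherever this is legitimate; more safely I would set $w := v_x$, note $w(0,t)=0$, so $w\in {}_0H^{\al}(0,s(t))$, and use the coercivity estimate (\ref{AH}) from \cite[Proposition 6.10]{KY} in the form $\int_0^{s(t)} \p^{\al}w\cdot w\,\rmd x \geq c(\al)[1+s(t)^{-\al}]\norm{w}_{H^{\al/2}(0,s(t))}^{2}$. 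But here we need $\int \p^\al w \cdot w_x\,\rmd x$; instead I follow the pattern of Lemma \ref{estfirst-Lemma}: write $\da v = I^{1-\al}v_x$, so $\int_0^{s(t)}(\p^\al v_x) v_{xx}\,\rmd x = \int_0^{s(t)} (\p^\al v_x)\,\p^{1-\al}(\da v_x)\,\rmd x$ after identifying $v_{xx}=\p^{1-\al}\da v_x$, and apply Proposition \ref{eq} and the coercivity exactly as in the first energy estimate to bound this below by $c\norm{v_x(\cdot,t)}_{H^{(1+\al)/2}(0,s(t))}^{2}$, using (\ref{sbind}) to make the constant $b$-independent. The source term is handled by $\abs{\int_0^{s(t)}\tilde g\, v_{xx}\,\rmd x}$, which after one integration by parts becomes $\abs{\int_0^{s(t)} \tilde g_x\, v_x\,\rmd x + \text{bdry}}$, absorbed by Young's inequality into the coercive term plus $\norm{\tilde g}_{H^1}^2$; the boundary term at $s(t)$ vanishes since $\tilde g$ and its relevant traces vanish near $x=s(t)$, and at $x=0$ the term $\tilde g(0,t)v_x(0,t)=0$.

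Assembling, I obtain the differential inequality
\begin{equation}
\frac{1}{2}\frac{d}{dt}\int_{0}^{s(t)}\abs{v_{x}}^{2}\rmd x - \frac{1}{2}\dot s(t)\abs{v_x(s(t),t)}^{2} + c\norm{v_x(\cdot,t)}_{H^{(1+\al)/2}(0,s(t))}^{2} \leq C\left(1 + \int_0^{s(t)}\abs{v_x}^2\rmd x\right),
\end{equation}
where the sign of the $\dot s$ term is the favorable one (it is nonnegative, so it can be kept on the left) — wait, checking the Leibniz sign: $\frac{d}{dt}\int_0^{s(t)} \abs{v_x}^2 = \int_0^{s(t)}\p_t\abs{v_x}^2 + \dot s\abs{v_x(s(t),t)}^2$, so moving it gives $+\frac12\dot s\abs{v_x(s(t),t)}^2$ on the left-hand side of the inequality, which is exactly the term appearing in (\ref{szac2}). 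Integrating from $t_1$ to $t$ and applying Grönwall's lemma, using that $\int_{0}^{s(t_1)}\abs{v_x(\cdot,t_1)}^2\rmd x$ is finite by Lemma \ref{third} (valid for any fixed $t_1\in(0,T)$), yields (\ref{szac2}) with a constant depending on $\al, t_1, T, M$ and $\norm{h}_{W^{1,\infty}(0,T)}$; the need to start from a positive time $t_1$ is because $v_x(\cdot,t)$ is only locally bounded near $t=0$. The main obstacle I anticipate is making the integration by parts in the fractional term fully rigorous — i.e., justifying $v_{xx}=\p^{1-\al}\da v_x$ and the manipulation $\int (\p^\al v_x)v_{xx} = \int (\p^{\frac{1-\al}{2}}\da v_x)^2 \geq c\norm{\cdot}^2$ — given only the regularity $v_{xx}\in H^{\al+\bar\gamma-1}(0,s(t))$ from (\ref{vxb}); this likely requires a density/approximation argument interpreting the pairing in $H^{-\al/2}\times H^{\al/2}$ as in Remark \ref{reAH}, and care with the boundary terms at $x=0$ where $v_x$ vanishes but $v_{xx}$ may blow up.
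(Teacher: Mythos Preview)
Your strategy --- test with $-v_{xx}$, use the coercivity of the fractional term as in the first energy estimate, and control the source --- is the right one and matches the paper's. But there is a genuine gap in how you close the argument, and it is precisely the point that makes this lemma nontrivial.

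The constant in (\ref{szac2}) must be \emph{independent of $b\in(0,1]$}: this is the whole purpose of the lemma, since it is used in Section~4 to pass to the limit $b=1/m\to 0$. Your final step seeds the Gr\"onwall argument with $\int_{0}^{s(t_{1})}|v_{x}(\cdot,t_{1})|^{2}\rmd x$, which you bound by invoking Lemma~\ref{third}. However, the constants in Lemma~\ref{third} come from the evolution-operator estimates (\ref{r0})--(\ref{r3}), and those constants depend on $b$ (see the sentence following (\ref{r3})). So your argument produces a bound of the form $c(\al,t_{1},T,M,b,\norm{h}_{W^{1,\infty}})$, which is not what is claimed and is useless for the limit. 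The paper avoids this by multiplying through by a temporal cut-off $\Psi$ with $\Psi\equiv 0$ on $[0,t_{1}]$ and $\Psi\equiv 1$ on $[2t_{1},T]$. The resulting commutator term $\int \Psi'(\tau)\int_{0}^{s(\tau)}|v_{x}|^{2}\rmd x\,\rmd\tau$ is then handled by the interpolation inequality $\norm{v}_{H^{1}}^{2}\leq \ve^{1/\theta}\norm{v}_{H^{(3+\al)/2}}^{2}+\ve^{-1/(1-\theta)}\norm{v}_{L^{2}}^{2}$ with a $\tau$-dependent $\ve$, so that the high-order piece is absorbed into the coercive term and the low-order piece is controlled by $\sup_{t}\norm{v(\cdot,t)}_{L^{2}(0,s(t))}$, which \emph{is} $b$-independent by Lemma~\ref{estfirst-Lemma}. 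This bootstrapping from the first energy estimate, rather than from the qualitative regularity of Lemma~\ref{third}, is the missing idea.

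A secondary issue: your treatment of the source term by integrating by parts to get $\int \tilde g_{x}\,v_{x}\,\rmd x$ and then using $\norm{\tilde g}_{H^{1}}$ does not work for $\al\leq\tfrac{1}{2}$, since $u_{sin,t}$ behaves like $x^{\al}$ near $x=0$ and hence $\tilde g_{x}\notin L^{2}(0,s(t))$. The paper instead interprets $\int_{0}^{s(t)}\bar g\,v_{xx}\,\rmd x$ as the duality pairing $H^{(1-\al)/2}\times H^{(\al-1)/2}$, bounding it by $\norm{\bar g}_{H^{(1-\al)/2}}\norm{v_{x}}_{H^{(1+\al)/2}}$; by (\ref{reggh}) one has $\bar g\in L^{\infty}(0,T;{}_{0}H^{\beta}(0,1))$ for any $\beta<\al+\tfrac{1}{2}$, which suffices. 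Your remark that $v_{x}(0,t)=0$ is likewise only pointwise valid for $\al>\tfrac{1}{2}$; the duality framework handles the boundary terms uniformly.
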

\begin{proof}
In view of (\ref{urep}), we know that $v$ satisfies in $Q_{s,T}$ the following identity
\eqq{
v_{t} - \poch \da v = -u_{sin, t} + \poch \da u_{sin}.
}{vfmeq}
Let us denote
\[
\bar{g}:= -u_{sin, t} + \poch \da u_{sin}.
\]
We multiply (\ref{vfmeq}) by $v_{xx}$ which belongs to $H^{\al+\bar{\gamma}-1}(0,s(t))$ for every $0<\bar{\gamma} < \frac{1}{2}$. If its necessary we understand the integrals as a duality pairing
\eqq{
\int_{0}^{s(t)} v_{t} \cdot v_{xx}\rmd x - \int_{0}^{s(t)} \poch \da v\cdot v_{xx} \rmd x = \int_{0}^{s(t)} \bar{g} \cdot v_{xx}\rmd x.
}{secest}
We note that since $ u_{sin}(\frac{1}{2}s(t),t) = h(t)\frac{(s(t))^{\al}}{2^{\al}\Gamma(1+\al)}$ and $ u_{sin}\equiv 0$ for $x \geq \frac{3}{4}s(t)$, we may choose the function $\vf$ in (\ref{auxvar}) such that $\abs{ u_{sin, x}(x,t)} \leq c h(t)(s(t))^{(\al-1)}$ and analogously $\abs{D_{x}^{(k)} u_{sin}(x,t)} \leq c h(t)(s(t))^{\al-k}$ for $k \in \mathbb{N}$. However we are unable to estimate the negative powers of $s(t)$ by $b$ - independent constant. That is why we multiply identity (\ref{secest}) by a smooth cut-off function $\Psi:[0,T]\rightarrow \mathbb{R}$ such that $\Psi\equiv 0$ on $[0,t_{1}]$, $\Psi\equiv 1$ on $[2t_{1},T]$, where we fixed arbitrary $0<t_{1}<T$. Then we have
\eqq{
-\izt\Psi(\tau)\int_{0}^{s(\tau)} v_{t} \cdot v_{xx}\rmd x \rmd\tau + \izt \Psi(\tau)\int_{0}^{s(\tau)} \poch \da v \cdot v_{xx} \rmd x \rmd\tau =- \izt \Psi(\tau) \int_{0}^{s(\tau)} \bar{g}\cdot v_{xx}\rmd x \rmd\tau.
}{secest2}

Let us proceed term by term
\[
-\izt \Psi(\tau) \int_{0}^{s(\tau)} v_{t}(x,\tau) \cdot v_{xx}(x,\tau)\rmd x \rmd\tau
\]
\[
=-\izt \Psi(\tau)v_{t}(s(\tau),\tau)v_{x}(s(\tau),\tau)\rmd\tau +
\izt \Psi(\tau)\int_{0}^{s(\tau)}v_{tx}(x,\tau) \cdot v_{x}(x,\tau)\rmd x \rmd\tau
\]
\[
=-\izt \Psi(\tau)v_{t}(s(\tau),\tau)v_{x}(s(\tau),\tau)\rmd\tau+\frac{1}{2}\izt \Psi(\tau) \int_{0}^{s(\tau)} \frac{d}{d \tau}\abs{v_{x}(x,\tau)}^{2}\rmd x \rmd\tau.
\]
We note that for almost all $t \in (0,T]$ there holds
\[
0 = \frac{d}{dt}v(s(t),t) = v_{t}(s(t),t) + \dot{s}(t)v_{x}(s(t),t).
\]
Thus, we have
\[
-\izt \Psi(\tau) \int_{0}^{s(\tau)} v_{t}(x,\tau) \cdot v_{xx}(x,\tau)\rmd x\rmd\tau
=\izt\Psi(\tau)\dot{s}(\tau)\abs{v_{x}(s(\tau),\tau)}^{2}\rmd\tau
\]
\[
-\frac{1}{2}\izt \Psi(\tau)\dot{s}(\tau)\abs{v_{x}(s(\tau),\tau)}^{2}\rmd\tau
+\frac{1}{2}\izt \Psi(\tau) \frac{d}{d \tau}\int_{0}^{s(\tau)} \abs{v_{x}(x,\tau)}^{2}\rmd x \rmd\tau.
\]
Integrating the last integral by parts we obtain that
\[
-\izt \Psi(\tau) \int_{0}^{s(\tau)} v_{t}(x,\tau) \cdot v_{xx}(x,\tau)\rmd x\rmd\tau
=\frac{1}{2}\izt\Psi(\tau)\dot{s}(\tau)\abs{v_{x}(s(\tau),\tau)}^{2}\rmd\tau
\]
\eqq{
-\frac{1}{2}\izt \Psi'(\tau)\int_{0}^{s(\tau)} \abs{v_{x}(x,\tau)}^{2}\rmd x \rmd\tau
+\Psi(t)\int_{0}^{s(t)} \abs{v_{x}(x,t)}^{2}\rmd x.
}{noa}

To estimate the second term in \eqref{secest2} we apply Remark \ref{reAH}, estimate (\ref{sbind}) and Corollary \ref{eqc}. Then we get
\[
\int_{0}^{s(t)} \poch \da v(x,t)\cdot v_{xx}(x,t)\rmd x =
\int_{0}^{s(t)} \p^{\al}v_{x}(x,t)\cdot \p^{1-\al}\p^{\al}v_{x}(x,t)\rmd x
\]
\[
\geq
c \norm{ \p^{\al}v_{x}(\cdot,t)}^{2}_{H^{\frac{1-\al}{2}}(0,s(t))} \geq c \norm{ v_{x}(\cdot,t)}^{2}_{H^{\frac{1+\al}{2}}(0,s(t))},
\]
where $c = c(\al,M,T)$. Hence,
\eqq{
\izt \Psi(\tau) \int_{0}^{s(\tau)} \poch \da v(x,\tau)\cdot v_{xx}(x,\tau)\rmd x \rmd\tau
\geq \bar{c} \izt \Psi(\tau)  \norm{ v_{x}(\cdot,\tau)}^{2}_{H^{\frac{1+\al}{2}}(0,s(\tau))}\rmd\tau,
}{nob}
where $\bar{c} = c(\al,M,T)$ is fixed from now.
Let us now estimate the term on the right hand side of (\ref{secest2})
\[
\abs{\izt \Psi(\tau)\int_{0}^{s(\tau)} \bar{g}(x,\tau)\cdot v_{xx}(x,\tau)\rmd x \rmd\tau}
\leq \izt \Psi(\tau)\norm{\bar{g}(\cdot,\tau)}_{H^{\frac{1-\al}{2}}(0,s(\tau))} \norm{ v_{xx}(\cdot,\tau)}_{H^{\frac{\al-1}{2}}(0,s(\tau))}\rmd\tau
\]
\[
\leq \frac{\bar{c}}{2}\izt \Psi(\tau)  \norm{ v_{x}(\cdot,\tau)}^{2}_{H^{\frac{\al+1}{2}}(0,s(\tau))}\rmd\tau
+c(\bar{c}) \izt \Psi(\tau)\norm{\bar{g}(\cdot,\tau)}^{2}_{H^{\frac{1-\al}{2}}(0,s(\tau))} \rmd\tau.
\]
We note that, since $\Psi \equiv 0$ on $[0,t_{1}]$ and $\abs{\Psi} \leq 1$, we have
\eqq{
\izt \Psi(\tau)\norm{\bar{g}(\cdot,\tau)}^{2}_{H^{\frac{1-\al}{2}}(0,s(\tau))} \rmd\tau \leq  \int_{t_{1}}^{t}\norm{\bar{g}(\cdot,\tau)}^{2}_{H^{\frac{1-\al}{2}}(0,s(\tau))} \rmd\tau.
}{noc}
Recalling definition of $\bar{g}$  we obtain 
\eqq{
\int_{t_{1}}^{t}\norm{\bar{g}(\cdot,\tau)}^{2}_{H^{\frac{1-\al}{2}}(0,s(\tau))} \rmd\tau \leq \norm{h}_{W^{1,\infty}(0,T)}c(1/s(t_{1})).
}{nod}
Using (\ref{noa}), (\ref{nob}), (\ref{noc}) and (\ref{nod}) in (\ref{secest2}) we arrive at
\[
\frac{1}{2}\izt\Psi(\tau)\dot{s}(\tau)\abs{v_{x}(s(\tau),\tau)}^{2}\rmd\tau
+\Psi(t)\int_{0}^{s(t)} \abs{v_{x}(x,t)}^{2}\rmd x + \frac{\bar{c}}{2} \izt \Psi(\tau)  \norm{ v_{x}(\cdot,\tau)}^{2}_{H^{\frac{1+\al}{2}}(0,s(\tau))}\rmd \tau
\]
\eqq{
 \leq \frac{1}{2}\izt \Psi'(\tau)\int_{0}^{s(\tau)} \abs{v_{x}(x,\tau)}^{2}\rmd x \rmd\tau + \norm{h}_{W^{1,\infty}(0,T)}c(\bar{c},1/s(t_{1})).
}{nof}
 Let us estimate the first term on the right-hand-side. We note that for any fixed $l > 0$.
 \[
 H^{1}(0,l) = [L^{2}(0,l),H^{\frac{3+\al}{2}}(0,l)]_{\frac{2}{3+\al}}.
 \]
 Applying the interpolation inequality with $\theta =\frac{2}{3+\al}$ we obtain for any $\ve > 0$ that
 \[
 \norm{v(\cdot,t)}^{2}_{H^{1}(0,s(t))} \leq c(\theta)\ve  \norm{v(\cdot,t)}_{H^{\frac{3+\al}{2}}(0,s(t))}^{2\theta}\frac{1}{\ve}\norm{v(\cdot,t)}^{2(1-\theta)}_{L^{2}(0,s(t))}
 \]
 Making use of the Young's inequality with parameters $1/\theta$, $1/(1-\theta)$ we arrive at
 \[
 \norm{v(\cdot,t)}^{2}_{H^{1}(0,s(t))} \leq c(\theta)\left[\ve^{\frac{1}{\theta}}\norm{v(\cdot,t)}_{H^{\frac{3+\al}{2}}(0,s(t))}^{2} + \ve^{-\frac{1}{1-\theta}}\norm{v(\cdot,t)}^{2}_{L^{2}(0,s(t))}\right].
 \]
 We choose  $\ve^{1/\theta} = \frac{\bar{c}}{2 c(\theta)}\frac{\Psi(t)}{\Psi'(t)}$. Then, recalling that $\theta = \frac{2}{3+\al}$ we get that $\ve^{-\frac{1}{1-\theta}} = c(\al,M,T)\left(\frac{\Psi'(t)}{\Psi(t)}\right)^{\frac{2}{1+\al}}.$ Inserting this in (\ref{nof}) we obtain 
 \[
\frac{1}{2}\izt\Psi(\tau)\dot{s}(\tau)\abs{v_{x}(s(\tau),\tau)}^{2}\rmd\tau
+\Psi(t)\int_{0}^{s(t)} \abs{v_{x}(x,t)}^{2}dx + \frac{\bar{c}}{4} \izt \Psi(\tau)  \norm{ v_{x}(\cdot,\tau)}^{2}_{H^{\frac{1+\al}{2}}(0,s(\tau))}\rmd\tau
\]
\[
 \leq c(\al)\int_{t_{1}}^{2t_{1}}\Psi'(\tau)\left(\frac{\Psi'(\tau)}{\Psi(\tau)}\right)^{\frac{2}{1+\al}} \norm{v}^{2}_{L^{2}(0,s(\tau))}\rmd\tau + \norm{h}_{W^{1,\infty}(0,T)}c(\bar{c},1/s(t_{1})).
\]
We note that from (\ref{estfirst}) we obtain a bound for $\sup_{t}\norm{u(\cdot,t)}_{L^{2}(0,s(t))}$, which in view of identity (\ref{urep}), gives also that $\sup_{t}\norm{v(\cdot,t)}_{L^{2}(0,s(t))}$ is bounded  by a constant depending only on $\al,M,T,\norm{h}_{L^{2}(0,T)}$. Thus we may estimate
\[
\int_{t_{1}}^{2t_{1}}\Psi'(\tau)\left(\frac{\Psi'(\tau)}{\Psi(\tau)}\right)^{\frac{2}{1+\al}} \norm{v(\cdot,\tau)}^{2}_{L^{2}(0,s(\tau))}\rmd\tau \leq \sup_{t \in [0,T]}\norm{v(\cdot,t)}^{2}_{L^{2}(0,s(t))}\int_{t_{1}}^{2t_{1}}\Psi'(\tau)\left(\frac{\Psi'(\tau)}{\Psi(\tau)}\right)^{\frac{2}{1+\al}}\rmd\tau.
\]
We note that the last integral is finite if the function $\Psi$ is smooth enough. Indeed, it is enough to consider $\Psi$ which increases polynomially near $t_{1}$, where the degree of polynomial is greater than one.
Finally, making use of the fact that $\Psi \geq 0$ and $\Psi\equiv 1$ on $[2t_{1},T]$ we arrive at
 \[
\int_{2t_{1}}^{t}\dot{s}(\tau)\abs{v_{x}(s(\tau),\tau)}^{2}\rmd\tau
+\int_{0}^{s(t)} \abs{v_{x}(x,t)}^{2}\rmd x
\]
\[
+  \int_{2t_{1}}^{t} \norm{ v_{x}(\cdot,\tau)}^{2}_{H^{\frac{1+\al}{2}}(0,s(\tau))}\rmd\tau
 \leq c(\al,t_{1},M,T,\norm{h}_{W^{1,\infty}(0,T)}) \m{ for every } t > 2t_{1}.
\]
The last estimate finishes the proof.
\end{proof}

\subsection{Existence of a solution to FBP with $b=0$.}


\begin{theo}\label{bzerofinal} Let us assume that $h$ satisfies (\ref{zalh}). Then, the space-fractional Stefan problem \eqref{IFSSP-bis} admits a limit solution $(u,s)$  such that  $u \in C(Q_{s,T})$, $u \geq 0$ in $Q_{s,T}$, $s\in C^{0,1}[0,T]$, $0 \leq \dot{s} \leq \norm{h}_{L^{\infty}(0,T)}$ and
\eqq{
\norm{u(\cdot,t)}_{L^{2}(0,s(t))} \in L^{\infty}(0,T), \hd  \norm{u(\cdot,t)}_{H^{\frac{1+\al}{2}}(0,s(t))} \in L^{2}(0,T),
}{urj}
\eqq{
\norm{u_{t}(\cdot,t)}_{H^{\frac{1-\al}{2}}(0,s(t))},\hd  \norm{\poch \da u(\cdot,t)}_{H^{\frac{1-\al}{2}}(0,s(t))} \in L^{2}_{loc}(0,T].
}{urd}
\end{theo}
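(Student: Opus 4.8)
The plan is to construct $(u,s)$ as a limit of the classical solutions $(u^m,s^m)$ to the $b>0$ problems \eqref{FSSP-al-N-2} with $b=1/m$ and zero initial datum, and to read off all the stated properties from the $b$-independent energy estimates of Lemma~\ref{estfirst-Lemma} and Lemma~\ref{szac2-lemma}. First I would fix, for each $m\in\bbN$, $b_m=1/m$ and $\uz^m\equiv 0$ on $[0,1/m]$; this trivially meets the hypotheses of Theorem~\ref{existance} and of Proposition~\ref{daboundlem} (with $M=\|h\|_{L^{\infty}(0,T)}$), so Theorem~\ref{finala} yields a unique solution $(u^m,s^m)$ in the sense of Definition~\ref{sol-b>0}, with $u^m\geq 0$, $0\leq\dot s^m\leq M$ a.e., $s^m(0)=1/m$, the pointwise bound $0\leq u^m(x,t)\leq\frac{M}{\Gamma(1+\al)}(s^m(t)^{\al}-x^{\al})$ from \eqref{estu}, and the interior regularity $u^m_x(\cdot,t)\in H^{\al+\beta}(\ve,\omega)$. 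By the monotone dependence (Theorem~\ref{desigfronteraN}) the $s^m$ are nonincreasing in $m$; being also uniformly bounded ($1/m\leq s^m(t)\leq 1/m+Mt$) and uniformly $M$-Lipschitz, they decrease uniformly on $[0,T]$ to a nondecreasing $s\in C^{0,1}[0,T]$ with $0\leq\dot s\leq M=\|h\|_{L^{\infty}(0,T)}$ a.e. and $s(0)=0$; in particular $s$ obeys \eqref{zals} and $(\ref{IFSSP-bis}\text{-}iii)$, and $s^m(t)\geq s(t)$, so $(0,s(t))\subseteq(0,s^m(t))$ for all $m,t$.

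Next I would record the uniform-in-$m$ bounds. Since $\uz^m=0$, Lemma~\ref{estfirst-Lemma} bounds $\sup_t\norm{u^m(\cdot,t)}^2_{L^2(0,s^m(t))}$ and $\int_0^T\norm{u^m(\cdot,t)}^2_{H^{(1+\al)/2}(0,s^m(t))}\rmd t$ by $c(\al,M,T)\norm{h}^2_{L^2(0,T)}$. Writing $u^m=v^m+u^m_{sin}$ as in \eqref{urep}, Lemma~\ref{szac2-lemma} bounds, for each fixed $t_1\in(0,T)$, the quantity $\int_{2t_1}^T\dot s^m(\tau)\,\abs{v^m_x(s^m(\tau),\tau)}^2\rmd\tau+\sup_{t>2t_1}\norm{v^m_x(\cdot,t)}^2_{L^2(0,s^m(t))}+\int_{2t_1}^T\norm{v^m_x(\cdot,t)}^2_{H^{(1+\al)/2}(0,s^m(t))}\rmd t$ by a constant depending only on $\al,t_1,T,M,\norm{h}_{W^{1,\infty}(0,T)}$. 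Because $v^m_x(\cdot,t)\in{}_0H^{\al+\bar\gamma}(0,1)\subseteq{}_0H^{(1+\al)/2}$ for an admissible $\bar\gamma\in(\tfrac{1-\al}{2},\tfrac12)$ in \eqref{vxb}, Proposition~\ref{eq} and Corollary~\ref{eqc} turn this into a uniform bound for $\poch\da v^m=\p^{\al}v^m_x$ in $L^2(2t_1,T;H^{(1-\al)/2})$; moreover $\da u^m_{sin}$ is constant (equal to the prescribed boundary value) in $x$ near $0$, so $\poch\da u^m_{sin}=0$ there, and the remaining, explicit part of $\poch\da u^m_{sin}$ is controlled on $(2t_1,T)$, hence $u^m_t=\poch\da u^m$ is bounded in $L^2(2t_1,T;H^{(1-\al)/2})$ uniformly in $m$. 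Together with the uniform bound $0\leq u^m\leq\frac{M}{\Gamma(1+\al)}(1+MT)^{\al}$, these are the inputs for compactness and for \eqref{urj}--\eqref{urd}.

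Then I would pass to the limit. After the change of variables $p=x/s^m(t)$ onto the fixed cylinder $(0,1)\times(0,T)$ (legitimate on each strip $[a,T]$ with $s(a)>0$, and noting $Q_{s,T}\cap\{t<a\}=\emptyset$ whenever $s(a)=0$), a standard Aubin--Lions--Simon argument together with a diagonal extraction over $a\downarrow 0$ and an exhaustion of $Q_{s,T}$ by compact sets produces a subsequence along which $u^m\to u$ in $L^2_{loc}(Q_{s,T})$, a.e., and in fact locally uniformly (using the bound on $v^m$ in $L^\infty(2t_1,T;H^1_{loc})$, the bound on $v^m_t$ in $L^2(2t_1,T;H^{(1-\al)/2}_{loc})$ coming from \eqref{vfmeq}, and Sobolev embedding); thus $u\in C(Q_{s,T})$ and $u\geq 0$. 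Since $\poch\da$ acts linearly and boundedly between the relevant Sobolev scales, $u^m_t=\poch\da u^m\rightharpoonup u_t=\poch\da u$, so $(\ref{IFSSP-bis}\text{-}i)$ holds in $L^2(0,s(t))$ for a.e.\ $t$; passing to the limit in the pointwise bound gives $0\leq u(x,t)\leq\frac{M}{\Gamma(1+\al)}(s(t)^{\al}-x^{\al})$, hence $\lim_{x\to s(t)^-}u(x,t)=0$ on $(0,T]$, which is $(\ref{IFSSP-bis}\text{-}iv)$; and near $x=0$, $u^m=v^m+u^m_{sin}$ with $\da v^m(0^+,t)=0$ (as $v^m_x(\cdot,t)\in{}_0H^{\al}$) and $\da u^m_{sin}(0^+,t)$ the prescribed value, so the same structure survives in the limit and yields $(\ref{IFSSP-bis}\text{-}ii)$ a.e.

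Finally, for the Stefan condition I would invoke Theorem~\ref{condicionintegralNC}: each $(u^m,s^m)$ satisfies $s^m(t)=1/m+\int_0^t h(\tau)\rmd\tau-\int_0^{s^m(t)}u^m(x,t)\rmd x$, and letting $m\to\infty$ (using $s^m\to s$, the $L^\infty$-domination and a.e.\ convergence of $u^m$) yields $s(t)=\int_0^t h(\tau)\rmd\tau-\int_0^{s(t)}u(x,t)\rmd x$ on $(0,T)$; differentiating this identity and substituting $(\ref{IFSSP-bis}\text{-}i)$ and $u(s(t),t)=0$ --- exactly the converse computation in the proof of Theorem~\ref{condicionintegralNC} --- gives $\dot s(t)=-\da u(s(t),t)$ a.e., the boundary limit existing because $v_x(\cdot,t)\in H^{(1+\al)/2}(0,s(t))\hookrightarrow C[0,s(t)]$ for a.e.\ $t>0$. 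The estimates \eqref{urj} and \eqref{urd} then follow from the uniform bounds of the second paragraph by weak(-$\ast$) lower semicontinuity of norms and Fatou's lemma. The main difficulty is precisely this limit passage under domains $Q_{s^m,T}$ that collapse to a point as $t\to 0$: one must ensure the $b$-independent energy estimates are strong enough to give compactness of $u^m$ both up to the free boundary and near the singular endpoint $x=0$, and that the free-boundary relation survives the limit --- the decomposition $u=v+u_{sin}$ and the integral form of the Stefan condition are the tools that make this go through.
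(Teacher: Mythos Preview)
Your proposal is correct and follows essentially the same architecture as the paper: approximate by the $b=1/m$ problems with zero initial datum, extract $s$ via Arzel\`a--Ascoli and the monotonicity of Theorem~\ref{desigfronteraN}, use Lemmas~\ref{estfirst-Lemma} and~\ref{szac2-lemma} for $b$-independent bounds, apply Aubin--Lions for compactness, and recover the Stefan condition from the integral identity of Theorem~\ref{condicionintegralNC}.

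The one genuine technical difference is how the variable domains are handled in the compactness step. You propose the substitution $p=x/s^m(t)$ to work on the fixed cylinder $(0,1)\times(a,T)$, with a diagonal extraction over $a\downarrow 0$. The paper instead keeps the physical variable and \emph{extends} each $v^m$ to a fixed interval: a zero extension $V^m$ and an odd reflection $\bar V^m$ across $x=s^m(t)$. The odd extension preserves the $H^{(1+\al)/2}$ bound on $v^m_x$ (since $v^m(s^m(t),t)=0$), so the estimate of Lemma~\ref{szac2-lemma} transfers directly to the fixed domain without any rescaling of norms; Aubin--Lions is then applied to $\bar U^m=\bar V^m+U^m_{sin}$ on $(0,s(T))\times(t_1,T)$. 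Your route requires transporting the energy estimates through the scaling $x\mapsto x/s^m(t)$, which is harmless on each strip $[a,T]$ because $s^m(t)\geq s(t)\geq s(a)>0$ there, but it is an extra bookkeeping step the paper avoids. Conversely, your derivation of $u(s(t),t)=0$ directly from the barrier \eqref{estu} is a small simplification over the paper's uniform-convergence argument, and your remark that $v_x(\cdot,t)\in H^{(1+\al)/2}\hookrightarrow C$ makes the existence of $\da u(s(t)^-,t)$ explicit, which the paper leaves implicit when invoking the converse direction of Theorem~\ref{condicionintegralNC}.
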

\begin{proof}

The proof of the theorem will be given by approximation.
Let us discuss the family of fractional Stefan problems \eqref{Stefan} with data $\left(\frac{1}{m},0,h \right)$ for every $m\in \bbN$. Let us denote by $(u^{m},s^{m})$ the family of solutions to these problems given by Theorem \ref{finala}. 
Note that, from Theorem \ref{finala} we have that $s^{m}$ are Lipschitz continuous and $0\leq \dot{s}^m(t)\leq M:=\norm{h}_{L^{\infty}(0,T)}$ for every $m\in \bbN$, which implies that 
\begin{equation}\label{bounds}
s^m(t)=\frac{1}{m}+\int_0^t\dot{s}^m(\tau)\rmd \tau\leq 1+MT
\end{equation}
and 
\begin{equation}\label{equicont-s}
|s^m(t)-s^m(\tau)|\leq M|t-\tau|. 
\end{equation}

Hence, we may deduce that the sequence $\left\{s^m\right\}_{m\in\bbN}$ is equicontinuous and uniformly bounded. Using the Ascoli-Arzelà theorem and the closedness of set $\Sigma$ defined in (\ref{Sigma}), we obtain that on the subsequence $s^{m}\rightarrow s$ uniformly on $[0,T]$, where $s \in C^{0,1}(0,T)$ is such that $0\leq \dot{s}(t)\leq M$, a.e. in $(0,T)$. 
Since, from Theorem \ref{desigfronteraN} we deduce that $\left\{s^m\right\}_{m\in\bbN}$ is a decreasing sequence, it holds that $s^m\geq s$ on $[0,T]$.
Our next aim is to investigate the convergence of the sequence $u^{m}$. We will prove that on the subsequence $u^{m}$ converges to $u$ which is a solution to the moving boundary problem  

\begin{equation}\label{Stefan0}
  \begin{array}{rll}
(i) & u_{t} - \poch \da u = 0 & \textrm{ in } Q_{s,T}, \\
(ii) & \da u(0,t) =h(t) & \textrm{ for  } t \in (0,T),\\
(iii) & u(t,s(t)) = 0 & \textrm{ for  } t \in (0,T), \\
\end{array}  \end{equation}
where $s$ is the limit function obtained above. We will obtain this result applying energy estimates and weak compactness argument. At first we recall that, for every $m\in \bbN$, the pair $(u^{m},s^{m})$ satisfies 
\begin{equation}\label{Stefanm}
  \begin{array}{rll}
(i) &  u^{m}_{t} - \poch \da u^{m} = 0 & \textrm{ in }  Q_{s,T}, \\
(ii) & \da u^{m}(0,t) =h(t), &\textrm{ for  } t \in (0,T)\\
(iii) & u^{m}(t,s^{m}(t)) = 0 & \textrm{ for  } t \in (0,T), \\
(iv) & u^{m}(x,0) =0 & \textrm{ for  } x \in (0,\frac{1}{m}). \\
\end{array} \end{equation}


In order to pass to the limit we extend $u_{sin}^{m}$ by zero on $\{(x,t):0\leq t<T, x \in (\sm(t),\sm(T))\}$ and we denote this extension by $U^{m}_{sin}$. We define two extensions of $v^{m}$. We denote by $V^m$ the extension by zero on $\{(x,t):0\leq t<T, x \in (\sm(t),\sm(T))\}$. Then we define $\bar{V}^m$ as an odd extension, i.e.
\[
\bar{V}^m(x,t) =
 \left\{ \begin{array}{ll}
v^{m}(x,t) & \textrm{ for }x \in [0,s^{m}(t)], \\
-v^{m}(2k\sm(t)-x,t) & \textrm{ for }x \in ((2k-1) \sm(t),2k s^{m}(t)] \cap [0,\sm(T)], \\
v^{m}(x-2k\sm(t),t) & \textrm{ for }x \in (2k \sm(t),(2k+1)s^{m}(t)]\cap [0,\sm(T)], \\
\end{array} \right.
\]
for $k = 1,2,\dots$
Let us denote $U^{m}:=V^{m} + U^{m}_{sin}$ and  $\bar{U}^{m}:=\bar{V}^{m} + U^{m}_{sin}$.
Then, from (\ref{estfirst}), we obtain that on the subsequence
\eqq{
U^{m} \rightharpoonup u \m{ in } L^{2}(0,T;H^{\frac{1+\al}{2}}(0,s(T))) \m{ and } U^{m} \overset{*}{\rightharpoonup} u \m{ in } L^{\infty}(0,T;L^{2}(0,s(T))).
}{zb1}
Let us fix $t_{1} > 0$. Then, for $t \geq t_{1}$ we have $s^m(t) \geq s(t) \geq s(t_{1})$. This together with (\ref{szac2}) leads to
\[
\sup_{t \in (t_{1},T)} \int_{0}^{s(T)}\abs{\bar{V}_{x}^m(x,t)}^{2}dx + \int_{t_{1}}^{t} \norm{\bar{V}_{x}^m(\cdot,t)}^{2}_{H^{\frac{1+\al}{2}}(0,s(T))}d\tau \leq c(t_{1},\al,M,T,\norm{h}_{W^{1,\infty}(0,T)}),
\]
hence
\[
\{\bar{V}^{m}\} \m{ is bounded in } L^{\infty}(t_{1},T;H^{1}(0,s(T))) \m{ and }
\{\bar{V}^{m}_{x} \} \m{ is bounded in } L^{2}(t_{1},T;{}_{0}H^{\frac{1+\al}{2}}(0,s(T))).
\]
From the last one we infer that
\[
\{\poch \da \bar{V}^{m}\} \m{ is bounded in } L^{2}(t_{1},T;H^{\frac{1-\al}{2}}(0,s(T))).
\]
We note that $U^{m}_{sin}$ converges uniformly to the function $U_{sin}$ given by
\[
U_{sin} =
 \left\{ \begin{array}{ll}
h(t)\frac{x^{\al}}{\Gamma(1+\al)} & \textrm{ in } \{(x,t):0<x\leq \frac{1}{2}s(t), 0<t<T\}, \\
\bar{\varphi} & \textrm{ in } \{(x,t):\frac{1}{2}s(t)<x\leq\frac{3}{4}s(t), 0<t<T\}, \\
0 &  \textrm{ in } \{(x,t):\frac{3}{4}s(t)<x\leq s(T), 0<t<T\}\\
\end{array} \right.
\]
where $\bar{\vf}$ is smooth function such that $ \bar{\vf}(\frac{1}{2}s(t),t) = h(t)\frac{(s(t))^{\al}}{2^{\al}\Gamma(1+\al)}$ and $ \bar{\vf}(\frac{3}{4}s(t),t) = 0$. Furthermore, by estimate (\ref{nod}) $\poch \da U^{m}_{sin}$ is bounded in $L^{2}(t_{1},T;H^{\frac{1-\al}{2}}(0,s(T)))$. Thus, we obtain
\eqq{
\{\poch \da \bar{U}^{m}\} \m{ is bounded in } L^{2}(t_{1},T;H^{\frac{1-\al}{2}}(0,s(T))).
}{zb3}
Since $u^{m}_{t} = \poch \da \um$ in $Q_{s,T}$ we obtain on the subsequence
\[
\bar{U}^{m}_{t}\rightharpoonup \chi  \m{ in } L^{2}(t_{1},T;H^{\frac{1-\al}{2}}(0,s(T))).
\]
Due to $\bar{U}^{m} = U^{m}$ on $Q_{s,T}$, choosing test functions in a definition of weak limit with a support in $Q_{s,T}$ and making use of (\ref{zb1}) we arrive at $\chi = u_{t}$ in a weak sense in $Q_{s,T}$.
On the other hand, from (\ref{zb1}) we obtain that $U^{m}_{x}\rightharpoonup u_{x}$ in $L^{2}(0,T;H^{\frac{\al-1}{2}}(0,s(T)))$ and by the continuity of fractional integral we obtain that $\da U^{m} = I^{1-\al}U^{m}_{x} \rightharpoonup \da u$ in $L^{2}(0,T;H^{\frac{1-\al}{2}}(0,s(T)))$.
Furthermore, using (\ref{zb3}) we get
\[
\poch \da \bar{U}^{m}\rightharpoonup \chi_{1} \m{ in } L^{2}(t_{1},T;H^{\frac{1-\al}{2}}(0,s(T)))
\]
on the subsequence. Again thanks to $\bar{U}^{m} = U^{m}$ on $Q_{s,T}$ we obtain that $\chi_{1} = \poch \da u$ in a weak sense in $Q_{s,T}$.
Now we will examine whether the limit function $u$ satisfies boundary conditions. At first we will show that one may choose a subsequence such that there exists $\chi_{2}$ such that
\eqq{
\bar{U}^{m}\rightarrow \chi_{2} \hd \m{uniformly on} \hd [t_{1},T]\times [0,s(T)].
}{unico}
We note that for every $\beta < \al+\frac{1}{2}$ we have $x^{\al} \in H^{\beta}(0,1)$. Thus  $U^{m}_{sin} \in C([0,T];H^{\beta}(0,s(T)))$ for every $\beta < \al+\frac{1}{2}$ and for every $t > t_{1}$ it converges to $U_{sin}$ in $H^{\beta}(0,s(T)))$ uniformly with respect to $t>t_{1}$.
Thus for every $\beta < \al+\frac{1}{2}$ we have that $\bar{U}^{m}$ is bounded in  $L^{\infty}(t_{1},T;H^{\beta}(0,s(T)))$. Since we have already shown that $\{\bar{U}^{m}_{t}\}$ is bounded in  $L^{2}(t_{1},T;H^{\frac{1-\al}{2}}(0,s(T)))$, it follows by the Aubin-Lions theorem that one may choose a subsequence such that $\bar{U}^{m} \rightarrow \chi_2$ in $C([t_{1},T];H^{\gamma}(0,s(T)))$ for every $\gamma \in (0,\al+\frac{1}{2})$. Thus by the Sobolev embedding we arrive at (\ref{unico}). Applying $\bar{U}^{m} = U^{m}$ on $Q_{s,T}$ by the uniqueness of the limit we obtain that $\chi_{2} \equiv u$ on $Q_{s,T}$. Using the fact that $\sm \rightarrow s$ uniformly on $[0,T]$ we arrive at 
\[
u(s(t),t) = 0 \m{ for every } t \in (t_{1},T).
\]
Finally, since  $\bar{V}^{m}_{t}$ is bounded in $L^{2}((t_{1},T);H^{\frac{1-\al}{2}}(0,s(T)))$ and $\bar{V}^{m}$ is bounded in \linebreak $L^{2}((t_{1},T);H^{\frac{3+\al}{2}}(0,s(T)))$, applying the Aubin-Lions theorem we obtain that there exists $v$ such that on the subsequence $\bar{V}^{m} \rightarrow v$ in $L^{2}((t_{1},T);H^{1+\al}(0,s(T)))$. The last one implies that $\da \bar{V}^{m}\rightarrow \da v$ in $L^{2}((t_{1},T);H^{1}(0,s(T)))$ and thus $(\da v)(0,t) = \da \bar{V}^{m}(0,t) = 0$ for a.a $t \in (t_{1},T)$, which leads to $\da u(0,t) = \da U_{sin}(0,t) = h(t)$ for a.a $t \in (t_{1},T)$.
Since $t_{1} > 0$ was chosen arbitrary, we obtain that there exists $u \in C(Q_{s,T})$, satisfying (\ref{urj}) and (\ref{urd}), such that
\[
u_{t} - \poch \da u = 0  \m{ in } H^{\frac{1-\al}{2}}(0,s(t)) \hd \m{a.e in} \hd Q_{s,T}, \hd \hd u(s(t),t) = 0 \m{ for every } t \in (0,T]
\]
and for a.a $t \in (0,T)$
\[
(\da u)(0,t) = h(t).
\]
Since for every $m \in \mathbb{N}$ we have $u^{m} \geq 0$, from (\ref{unico}) we infer that $u \geq 0$ on $Q_{s,T}$.
Finally, to prove that the Stefan condition is satisfied, we note that in view of Theorem \ref{condicionintegralNC} the Stefan condition may be equivalently written in the integral form
\begin{equation}\label{CondInt1Nm}
s^m(t)=\frac{1}{m}+\int_0^t h(\tau) \rmd \tau -\int_0^{s^m(t)}u^m(x,t)\rmd x.
\end{equation}
For every $t>0$ we consider the compact set ${t}\times [0,s^1(t)]$ and taking the limit when $m\to \infty$ and  using \eqref{unico} we deduce that 
\begin{equation}\label{CondInt1N0}
s(t)=\int_0^t h(\tau) \rmd \tau -\int_0^{s(t)}u(x,t)\rmd x,
\end{equation}
which proves that the fractional Stefan condition $(\ref{IFSSP}-v)$ holds in $(0,T)$. This finishes the proof of Theorem~\ref{bzerofinal}.
\end{proof}

\begin{note} Note that there may be solutions for a boundary condition $h$ such that  $\dot{h}$ is not bounded, as it was proved in \cite{RoTaVe:2020}, where the explicit solution to problem \eqref{IFSSP} for $h(t)=h_0 t^{-\frac{\al}{1+\al}}$ is  given by 
\begin{equation}\label{solFSP-N-new}
\begin{split}
v_\al(x,t)=\frac{h_0}{\Gamma(\alpha)} \int_{x/t^{1/(1+\al)}}^{\eta_\al} w^{\alpha-1}E_{\alpha,1+\frac{1}{\alpha},1}\left(-\frac{w^{1+\alpha}}{1+\alpha}\right)\rmd w\\
\end{split}
\end{equation}
\begin{equation}\label{freebP2}
s_\al(t)=\eta_\al t^{\frac{1}{1+\alpha}},\quad t\, \in \, (0,T).
\end{equation}
where $E_{\al,1+1/\al,1}$ is the three-parametric Mittag-Lefler function and  $\eta_\al\in \bbR^+$ is the unique solution to the equation
$$H_\al(x) = x, \quad x > 0,$$
and the function $H_\al$ is defined by \begin{equation}\label{fcH2}
H_\al(x)=h_0\left((1+\alpha)-\frac{1}{\Gamma(\alpha)}\int_0^x w\sigma_\al(w)\rmd w\right).
\end{equation}

\end{note}

\section{Conclusions}
We have discussed two kinds of phase-change problems for a one dimensional material. We have considered the model with a non-local heat flux and the fractional Neumann boundary condition at the fixed face $x=0$. We have proven the  existence and uniqueness of a regular solution in the case when the initial domain is occupied by liquid and solid. In the case when the initial domain is at the phase change temperature and it is not possible to transform the domain into a cylinder,  a limit  solution have been obtained by using an approximation method. 
In the proof of main results, many interesting properties for space-fractional Stefan problem with fractional Neumann boundary condition have been established, such as  the extremum principle, an integral condition equivalent to the fractional Stefan condition, a monotonicity result and energy estimates. \\
It is still an open problem the study of the regularity and uniqueness of the limit solution to problem~\eqref{IFSSP}.
 
\section{Acknowledgements}
\noindent The authors are grateful to Prof. Rico Zacher for the idea of reformulation the regularity assumptions in Proposition \ref{extremumPple} and Proposition \ref{nonpositivity}.\\
\noindent The first and third author were partly supported the Project PIP N$^\circ$ 0275 from CONICET--Universidad Austral. The first author was also partly supported by  the projects Austral N$^\circ$ 006-INV00020 (Rosario, Argentina) and European Unions Horizon 2020 research and innovation programme under the Marie Sklodowska-Curie Grant Agreement N$^\circ$ 823731 CONMECH. The second author was partly supported by National Sciences Center, Poland through 2017/26/M/ST1/00700 Grant.
\bibliographystyle{plain}

\bibliography{Roscani_BIBLIO_GENERAL_nombres_largos_2020}

\end{document}